\documentclass[11pt]{amsart}

\usepackage{amsmath,amssymb,amsthm,mathtools}
\usepackage{enumitem,booktabs,microtype}
\usepackage[hidelinks]{hyperref}

\newtheorem{theorem}{Theorem}[section]
\newtheorem{proposition}[theorem]{Proposition}
\newtheorem{lemma}[theorem]{Lemma}
\newtheorem{corollary}[theorem]{Corollary}
\theoremstyle{definition}
\newtheorem{definition}[theorem]{Definition}
\theoremstyle{remark}
\newtheorem{remark}[theorem]{Remark}
\newtheorem{example}[theorem]{Example}
\newtheorem{problem}[theorem]{Problem}

\newcommand{\R}{\mathbb R}
\newcommand{\cH}{\mathcal H}
\newcommand{\supp}{\operatorname{supp}}

\newcommand{\dd}{\,\mathrm d}
\newcommand{\asympC}{\mathrel{\asymp}}

\title[Completion Fibres of Newton Conformal Metrics]{Completion Fibres of Infinite Positive Newton Conformal Metrics Near Corners}
\author{Muhamad Fahmi bin Zanal Abidin}
\address{Independent Researcher, Kuala Lumpur, Malaysia}
\email{fahmiemail123@gmail.com}
\thanks{ORCID: 0009-0009-1024-9858}
\date{}

\begin{document}
\begin{abstract}
We study the local metric completion of positive singular conformal metrics near a codimension-two corner when the conformal factor is generated by a compact positive measure on exponent space. The Newton support function determines noncritical weighted-ray accessibility, but at the critical power order support geometry alone is insufficient. We identify the missing invariant as a Laplace transform measuring how exponent mass approaches the critical total-degree face. If the maximal total degree exceeds \(2\), the local completion fibre over the corner is empty. When all total degrees are at most \(2\), the fibre is empty or a singleton according as the square root of the critical Laplace profile is nonintegrable or integrable at infinity. We give support-identical measures with opposite critical completion behaviour, sharpness examples, monotone truncation phenomena, intrinsic Lipschitz completion coordinates, and a bi-Lipschitz transfer principle for Newton-comparable densities.
\end{abstract}
\keywords{Manifolds with corners; singular Riemannian metrics; Newton support; generalized power series; Laplace transform; metric completion.}
\subjclass[2020]{53C23; 53C21; 44A10; 14M25; 51F30.}
\maketitle

\section{Introduction}

Newton polyhedra and related support constructions have long been used to
organize competing monomial orders in singularity theory and asymptotic
analysis.  Toric resolution associated with Newton polyhedra is classical
\cite{Khovanskii1984}.  More recent work develops local monomialization,
combinatorial monomialization, and stratified resolution for generalized
analytic functions whose monomials may have nonnegative real exponents
\cite{MartinRolinSanz2013,Palma2022,MolinaPalmaSanz2024,Palma2024}.
Accordingly, neither generalized real-exponent series nor non-rational
Newton polyhedra are new objects.

The purpose of the present paper is different.  We study the intrinsic
length geometry of positive singular conformal metrics generated by an
infinite distribution of inverse monomials near a codimension-two corner.
Related precursor work treats product and sum multi-weighted conformal
metrics on manifolds with corners and their metric completions
\cite{ZanalAbidinPaper1}.  The finite positive Newton setting, including the
projective support-function criterion and induced boundary geometry, is
developed in \cite{ZanalAbidinPaper3}.  For self-containment, the finite
comparison needed here is also proved directly.  In that setting a weighted
direction \(w=(p,q)\) is accessible precisely when the finite support
function satisfies
\[
\cH(w)<2\min\{p,q\}.
\]
The present paper passes from finite support to a compact positive exponent
measure.  For infinite support, the support function continues to determine
the leading logarithmic order, but it no longer determines the critical
metric behaviour.  This
is a fixed-measure instance of the classical Laplace principle
\cite{Varadhan1966}; related log-Laplace transforms are also standard in
convex geometry \cite{KlartagMilman2012}.  It is therefore used here as
background rather than claimed as a new asymptotic theorem.  What fails in
the infinite-support setting is the stronger two-sided monomial comparison
at critical directions.

The central new feature is an exposed-face tail.  Along
\[
x=r^p,\qquad y=r^q,\qquad s=\log(1/r),
\]
the conformal factor admits an exact factorization
\[
F_\mu(r^p,r^q)
=
r^{-\cH_\mu(p,q)}\Lambda_{\mu,(p,q)}(s),
\]
where \(\Lambda_{\mu,(p,q)}\) is a Laplace transform of the mass-depth
distribution relative to the exposed Newton face.  If the exposed face has
positive mass, the finite-support power law is recovered.  If the exposed
face has zero mass, the Laplace factor tends to zero and may alter the
metric behavior at the critical exponent.

The measure formulation is adopted as a unified ambient language rather
than as a claim of necessity.  Countably atomic measures already exhibit
the new critical-tail phenomenon.  General finite Radon measures are
included because they provide a coordinate-free closure of the atomic
class, separate support geometry from coefficient mass, and allow discrete,
mixed, and diffuse exponent distributions to be treated by the same
notation.  The main metric arguments will first be proved in the atomic
setting whenever this yields a clearer statement, and then extended to the
measure setting by direct integral estimates.

\paragraph{Relation to conformal-density boundary theory.}
General conformal-density theory already studies accessibility and the metric
boundary of a conformally deformed domain.  Bonk and Koskela
\cite{BonkKoskela2002} obtain lower bounds for the Hausdorff dimension of the
metric boundary of conformal deformations of the Euclidean ball and derive
consequences for quasiconformal images.  Their principal results concern the
size of the completed boundary under Harnack, volume-growth, and
Gehring--Hayman-type hypotheses; they do not give a pointwise accessibility
criterion in terms of an exponent measure, nor do they classify the completion
fibre over a fixed Euclidean boundary point.

Nieminen \cite{Nieminen2009} studies broadly accessible boundary points for
conformal metrics on the Euclidean ball.  His theorems extend a result of
Gerasch by estimating, under Harnack and isodiametric-profile assumptions, the
size of the exceptional set of boundary points that fail broad accessibility.
This is close in geometric motivation to the present work, but logically
different: the conclusions are Hausdorff-size statements for sets of boundary
points, whereas the present paper gives an exact local criterion for one
corner point and identifies its local metric-completion fibre.  Neither paper
contains the Newton-support reduction, the critical exposed-face Laplace tail,
or the empty-or-singleton fibre alternative proved below.

Kl\'en and Suomala \cite{KlenSuomala2013} study Hausdorff and packing
dimensions of metric boundaries induced by continuous densities on Euclidean
domains.  Their framework explicitly compares the original and density-metric
completions and, in radial and power-law regimes, derives boundary-dimension
formulas from asymptotic density exponents.  This is the closest adjacent
result in terms of converting an integral density profile into boundary
geometry.  Their principal object is nevertheless the dimension of the whole
metric boundary, whereas the present work treats a prescribed corner point,
allows a compact distribution of competing monomial exponents, and classifies
the corresponding local completion fibre.

\paragraph{Relation to resolution and singular-metric geometry.}
Generalized boundary blow-up on manifolds with corners, as developed by
Kottke and Melrose \cite{KottkeMelrose2015}, resolves competing homogeneities
through refinements of monoidal boundary data.  Grandjean
\cite{Grandjean2019} proves monomialization results for real-analytic singular
metrics on surfaces, while Grandjean and Grieser \cite{GrandjeanGrieser2018}
study geodesics and the exponential map for cuspidal metrics.  Conformal
Grushin spaces provide another nearby model in which a prescribed power-law
density changes the metric geometry of a singular set \cite{Romney2016}.
Soares J\'unior, Costa, and Saia \cite{SoaresCostaSaia2026} give a recent
example in singularity theory where Newton filtration is used to define a
control function and a singular metric for bi-Lipschitz and differentiable
\(\mathcal R\)-sufficiency of jets.  This confirms that combining Newton
data with singular metric structures has modern precedent, but their problem
is jet sufficiency rather than intrinsic path distance, accessibility, or
metric completion.  These works supply resolution-theoretic, geodesic, and
model-metric context, but they do not formulate the conformal factor through
a compact exponent measure or derive the total-degree support-and-tail
criterion used here to classify a local completion fibre.

\paragraph{Scope and novelty.}
The paper does not claim a new theory of generalized power series, Newton
polyhedra, resolution, or conformal boundaries.  Its contribution is a direction-uniform local completion theorem for
densities generated by a compact positive exponent measure near a prescribed
codimension-two corner.  Support geometry controls the noncritical regimes,
while the distribution of mass near the critical total-degree face enters
through a sharp Laplace-tail integral.

\subsection*{Main results}
Set
\[
A_\mu=\max_{\alpha\in\operatorname{supp}\mu}(\alpha_1+\alpha_2).
\]
Theorem~\ref{thm:complete-local-completion-classification} proves the
following dichotomy for the local completion fibre over a prescribed corner
point:
\begin{enumerate}[label=(\roman*)]
\item if \(A_\mu>2\), the fibre is empty;
\item if \(A_\mu\le2\), define
\[
\mathcal T_\mu(S)=\int_E e^{-S(2-\alpha_1-\alpha_2)}\,\mathrm d\mu(\alpha).
\]
The fibre is empty when
\(\int^\infty\sqrt{\mathcal T_\mu(S)}\,\mathrm dS=\infty\), and is a
singleton when this integral is finite.
\end{enumerate}
The proof also shows that, in the non-supercritical regime, arbitrary
finite-length accessibility, diagonal accessibility, tail integrability, and
singleton-fibre completion are equivalent.  Theorem~\ref{thm:support-insufficiency}
shows that the closed Newton support and its support function do not determine
the critical outcome: identical support geometry can carry different mass
tails and hence opposite completion behaviour.

\paragraph{Organization.}
Section~\ref{sec:model} introduces the admissible class.  Section~\ref{sec:directional}
defines the support function, exposed faces, and directional Laplace profiles.
Sections~\ref{sec:strict-gap} and~\ref{sec:critical} establish the weighted-ray
classification.  Section~\ref{sec:global} separates radial from unrestricted
accessibility.  Section~\ref{sec:metric-completion} proves the local
metric-completion classification.  Section~\ref{sec:truncations} treats finite
truncations, Section~\ref{sec:classification} identifies the directional data,
the final sections discuss classification data, sharpness, scope, and Newton-comparable densities.

\section{Admissible infinite Newton interactions}
\label{sec:model}

Let \(X\) be a smooth manifold with corners and let
\[
C=H_1\cap H_2
\]
be a codimension-two corner stratum.  In an adapted coordinate
neighborhood, write
\[
U=[0,\varepsilon)^2\times V,\qquad C=\{x=y=0\},
\]
where \(z\in V\subset\R^k\) denotes the tangential variables.  Let \(g_0\)
be a smooth Riemannian metric satisfying
\begin{equation}\label{eq:background-ellipticity}
c_0 g_{\mathrm E}\le g_0\le C_0 g_{\mathrm E}
\end{equation}
on \(U\), for constants \(0<c_0\le C_0<\infty\).

\begin{definition}[Positive compactly supported Newton interaction]
\label{def:measure-interaction}
Let \(E\subset\R_{\ge0}^2\) be compact and let \(\mu\) be a finite, nonzero, positive
Radon measure with \(\supp\mu=E\).  Let
\[
u:E\times U^\circ\longrightarrow(0,\infty)
\]
be jointly measurable.  Assume in addition that the integral in
\eqref{eq:measure-model} defines a continuous positive function on
\(U^\circ\) (for example, this follows if \(u\) is continuous and locally
uniformly dominated in the interior), and assume that
\begin{equation}\label{eq:coefficient-bounds}
0<c_u\le u(\alpha;x,y,z)\le C_u<\infty
\end{equation}
for \(\mu\)-almost every \(\alpha\in E\) and every
\((x,y,z)\in U^\circ\).  The associated positive Newton interaction is
\begin{equation}\label{eq:measure-model}
F_\mu(x,y,z)
=
\int_E
u(\alpha;x,y,z)x^{-\alpha_1}y^{-\alpha_2}\,\dd\mu(\alpha),
\end{equation}
and the associated singular conformal metric is
\begin{equation}\label{eq:metric-model}
g_\mu=F_\mu g_0.
\end{equation}
\end{definition}

The frozen-coefficient model is
\begin{equation}\label{eq:frozen-model}
F_\mu^0(x,y)
=
\int_E x^{-\alpha_1}y^{-\alpha_2}\,\dd\mu(\alpha).
\end{equation}
By \eqref{eq:coefficient-bounds},
\begin{equation}\label{eq:frozen-comparison}
c_u F_\mu^0\le F_\mu\le C_u F_\mu^0.
\end{equation}
Consequently, all length estimates that are stable under uniform
bi-Lipschitz comparison may first be proved for \(F_\mu^0\).

\begin{proposition}[Interior finiteness]
\label{prop:interior-finiteness}
For every \((x,y)\in(0,\varepsilon)^2\), the quantity
\(F_\mu^0(x,y)\) is finite and strictly positive.  Hence \(g_\mu\) is a
well-defined Riemannian metric on \(U^\circ\).
\end{proposition}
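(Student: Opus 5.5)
The plan is to bound the integrand in \eqref{eq:frozen-model} above and below by constants depending only on the fixed point \((x,y)\). Fix \((x,y)\in(0,\varepsilon)^2\). Because \(E\subset\R_{\ge0}^2\) is compact, \(M=\max_{\alpha\in E}\max\{\alpha_1,\alpha_2\}<\infty\). The integrand \(\alpha\mapsto x^{-\alpha_1}y^{-\alpha_2}=\exp(\alpha_1\log(1/x)+\alpha_2\log(1/y))\) is continuous on \(E\), hence Borel measurable, and strictly positive. We may take \(\varepsilon\le1\) after shrinking the chart; if one prefers not to, it is enough to use \(|\log x|\) and \(|\log y|\). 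With this normalization the integrand satisfies
\[
0<\min\{1,x^{M}\}^{-1\cdot 0}\cdot e^{-M(|\log x|+|\log y|)}\le x^{-\alpha_1}y^{-\alpha_2}\le e^{M(|\log x|+|\log y|)}<\infty
\]
for every \(\alpha\in E\), with constants independent of \(\alpha\).

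Integrating these bounds against the finite positive measure \(\mu\) gives
\[
e^{-M(|\log x|+|\log y|)}\,\mu(E)\le F_\mu^0(x,y)\le e^{M(|\log x|+|\log y|)}\,\mu(E).
\]
Since \(\mu\) is nonzero, \(0<\mu(E)<\infty\), and so \(F_\mu^0(x,y)\) is finite and strictly positive. Strict positivity can also be seen directly: the integrand is strictly positive and \(\mu\) is nonzero, so the integral of a positive function over a set of positive measure is positive.

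For the metric, \eqref{eq:frozen-comparison} gives \(0<c_uF_\mu^0\le F_\mu\le C_uF_\mu^0<\infty\) pointwise on \(U^\circ\). Continuity and positivity of \(F_\mu\) on \(U^\circ\) are assumed in Definition~\ref{def:measure-interaction}. Hence \(g_\mu=F_\mu g_0\) is a continuous positive multiple of the smooth Riemannian metric \(g_0\), and therefore a well-defined (continuous) Riemannian metric on \(U^\circ\). Smoothness would require more regularity of \(u\), which the statement does not assert.

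No step is a genuine obstacle. The only points needing care are the measurability of the integrand, which follows from continuity in \(\alpha\), and not conflating the frozen bound with the coefficient bounds, which hold only \(\mu\)-almost everywhere; this does not affect the integral.
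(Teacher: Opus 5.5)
Your proof is correct and takes the same approach as the paper: bound \(x^{-\alpha_1}y^{-\alpha_2}\) uniformly over the compact set \(E\), integrate against the finite measure \(\mu\), and get strict positivity from a positive integrand and \(\mu\neq0\); your symmetric bound \(e^{\pm M(|\log x|+|\log y|)}\) is just a repackaging of the paper's \(\max\{1,x^{-A_0}\}\max\{1,y^{-B_0}\}\), and your passage to \(F_\mu\) via the frozen comparison and the assumed continuity is a harmless addition. The stray factor \(\min\{1,x^{M}\}^{-1\cdot 0}\) in your first display equals \(1\) and should simply be deleted.
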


\begin{proof}
Compactness of \(E\) gives finite numbers
\[
A_0=\max_{\alpha\in E}\alpha_1,\qquad
B_0=\max_{\alpha\in E}\alpha_2.
\]
For \(x,y>0\),
\[
x^{-\alpha_1}y^{-\alpha_2}
\le
\max\{1,x^{-A_0}\}\max\{1,y^{-B_0}\}
\]
for every \(\alpha\in E\).  Since \(\mu(E)<\infty\),
\[
F_\mu^0(x,y)
\le
\mu(E)\max\{1,x^{-A_0}\}\max\{1,y^{-B_0}\}<\infty.
\]
Strict positivity follows from positivity of the integrand and the
assumption that \(\mu\) is nonzero.
\end{proof}

\subsection{Atomic and diffuse formulations}

\begin{definition}[Countably atomic Newton interaction]
\label{def:atomic-interaction}
A positive Newton interaction is countably atomic if
\[
\mu=\sum_{j=1}^{\infty}c_j\delta_{\alpha_j},
\qquad
c_j>0,\qquad
\sum_{j=1}^{\infty}c_j<\infty,
\]
where \(\alpha_j=(a_j,b_j)\) belongs to a fixed compact subset of
\(\R_{\ge0}^2\).
\end{definition}

In this case,
\begin{equation}\label{eq:atomic-series}
F_\mu^0(x,y)
=
\sum_{j=1}^{\infty}
c_jx^{-a_j}y^{-b_j}.
\end{equation}
Finite positive Newton sums are recovered when only finitely many atoms are
present.

\begin{proposition}[Atomic realization]
\label{prop:atomic-realization}
Every countable positive series of the form \eqref{eq:atomic-series}, with
summable positive coefficients and compact exponent closure, is exactly a
positive compactly supported Newton interaction.  Conversely, every
countably atomic interaction has the form \eqref{eq:atomic-series}.
\end{proposition}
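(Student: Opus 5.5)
Both directions are checked directly against Definition~\ref{def:measure-interaction}, using only the identification of the series \eqref{eq:atomic-series} with the integral \eqref{eq:frozen-model} against a discrete measure.

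\emph{Forward direction.} Let a series $\sum_j c_j x^{-a_j}y^{-b_j}$ be given, with $c_j>0$ and $\sum_j c_j<\infty$. Assume the points $\alpha_j=(a_j,b_j)$ lie in $\R_{\ge0}^2$ and the closure $E=\overline{\{\alpha_j\}}$ is compact. Define $\mu=\sum_j c_j\delta_{\alpha_j}$.
\begin{enumerate}[label=(\alph*)]
\item The measure $\mu$ is positive and nonzero, with total mass $\mu(E)=\sum_j c_j<\infty$.
\item Every finite positive Borel measure on $\R^2$ is Radon, since $\R^2$ is Polish.
\item \emph{Support.} Each atom $\alpha_j$ has positive mass, so $\alpha_j\in\supp\mu$. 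The support is closed, hence $E\subseteq\supp\mu$. Conversely, $\mu$ assigns zero mass to the open set $\R^2\setminus E$, so $\supp\mu\subseteq E$. Thus $\supp\mu=E$.
\item \emph{Coefficients.} Take $u\equiv1$. It is jointly measurable and satisfies \eqref{eq:coefficient-bounds} with $c_u=C_u=1$.
\item \emph{Integral.} By definition of integration against a countable sum of weighted Dirac masses, $\int_E x^{-\alpha_1}y^{-\alpha_2}\dd\mu=\sum_j c_jx^{-a_j}y^{-b_j}$. The interchange of sum and integral is justified by monotone convergence, since all terms are nonnegative.
\item \emph{Finiteness and positivity.} Proposition~\ref{prop:interior-finiteness} gives that this quantity is finite and positive.
\item \emph{Continuity on $U^\circ$.} On a compact set $K\subset(0,\varepsilon)^2$, let $\delta=\min(\min_K x,\min_K y)>0$. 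Each term satisfies
\[
c_jx^{-a_j}y^{-b_j}\le c_j\max\{1,\delta^{-A_0}\}\max\{1,\delta^{-B_0}\},
\]
the same bound as in the proof of Proposition~\ref{prop:interior-finiteness}. The Weierstrass M-test then gives locally uniform convergence, so the sum is continuous.
\end{enumerate}
Hence the series is exactly a positive compactly supported Newton interaction.

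\emph{Converse.} A countably atomic interaction in the sense of Definition~\ref{def:atomic-interaction} has $\mu=\sum_j c_j\delta_{\alpha_j}$. Integrating $x^{-\alpha_1}y^{-\alpha_2}$ against $\mu$ term by term, again by monotone convergence, gives exactly \eqref{eq:atomic-series}. The exponents lie in a fixed compact set, so their closure is compact.

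\emph{Subtle point.} The only delicate step is checking that $\supp\mu$ equals the \emph{closure} of the atom set rather than the atom set itself. This is why the hypothesis requires compact exponent closure rather than compactness of $\{\alpha_j\}$. The support argument in step (c) settles it. The remaining steps are routine.
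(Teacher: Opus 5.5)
Your proposal is correct and follows the same route as the paper: set $\mu=\sum_j c_j\delta_{\alpha_j}$, use summability to get $\mu(E)<\infty$, and evaluate the frozen integral \eqref{eq:frozen-model} against this discrete measure, with the converse being the same computation read backwards. You also make explicit several points the paper leaves tacit, all argued correctly: that $\supp\mu$ is the closure of the atom set, the choice $u\equiv1$, monotone convergence for the term-by-term integration, and continuity via the Weierstrass M-test.
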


\begin{proof}
Given the series, define
\[
\mu=\sum_{j=1}^{\infty}c_j\delta_{\alpha_j}.
\]
Summability gives \(\mu(E)<\infty\), and direct evaluation of the integral
in \eqref{eq:frozen-model} gives \eqref{eq:atomic-series}.  The converse is
the same calculation in reverse.
\end{proof}

\begin{remark}[Why the measure model is retained]
\label{rem:why-measures}
The countably atomic class is sufficient to generate nonattained
coefficient tails and critical logarithmic corrections.  Thus diffuse
measures are not required merely to produce the main new phenomenon.
Nevertheless, the measure formulation is retained because:
\begin{enumerate}[label=(\roman*)]
\item it separates exponent support from coefficient mass;
\item it treats finite, countably atomic, mixed, and diffuse distributions
in one class;
\item exposed-face tails are pushforward measures; their Laplace profiles
are therefore canonical;
\item weak limits of atomic models remain within the same ambient class.
\end{enumerate}
No approximation or convergence theorem is asserted here; those require
separate metric hypotheses and will be proved later.
\end{remark}

\section{Directional support and exposed-face tails}
\label{sec:directional}

For \(w=(p,q)\in\R_{>0}^2\), define the support function
\begin{equation}\label{eq:support-function}
\cH_\mu(w)
=
\max_{\alpha\in E}\langle w,\alpha\rangle
=
\max_{\alpha\in E}(p\alpha_1+q\alpha_2).
\end{equation}
Compactness of \(E\) guarantees attainment.  The function \(\cH_\mu\) is
positively homogeneous, convex, and continuous.  These are standard facts
from convex analysis and are not claimed as new results.

\begin{definition}[Exposed Newton face]
\label{def:exposed-face}
The exposed face in direction \(w\) is
\[
E_w
=
\{\alpha\in E:
\langle w,\alpha\rangle=\cH_\mu(w)\}.
\]
\end{definition}

Define the depth map
\begin{equation}\label{eq:depth-map}
\Delta_w(\alpha)
=
\cH_\mu(w)-\langle w,\alpha\rangle\ge0
\end{equation}
and the associated pushforward measure
\begin{equation}\label{eq:depth-measure}
\nu_w=(\Delta_w)_\#\mu.
\end{equation}
Its distribution function near the origin is the directional tail
\begin{equation}\label{eq:tail-function}
T_{\mu,w}(\delta)
=
\nu_w([0,\delta])
=
\mu\{\alpha\in E:\Delta_w(\alpha)\le\delta\}.
\end{equation}

\begin{definition}[Directional Laplace profile]
\label{def:laplace-profile}
For \(s\ge0\), define
\begin{equation}\label{eq:laplace-profile}
\Lambda_{\mu,w}(s)
=
\int_E e^{-s\Delta_w(\alpha)}\,\dd\mu(\alpha)
=
\int_{[0,\infty)}e^{-st}\,\dd\nu_w(t).
\end{equation}
\end{definition}

\begin{proposition}[Exact directional factorization]
\label{prop:exact-factorization}
Let \(w=(p,q)\in\R_{>0}^2\), let \(0<r<1\), and put
\(s=\log(1/r)\).  Then
\begin{equation}\label{eq:exact-factorization}
F_\mu^0(r^p,r^q)
=
r^{-\cH_\mu(w)}
\Lambda_{\mu,w}(s).
\end{equation}
Moreover,
\begin{equation}\label{eq:laplace-limit}
\lim_{s\to\infty}\Lambda_{\mu,w}(s)=\mu(E_w).
\end{equation}
\end{proposition}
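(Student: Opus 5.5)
The factorization itself is a direct computation inside the integral \eqref{eq:frozen-model}, and the limit follows from dominated convergence.

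For the identity, fix \(w=(p,q)\), \(0<r<1\), and \(s=\log(1/r)\). For every \(\alpha\in E\) we have
\[
(r^p)^{-\alpha_1}(r^q)^{-\alpha_2}=r^{-\langle w,\alpha\rangle}=e^{s\langle w,\alpha\rangle}.
\]
Writing \(\langle w,\alpha\rangle=\cH_\mu(w)-\Delta_w(\alpha)\) by \eqref{eq:depth-map}, this becomes
\[
(r^p)^{-\alpha_1}(r^q)^{-\alpha_2}=r^{-\cH_\mu(w)}\,e^{-s\Delta_w(\alpha)}.
\]
The factor \(r^{-\cH_\mu(w)}\) does not depend on \(\alpha\), so it comes out of the integral and leaves exactly \(\Lambda_{\mu,w}(s)\) as in \eqref{eq:laplace-profile}. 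Finiteness is not an issue: \(\Delta_w\ge0\) gives \(0<e^{-s\Delta_w}\le1\), so \(\Lambda_{\mu,w}(s)\le\mu(E)<\infty\), consistent with Proposition~\ref{prop:interior-finiteness}. The second expression in \eqref{eq:laplace-profile} is the change-of-variables formula for the pushforward \(\nu_w\) from \eqref{eq:depth-measure}.

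For \eqref{eq:laplace-limit}, I will apply dominated convergence as \(s\to\infty\). The integrand \(e^{-s\Delta_w(\alpha)}\) converges pointwise to the indicator of \(\{\Delta_w=0\}=E_w\). This holds because the exponential is identically \(1\) where \(\Delta_w=0\) and tends to \(0\) where \(\Delta_w>0\). The integrand is dominated by the constant \(1\), which is \(\mu\)-integrable since \(\mu\) is finite. Hence \(\Lambda_{\mu,w}(s)\to\mu(E_w)\). The integrand is also monotone nonincreasing in \(s\), so monotone convergence gives the same conclusion and shows that the convergence is monotone.

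The only point needing care is measurability of the integrand. The map \(\Delta_w\) is continuous on \(E\), being affine in \(\alpha\), so \(E_w\) is closed and hence Borel. Nothing here is a genuine obstacle, since the statement is an exact computation.
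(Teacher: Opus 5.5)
Your proposal is correct and matches the paper's proof: you rewrite each monomial as \(r^{-\cH_\mu(w)}e^{-s\Delta_w(\alpha)}\), pull the \(\alpha\)-independent factor out of the integral, and get the limit by dominated convergence using \(e^{-s\Delta_w}\to\mathbf 1_{E_w}\) with the bound \(1\). Your extra remarks on monotonicity in \(s\) and on measurability are harmless additions.
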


\begin{proof}
Since
\[
(r^p)^{-\alpha_1}(r^q)^{-\alpha_2}
=
r^{-\langle w,\alpha\rangle}
=
r^{-\cH_\mu(w)}
e^{-s\Delta_w(\alpha)},
\]
integration over \(E\) gives \eqref{eq:exact-factorization}.  For each
\(\alpha\in E\),
\[
e^{-s\Delta_w(\alpha)}
\longrightarrow
\mathbf 1_{E_w}(\alpha)
\]
as \(s\to\infty\), and the integrand is bounded by \(1\).  Dominated
convergence therefore yields \eqref{eq:laplace-limit}.
\end{proof}

\begin{corollary}[Positive exposed-face mass]
\label{cor:positive-face-mass}
If \(\mu(E_w)>0\), then
\[
F_\mu^0(r^p,r^q)\asympC r^{-\cH_\mu(w)}
\qquad (r\downarrow0).
\]
\end{corollary}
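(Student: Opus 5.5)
The claim follows directly from the exact factorization in Proposition~\ref{prop:exact-factorization}. The plan is to show that the Laplace factor \(\Lambda_{\mu,w}(s)\) is bounded above and below by positive constants for all \(s\ge0\), i.e.\ for all \(0<r<1\). Once this holds, \eqref{eq:exact-factorization} gives
\[
m\,r^{-\cH_\mu(w)}\le F_\mu^0(r^p,r^q)\le M\,r^{-\cH_\mu(w)}
\]
with \(0<m\le M<\infty\), which is the asserted two-sided comparison as \(r\downarrow0\).

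\textbf{Upper bound.} The depth map satisfies \(\Delta_w(\alpha)\ge0\) on \(E\) by \eqref{eq:depth-map}. Hence \(e^{-s\Delta_w(\alpha)}\le1\) for every \(s\ge0\), and integrating against \(\mu\) gives \(\Lambda_{\mu,w}(s)\le\mu(E)<\infty\). This uses only finiteness of \(\mu\).

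\textbf{Lower bound.} On the exposed face \(E_w\) we have \(\Delta_w\equiv0\), so the integrand equals \(1\) there. The integrand is nonnegative elsewhere, so
\[
\Lambda_{\mu,w}(s)\ge\int_{E_w}e^{-s\Delta_w}\,\dd\mu=\mu(E_w)>0
\qquad\text{for every } s\ge0.
\]
This bound is uniform in \(s\), so it does not even require the limit \eqref{eq:laplace-limit}. That limit is consistent with it, and shows the comparison constants can be taken arbitrarily close to \(\mu(E_w)\) for small \(r\).

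There is no real obstacle. The only point to check is that \(E_w\) is a Borel set, so that \(\mu(E_w)\) makes sense. This holds because \(E_w\) is closed: it is the preimage of a point under the continuous map \(\alpha\mapsto\langle w,\alpha\rangle\) on the compact set \(E\). Finally, \eqref{eq:frozen-comparison} transfers the same comparison to \(F_\mu\) along the weighted ray, with constants multiplied by \(c_u\) and \(C_u\), uniformly in the tangential variable \(z\).
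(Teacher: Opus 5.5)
Your proof is correct and follows the same route as the paper: the exact factorization of Proposition~\ref{prop:exact-factorization}, followed by two-sided bounds on \(\Lambda_{\mu,w}\). The paper obtains those bounds from the limit \(\Lambda_{\mu,w}(s)\to\mu(E_w)\), so only for sufficiently large \(s\). Your direct estimate \(\mu(E_w)\le\Lambda_{\mu,w}(s)\le\mu(E)\) instead holds for every \(s\ge0\), gives explicit constants, and avoids dominated convergence.
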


\begin{proof}
By Proposition~\ref{prop:exact-factorization},
\(\Lambda_{\mu,w}(s)\to\mu(E_w)>0\).  Hence it is bounded above and below by
positive constants for all sufficiently large \(s\).
\end{proof}

\begin{corollary}[Zero exposed-face mass]
\label{cor:zero-face-mass}
If \(\mu(E_w)=0\), then
\[
F_\mu^0(r^p,r^q)
=
o\!\left(r^{-\cH_\mu(w)}\right)
\qquad (r\downarrow0).
\]
\end{corollary}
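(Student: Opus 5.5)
The plan is to read the statement directly off the exact factorization in Proposition~\ref{prop:exact-factorization}. For \(0<r<1\) and \(s=\log(1/r)\),
\[
r^{\cH_\mu(w)}F_\mu^0(r^p,r^q)=\Lambda_{\mu,w}(s),
\]
so the claim \(F_\mu^0(r^p,r^q)=o(r^{-\cH_\mu(w)})\) as \(r\downarrow0\) is equivalent to \(\Lambda_{\mu,w}(s)\to0\) as \(s\to\infty\). The latter is exactly the limit \eqref{eq:laplace-limit} under the hypothesis \(\mu(E_w)=0\).

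To make the argument self-contained, I would briefly recall why the limit holds. For each \(\alpha\in E\) the integrand \(e^{-s\Delta_w(\alpha)}\) is bounded by \(1\), and \(1\) is \(\mu\)-integrable because \(\mu(E)<\infty\). As \(s\to\infty\) the integrand converges pointwise to \(\mathbf 1_{E_w}(\alpha)\), since \(\Delta_w(\alpha)>0\) off \(E_w\) and \(\Delta_w(\alpha)=0\) on \(E_w\). Dominated convergence then gives
\[
\Lambda_{\mu,w}(s)\to\mu(E_w)=0.
\]
Translating back through \(r=e^{-s}\), with \(s\to\infty\) exactly when \(r\downarrow0\), yields the little-\(o\) statement.

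There is no real obstacle here. The only point to note is that \(\Lambda_{\mu,w}(s)>0\) for every finite \(s\), since \(\mu\neq0\) and the integrand is positive. So the decay is genuinely sub-power but never degenerates to zero at finite radius. This distinction matters later, in the critical analysis, but it is not needed for the present corollary.
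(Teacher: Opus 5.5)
Your proposal is correct and follows the paper's proof: both read the claim off the exact factorization \(r^{\cH_\mu(w)}F_\mu^0(r^p,r^q)=\Lambda_{\mu,w}(\log(1/r))\) and use the dominated-convergence limit \(\Lambda_{\mu,w}(s)\to\mu(E_w)=0\). Your recap of the dominated-convergence step only repeats what the paper already proves inside Proposition~\ref{prop:exact-factorization}.
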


\begin{proof}
This follows from
\(\Lambda_{\mu,w}(s)\to0\) in
Proposition~\ref{prop:exact-factorization}.
\end{proof}

\subsection{Atomic and measure formulations}

The preceding results settle the atomic-versus-measure design question.
A countably atomic measure can have an exponent sequence approaching an
exposed face while assigning no mass to the limiting face itself.  Such a
series already satisfies
\[
\Lambda_{\mu,w}(s)\longrightarrow0
\]
and therefore departs from finite-support two-sided monomial asymptotics.
The measure formulation is consequently not needed to create the
phenomenon, but it is the natural ambient class in which the relevant
invariant is expressed.

It is useful to distinguish three nested subclasses:
\begin{enumerate}[label=(\arabic*)]
\item finite atomic measures, which recover finite positive Newton sums;
\item countably atomic measures, which already exhibit the critical-tail phenomenon;
\item arbitrary finite positive Radon measures, which provide the natural unified class.
\end{enumerate}
The proofs below are written at the measure level whenever no additional
argument is required; atomic examples are used to display the mechanisms
explicitly.

\section{Logarithmic order and strict-gap radial geometry}
\label{sec:strict-gap}

\subsection{Relation to classical asymptotics}

The limit of a scaled logarithmic exponential integral to the supremum of
its phase is a classical Laplace-principle phenomenon; in probabilistic
language it is closely related to Varadhan's asymptotic lemma
\cite{Varadhan1966}.  The same integral is the log-Laplace transform of the
exponent measure, an object widely used in convex geometry
\cite{KlartagMilman2012}.  In finite atomic notation it is also the
zero-temperature limit of a log-sum-exp function, while in Newton notation
it is the passage from a positive sum to the maximal weighted exponent.
Thus Proposition~\ref{prop:log-order} is included for completeness and to
fix hypotheses, but is not presented as an original theorem.

Adjacent generalized-series papers study monomialization and stratified
resolution rather than intrinsic length geometry
\cite{MartinRolinSanz2013,Palma2022,MolinaPalmaSanz2024,Palma2024}.  The
contribution below therefore begins not with the logarithmic-order limit, but
with the use of the exposed-face Laplace profile as an exact finite-distance
and completion invariant for the Newton metric class.

\begin{proposition}[Directional logarithmic order]
\label{prop:log-order}
For every \(w=(p,q)\in\R_{>0}^2\),
\begin{equation}\label{eq:log-order}
\lim_{r\downarrow0}
\frac{\log F_\mu^0(r^p,r^q)}{\log(1/r)}
=
\cH_\mu(w).
\end{equation}
The same limit holds with \(F_\mu\) in place of \(F_\mu^0\), uniformly in
\(z\), under the coefficient bounds \eqref{eq:coefficient-bounds}.
\end{proposition}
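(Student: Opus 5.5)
We use the exact factorization of Proposition~\ref{prop:exact-factorization}. With \(s=\log(1/r)\) it gives
\[
\frac{\log F_\mu^0(r^p,r^q)}{\log(1/r)}
=
\cH_\mu(w)+\frac{\log\Lambda_{\mu,w}(s)}{s}.
\]
So it suffices to prove that \(s^{-1}\log\Lambda_{\mu,w}(s)\to0\) as \(s\to\infty\).

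The upper bound is immediate. Since \(\Delta_w\ge0\), we have \(\Lambda_{\mu,w}(s)\le\mu(E)\), and hence
\[
\limsup_{s\to\infty} s^{-1}\log\Lambda_{\mu,w}(s)\le\lim_{s\to\infty} s^{-1}\log\mu(E)=0.
\]

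For the lower bound we use the support hypothesis \(\supp\mu=E\). Fix \(\eta>0\) and choose a maximizer \(\alpha^*\in E_w\), which exists by compactness. The map \(\alpha\mapsto\Delta_w(\alpha)\) is continuous, so the set \(\{\alpha:\Delta_w(\alpha)<\eta\}\) is an open neighbourhood of \(\alpha^*\) in \(\R^2\). Because \(\alpha^*\in\supp\mu\), this neighbourhood has positive \(\mu\)-measure. In the notation of \eqref{eq:tail-function}, this says \(T_{\mu,w}(\eta)\ge m_\eta>0\). Restricting the integral defining \(\Lambda_{\mu,w}\) to this set gives
\[
\Lambda_{\mu,w}(s)\ge m_\eta e^{-s\eta}.
\]
Therefore
\[
\liminf_{s\to\infty}s^{-1}\log\Lambda_{\mu,w}(s)\ge -\eta.
\]
Since \(\eta>0\) was arbitrary, the limit is \(0\), and \eqref{eq:log-order} follows.

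This lower bound is the only step with content. It is exactly where positivity of mass near every point of the support enters. When \(\mu(E_w)=0\), Corollary~\ref{cor:zero-face-mass} shows the Laplace factor tends to zero, and the point is that it decays subexponentially.

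For the variable-coefficient statement we use \eqref{eq:frozen-comparison}. It gives
\[
\log c_u+\log F_\mu^0\le\log F_\mu(r^p,r^q,z)\le\log C_u+\log F_\mu^0,
\]
where the constants \(c_u\) and \(C_u\) are independent of \(z\). After division by \(\log(1/r)\), the additive constants contribute \(O(1/\log(1/r))\) uniformly in \(z\). The uniform limit then follows from the frozen case.
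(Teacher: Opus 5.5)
Your proof is correct and is essentially the paper's argument. The paper bounds \(F_\mu^0(r^p,r^q)\) above by \(\mu(E)e^{s\cH_\mu(w)}\) and below by \(\mu(A_\eta)e^{s(\cH_\mu(w)-\eta)}\), where \(A_\eta=\{\alpha\in E:\Delta_w(\alpha)<\eta\}\) has positive mass because \(E=\supp\mu\). After the exact factorization these are your two bounds on \(\Lambda_{\mu,w}\), and the variable-coefficient case is likewise handled by the bounded additive change of the logarithm coming from the frozen-coefficient comparison.
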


\begin{proof}
Put \(s=\log(1/r)\).  The upper bound follows immediately from
\[
F_\mu^0(r^p,r^q)
\le \mu(E)e^{s\cH_\mu(w)}.
\]
For the lower bound, fix \(\eta>0\).  Since \(E=\supp\mu\) and the maximum
of \(\alpha\mapsto\langle w,\alpha\rangle\) is attained on \(E\), the set
\[
A_\eta=\{\alpha\in E:
\langle w,\alpha\rangle>\cH_\mu(w)-\eta\}
\]
has positive \(\mu\)-measure.  Hence
\[
F_\mu^0(r^p,r^q)
\ge
\mu(A_\eta)e^{s(\cH_\mu(w)-\eta)}.
\]
After taking logarithms and dividing by \(s\), the upper and lower bounds
give
\[
\begin{aligned}
\cH_\mu(w)-\eta
&\le
\liminf_{s\to\infty}\frac1s\log F_\mu^0(e^{-ps},e^{-qs}),\\
\limsup_{s\to\infty}\frac1s\log F_\mu^0(e^{-ps},e^{-qs})
&\le \cH_\mu(w).
\end{aligned}
\]
Letting \(\eta\downarrow0\) proves \eqref{eq:log-order}.  The comparison
\eqref{eq:frozen-comparison} changes the logarithm by a bounded additive
quantity, so the same limit holds for \(F_\mu\), uniformly in \(z\).
\end{proof}

For \(w=(p,q)\), set
\begin{equation}\label{eq:m-w}
m(w)=\min\{p,q\}
\end{equation}
and consider the weighted radial curve
\begin{equation}\label{eq:weighted-curve}
\gamma_w(r)=(r^p,r^q,z_0),\qquad 0<r<r_0.
\end{equation}
Uniform ellipticity of \(g_0\) gives
\begin{equation}\label{eq:curve-speed}
|\dot\gamma_w(r)|_{g_0}\asympC r^{m(w)-1}.
\end{equation}

\begin{theorem}[Strict-gap weighted radial classification]
\label{thm:strict-gap-radial}
Let \(w=(p,q)\in\R_{>0}^2\).  Then:
\begin{enumerate}[label=(\roman*)]
\item If \(\cH_\mu(w)<2m(w)\), the curve \(\gamma_w\) has finite
\(g_\mu\)-length as \(r\downarrow0\).
\item If \(\cH_\mu(w)>2m(w)\), the curve \(\gamma_w\) has infinite
\(g_\mu\)-length as \(r\downarrow0\).
\end{enumerate}
No assertion is made here when \(\cH_\mu(w)=2m(w)\).
\end{theorem}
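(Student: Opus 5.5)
The plan is to compute the \(g_\mu\)-length of \(\gamma_w\) directly and reduce it to a one-dimensional power integral in \(r\). By definition of the conformal metric \eqref{eq:metric-model}, the \(g_\mu\)-speed of \(\gamma_w\) is \(F_\mu(\gamma_w(r))^{1/2}|\dot\gamma_w(r)|_{g_0}\), so
\[
L_{g_\mu}(\gamma_w|_{(0,r_0)})=\int_0^{r_0}F_\mu(r^p,r^q,z_0)^{1/2}\,|\dot\gamma_w(r)|_{g_0}\,\dd r .
\]
I will first replace \(F_\mu\) by the frozen model. The comparison \eqref{eq:frozen-comparison} changes the integrand only by the factors \(c_u^{1/2}\) and \(C_u^{1/2}\), so finiteness is unaffected. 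Next I will use \eqref{eq:curve-speed} to replace the speed by \(r^{m(w)-1}\), again up to constants. For this I only need that \(|\dot\gamma_w|_{\mathrm E}=(p^2r^{2p-2}+q^2r^{2q-2})^{1/2}\asympC r^{m(w)-1}\) for \(0<r<1\), combined with \eqref{eq:background-ellipticity}.

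Then I will invoke the exact factorization of Proposition~\ref{prop:exact-factorization}. It gives \(F_\mu^0(r^p,r^q)=r^{-\cH_\mu(w)}\Lambda_{\mu,w}(\log(1/r))\), so the length is comparable to
\[
\int_0^{r_0} r^{\,m(w)-1-\cH_\mu(w)/2}\,\Lambda_{\mu,w}(\log(1/r))^{1/2}\,\dd r .
\]
The Laplace factor is bounded above trivially, since \(0<\Lambda_{\mu,w}\le\mu(E)\) because \(\Delta_w\ge0\). This yields (i) at once: the integrand is at most \(\mu(E)^{1/2} r^{\beta-1}\) with \(\beta=m(w)-\cH_\mu(w)/2>0\), and this is integrable at \(0\).

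For (ii) I need a lower bound, and the subtlety is that \(\Lambda_{\mu,w}(s)\) may decay to \(0\) when \(\mu(E_w)=0\). I will not use the exact factorization here. Instead I will use the logarithmic order, Proposition~\ref{prop:log-order}, or directly its lower-bound argument. Fix \(\eta>0\) with \(\cH_\mu(w)-\eta>2m(w)\). The set \(A_\eta\) has positive measure, so \(F_\mu^0(r^p,r^q)\ge\mu(A_\eta)r^{-(\cH_\mu(w)-\eta)}\) for \(0<r<1\). The integrand is then bounded below by a constant times \(r^{m(w)-1-(\cH_\mu(w)-\eta)/2}\). Its exponent is \(<-1\), so the integral diverges at \(0\).

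The only step requiring care is this subexponential loss in (ii) from a possibly vanishing Laplace factor. It is absorbed by the strict gap through the choice of \(\eta\), which is also why no conclusion is drawn at the critical equality \(\cH_\mu(w)=2m(w)\). All constants are uniform in \(z_0\) by \eqref{eq:coefficient-bounds}.
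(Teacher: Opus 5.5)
Your proposal is correct and follows the paper's proof essentially step for step. Both arguments reduce the length to \(\int_0^{r_0}(F_\mu^0(r^p,r^q))^{1/2}r^{m(w)-1}\,\dd r\), bound it above via \(F_\mu^0(r^p,r^q)\le\mu(E)r^{-\cH_\mu(w)}\) (you phrase this through \(\Lambda_{\mu,w}\le\mu(E)\) in Proposition~\ref{prop:exact-factorization}), and bound it below using the positive-measure set \(A_\eta\) from the logarithmic-order argument.
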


\begin{proof}
By \eqref{eq:frozen-comparison}, \eqref{eq:background-ellipticity}, and
\eqref{eq:curve-speed}, the radial length is comparable to
\begin{equation}\label{eq:radial-length-model}
\int_0^{r_0}
\bigl(F_\mu^0(r^p,r^q)\bigr)^{1/2}
r^{m(w)-1}\,\dd r.
\end{equation}
If \(\cH_\mu(w)<2m(w)\), then
\[
F_\mu^0(r^p,r^q)
\le \mu(E)r^{-\cH_\mu(w)},
\]
so the integrand in \eqref{eq:radial-length-model} is bounded above by a
constant multiple of
\[
r^{m(w)-1-\cH_\mu(w)/2},
\]
whose exponent is greater than \(-1\).  This proves (i).

Suppose now that \(\cH_\mu(w)>2m(w)\).  Choose \(\eta>0\) such that
\(\cH_\mu(w)-\eta>2m(w)\).  The set \(A_\eta\) used in the proof of
Proposition~\ref{prop:log-order} has positive measure, and therefore
\[
F_\mu^0(r^p,r^q)
\ge \mu(A_\eta)r^{-(\cH_\mu(w)-\eta)}.
\]
The integrand in \eqref{eq:radial-length-model} is bounded below by a
positive constant times
\[
r^{m(w)-1-(\cH_\mu(w)-\eta)/2},
\]
whose exponent is strictly less than \(-1\).  The integral diverges, proving
(ii).
\end{proof}

\begin{remark}[What the theorem does and does not classify]
Theorem~\ref{thm:strict-gap-radial} classifies the length of the prescribed
weighted radial approach.  It does not yet prove a global necessity theorem
for all curves approaching the corner.  Such a statement requires a
pathwise lower-bound argument and will be addressed separately.  This
distinction prevents the directional logarithmic estimate from being used
beyond what it proves.
\end{remark}

\begin{remark}[Why equality is genuinely different]
At equality, Proposition~\ref{prop:exact-factorization} reduces
\eqref{eq:radial-length-model}, after \(s=\log(1/r)\), to an integral
controlled by \(\Lambda_{\mu,w}(s)^{1/2}\).  The support function alone no
longer decides convergence.  Section~\ref{sec:critical} compares atomic and diffuse tails,
constructs counterexamples with identical support function but different
critical lengths, and records the role of Abelian and Tauberian results for
Laplace--Stieltjes transforms.
\end{remark}

\section{Critical radial tails}
\label{sec:critical}

\subsection{Relation to transform asymptotics}

The critical calculation below intersects three established subjects, while
the resulting metric criterion is specific to the present Newton setting.  First, Abelian and Tauberian theory relates
the mass of a measure near an endpoint to the asymptotics of its
Laplace--Stieltjes transform; regular-variation refinements are classical
and continue to be developed \cite{deHaan1977}.  Those results can be used
to evaluate particular profiles \(\Lambda_{\mu,w}\), but they do not by
themselves identify a Riemannian finite-distance threshold.  Second,
Grushin-type geometries exhibit singular distance laws, snowflaking, and
quasisymmetric boundary phenomena \cite{Wu2015}; the conformal factors in
that literature are prescribed directly rather than generated by an
infinite Newton exponent distribution.  Third, generalized Newton-series
papers resolve or monomialize generalized analytic singularities
\cite{MartinRolinSanz2013,Palma2022,MolinaPalmaSanz2024,Palma2024}, but do
not appear to connect exposed-face mass tails with metric completion.

The integral appearing in Theorem~\ref{thm:critical-tail} is elementary
once the radial pullback is written down, and its transform asymptotics are
classical in regularly varying subclasses.  The contribution here is to
identify this integral as the critical invariant for infinite positive
Newton metrics and to show, by examples, that the Newton support and support
function alone are insufficient.  This remains
a non-duplication assessment rather than an absolute priority claim.

\begin{theorem}[Critical-tail criterion for weighted radial length]
\label{thm:critical-tail}
Let \(w=(p,q)\in\R_{>0}^2\), let \(m(w)=\min\{p,q\}\), and suppose
\begin{equation}\label{eq:critical-equality}
\cH_\mu(w)=2m(w).
\end{equation}
Then the weighted radial curve \(\gamma_w(r)=(r^p,r^q,z_0)\) has finite
\(g_\mu\)-length near \(r=0\) if and only if
\begin{equation}\label{eq:critical-tail-integral}
\int_{s_0}^{\infty}\Lambda_{\mu,w}(s)^{1/2}\,\dd s<\infty,
\qquad s_0=\log(1/r_0).
\end{equation}
The equivalence is uniform in \(z_0\) on compact tangential coordinate
sets on which the bounds for \(g_0\) and \(u\) are uniform.
\end{theorem}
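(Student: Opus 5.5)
The plan is to reduce the length of \(\gamma_w\) to the model integral \eqref{eq:radial-length-model}, exactly as in the proof of Theorem~\ref{thm:strict-gap-radial}, and then rewrite it using the exact factorization of Proposition~\ref{prop:exact-factorization}. By \eqref{eq:frozen-comparison}, \eqref{eq:background-ellipticity} and \eqref{eq:curve-speed}, the \(g_\mu\)-length of \(\gamma_w|_{(0,r_0)}\) is bounded above and below by positive constant multiples of
\[
I=\int_0^{r_0}\bigl(F_\mu^0(r^p,r^q)\bigr)^{1/2}r^{m(w)-1}\,\dd r .
\]
The constants depend only on \(c_u,C_u,c_0,C_0\) and \(p,q\). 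I would first check \eqref{eq:curve-speed} explicitly: \(|\dot\gamma_w(r)|_{\mathrm E}^2=p^2r^{2p-2}+q^2r^{2q-2}\), and this is comparable to \(r^{2m(w)-2}\) for \(0<r<1\), with constants depending only on \(p,q\). This is where uniformity in \(z_0\) comes from. The tangential coordinate is frozen along \(\gamma_w\), so \(z_0\) enters only through the bounds on \(g_0\) and \(u\), which are assumed uniform on the relevant compact set.

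Next I would insert \eqref{eq:exact-factorization}, which gives \(F_\mu^0(r^p,r^q)=r^{-\cH_\mu(w)}\Lambda_{\mu,w}(s)\) with \(s=\log(1/r)\). Under the critical hypothesis \eqref{eq:critical-equality}, the power of \(r\) in the integrand becomes
\[
r^{-\cH_\mu(w)/2}\,r^{m(w)-1}=r^{-m(w)}r^{m(w)-1}=r^{-1}.
\]
So \(I=\int_0^{r_0}\Lambda_{\mu,w}(\log(1/r))^{1/2}\,r^{-1}\,\dd r\). Substituting \(r=e^{-s}\), so that \(\dd r/r=-\dd s\), turns this into exactly
\[
\int_{s_0}^{\infty}\Lambda_{\mu,w}(s)^{1/2}\,\dd s .
\]
Finiteness of the length is then equivalent to \eqref{eq:critical-tail-integral}, because the two quantities are comparable up to multiplicative constants.

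I expect no serious obstacle. The only points to handle with care are the following. The two-sided comparison must really be two-sided, so that \eqref{eq:curve-speed} and \eqref{eq:frozen-comparison} are used in both directions and not just as upper bounds. The integrand is measurable and positive: \(\Lambda_{\mu,w}\) is continuous and nonincreasing in \(s\) by dominated convergence. Finally, the choice of \(r_0<1\) is irrelevant, because the integral over any compact \(s\)-interval is finite, \(\Lambda_{\mu,w}\) being bounded by \(\mu(E)\). I would also remark that if \(\mu(E_w)>0\), Proposition~\ref{prop:exact-factorization} forces divergence, so the criterion is only nontrivial when the exposed face carries zero mass.
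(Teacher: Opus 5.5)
Your proposal is correct and follows the paper's proof essentially step for step. You reduce to the model radial integral via the frozen-coefficient comparison, ellipticity and the curve-speed estimate, then insert the exact directional factorization so that the critical equality turns the integrand into \(r^{-1}\Lambda_{\mu,w}(\log(1/r))^{1/2}\), and finally substitute \(s=\log(1/r)\). Your explicit check of the curve speed and of the irrelevance of \(r_0\) adds detail the paper leaves implicit.
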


\begin{proof}
By \eqref{eq:background-ellipticity},
\eqref{eq:coefficient-bounds}, and \eqref{eq:curve-speed},
\begin{align*}
L_{g_\mu}(\gamma_w)
&\asympC
\int_0^{r_0}
\bigl(F_\mu^0(r^p,r^q)\bigr)^{1/2}
 r^{m(w)-1}\,\dd r.
\end{align*}
Proposition~\ref{prop:exact-factorization} and
\eqref{eq:critical-equality} give
\[
\bigl(F_\mu^0(r^p,r^q)\bigr)^{1/2}
=
r^{-m(w)}
\Lambda_{\mu,w}\!\left(\log\frac1r\right)^{1/2}.
\]
Consequently,
\[
L_{g_\mu}(\gamma_w)
\asympC
\int_0^{r_0}
\Lambda_{\mu,w}\!\left(\log\frac1r\right)^{1/2}
\frac{\dd r}{r}.
\]
The substitution \(s=\log(1/r)\) transforms the last integral exactly
into
\[
\int_{s_0}^{\infty}\Lambda_{\mu,w}(s)^{1/2}\,\dd s.
\]
The two-sided metric and coefficient comparisons prove the equivalence.
\end{proof}

\begin{corollary}[Positive mass on the exposed face]
\label{cor:critical-face-atom}
Under the critical equality \eqref{eq:critical-equality}, if
\(\mu(E_w)>0\), then \(\gamma_w\) has infinite \(g_\mu\)-length.
\end{corollary}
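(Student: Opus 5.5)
The plan is to reduce the claim directly to Theorem~\ref{thm:critical-tail}. Under the critical equality \eqref{eq:critical-equality}, that theorem says $\gamma_w$ has finite $g_\mu$-length near $r=0$ if and only if $\int_{s_0}^\infty\Lambda_{\mu,w}(s)^{1/2}\,\dd s<\infty$. It therefore suffices to show that this integral diverges whenever $\mu(E_w)>0$.

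The key step is a uniform lower bound for the Laplace profile. On the exposed face, $\Delta_w(\alpha)=0$ for every $\alpha\in E_w$, so $e^{-s\Delta_w(\alpha)}=1$ there. Since the integrand in \eqref{eq:laplace-profile} is nonnegative everywhere, we get
\[
\Lambda_{\mu,w}(s)\ge\int_{E_w}e^{-s\Delta_w(\alpha)}\,\dd\mu(\alpha)=\mu(E_w)
\]
for every $s\ge0$. This bound holds for all $s$, not merely asymptotically, so no appeal to the limit \eqref{eq:laplace-limit} is needed. (Alternatively, that limit together with Corollary~\ref{cor:positive-face-mass} would give the same bound for all large $s$.) Consequently
\[
\int_{s_0}^{S}\Lambda_{\mu,w}(s)^{1/2}\,\dd s\ge\mu(E_w)^{1/2}(S-s_0)\to\infty
\qquad (S\to\infty).
\]
Theorem~\ref{thm:critical-tail} then yields infinite length.

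As a consistency check, the same conclusion can be read off geometrically. In $r$-coordinates, \eqref{eq:exact-factorization} with $\cH_\mu(w)=2m(w)$ gives $F_\mu^0(r^p,r^q)\ge\mu(E_w)r^{-2m(w)}$. The length integrand \eqref{eq:radial-length-model} is then bounded below by a constant multiple of $r^{-1}$, which is logarithmically divergent at $0$.

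There is no real obstacle. The only point to watch is that the comparison constants in Theorem~\ref{thm:critical-tail}, coming from \eqref{eq:background-ellipticity}, \eqref{eq:coefficient-bounds}, and \eqref{eq:curve-speed}, are two-sided. A divergent lower model integral therefore does force infinite $g_\mu$-length, uniformly in $z_0$ on compact tangential sets.
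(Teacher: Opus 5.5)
Your proof is correct and follows the paper's route: you reduce to Theorem~\ref{thm:critical-tail} and show that $\Lambda_{\mu,w}$ stays bounded below by a positive constant, so the tail integral diverges. The only difference is that you get $\Lambda_{\mu,w}(s)\ge\mu(E_w)$ for every $s\ge0$ directly, by restricting the integral to $E_w$. The paper instead invokes the limit \eqref{eq:laplace-limit}, so your bound is slightly more elementary and avoids dominated convergence.
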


\begin{proof}
By Proposition~\ref{prop:exact-factorization},
\(\Lambda_{\mu,w}(s)\to\mu(E_w)>0\).  Hence the integral in
\eqref{eq:critical-tail-integral} diverges.
\end{proof}

\begin{remark}[The equality case in the finite theory]
Every finite positive Newton interaction assigns positive mass to at least
one exponent on each exposed face.  Corollary~\ref{cor:critical-face-atom}
therefore recovers the divergence at equality from the finite-support
setting.  Critical accessibility can occur only after the leading face has
zero mass and is approached by infinitely many exponents or by diffuse
mass.
\end{remark}

\subsection{Regularly varying critical profiles}

Theorem~\ref{thm:critical-tail} is exact and requires no regular-variation
assumption.  Such assumptions become useful only for evaluating the tail
integral.

\begin{corollary}[Power-law Laplace profiles]
\label{cor:power-profile}
Assume \eqref{eq:critical-equality} and suppose that, for some \(\beta\ge0\)
and some slowly varying function \(\ell\),
\begin{equation}\label{eq:regular-profile}
\Lambda_{\mu,w}(s)\asympC s^{-\beta}\ell(s)
\qquad(s\to\infty).
\end{equation}
Then \(\gamma_w\) has finite length if and only if
\begin{equation}\label{eq:regular-profile-test}
\int^{\infty}s^{-\beta/2}\ell(s)^{1/2}\,\dd s<\infty.
\end{equation}
In particular, if \(\ell\) is bounded above and below by positive
constants for large \(s\), then the curve is accessible precisely when
\(\beta>2\).
\end{corollary}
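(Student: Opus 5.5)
The plan is to reduce the statement directly to Theorem~\ref{thm:critical-tail}. Under the critical equality \eqref{eq:critical-equality}, that theorem says that \(\gamma_w\) has finite \(g_\mu\)-length exactly when \(\int_{s_0}^\infty\Lambda_{\mu,w}(s)^{1/2}\,\dd s<\infty\). So it suffices to show that this integral is finite exactly when \eqref{eq:regular-profile-test} holds. The two-sided comparison \eqref{eq:regular-profile} provides constants \(0<c\le C\) and \(S_1\ge s_0\) such that
\[
c\,s^{-\beta}\ell(s)\le\Lambda_{\mu,w}(s)\le C\,s^{-\beta}\ell(s)\qquad (s\ge S_1).
\]
Taking square roots, which preserves these inequalities because all quantities are positive, gives
\[
\sqrt c\,s^{-\beta/2}\ell(s)^{1/2}\le\Lambda_{\mu,w}(s)^{1/2}\le\sqrt C\,s^{-\beta/2}\ell(s)^{1/2}
\qquad (s\ge S_1).
\]

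Next, the finite interval \([s_0,S_1]\) does not affect convergence. The profile \(\Lambda_{\mu,w}\) is bounded by \(\mu(E)\). We take \(\ell\) to be measurable and locally bounded on \([S_1,\infty)\), as is standard for slowly varying functions by the uniform convergence theorem, possibly after enlarging \(S_1\). Both integrands are therefore locally integrable on any compact interval. The comparison test on \([S_1,\infty)\) then gives the equivalence of the two improper integrals, and Theorem~\ref{thm:critical-tail} converts this into the length criterion. This step is routine. The only point to watch is that the implicit constants in \(\asympC\) are independent of \(s\), which is exactly what the hypothesis means.

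For the special case, suppose \(\ell\) is bounded between positive constants for large \(s\). Then \eqref{eq:regular-profile-test} is equivalent to \(\int^\infty s^{-\beta/2}\,\dd s<\infty\), and this holds if and only if \(\beta/2>1\), that is, \(\beta>2\).

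The main place requiring care is the first step, namely interpreting \eqref{eq:regular-profile} as a genuine two-sided inequality with fixed constants, so that the comparison survives the square root and the integration. Beyond that the argument is the comparison test applied to the exact criterion already established.
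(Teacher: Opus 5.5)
Your proposal is correct and is the same argument as the paper's, which substitutes the two-sided comparison \eqref{eq:regular-profile} into Theorem~\ref{thm:critical-tail} and uses the elementary power-integral test for the special case. One minor simplification: local boundedness of \(\ell\) on \([S_1,\infty)\) already follows from \(c\,s^{-\beta}\ell(s)\le\Lambda_{\mu,w}(s)\le\mu(E)\), so you do not need the uniform convergence theorem.
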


\begin{proof}
Substitute \eqref{eq:regular-profile} into
Theorem~\ref{thm:critical-tail}.  The final assertion is the elementary
power-integral test.
\end{proof}

\subsection{Same Newton support, opposite critical lengths}

We now test whether the support function might still determine critical
length after additional compactness or discreteness assumptions.  The next
construction shows that it does not, even for countably atomic measures.

Fix \(w\in\R_{>0}^2\) and choose
\(\alpha_*\in\R_{>0}^2\) satisfying
\begin{equation}\label{eq:alpha-star-critical}
\langle w,\alpha_*\rangle=2m(w).
\end{equation}
Multiplying \(w\) by a sufficiently large positive constant only
reparametrizes the image of \(\gamma_w\).  After doing so, choose
\[
\alpha_n
=
\alpha_*
-
2^{-n}\frac{w}{|w|^2}.
\]
The rescaling may be chosen so that every \(\alpha_n\) remains in
\(\R_{\ge0}^2\).  Then
\begin{equation}\label{eq:dyadic-depth-points}
\alpha_n\longrightarrow\alpha_* ,
\qquad
\langle w,\alpha_* -\alpha_n\rangle=2^{-n}.
\end{equation}
For \(\beta>0\), define
\begin{equation}\label{eq:dyadic-measure}
\mu_\beta
=
\frac1{Z_\beta}
\sum_{n=1}^{\infty}2^{-\beta n}\delta_{\alpha_n},
\qquad
Z_\beta=\sum_{n=1}^{\infty}2^{-\beta n}.
\end{equation}
All measures \(\mu_\beta\) have the same compact support
\begin{equation}\label{eq:common-support}
E=\{\alpha_*\}\cup\{\alpha_n:n\ge1\},
\end{equation}
but \(\mu_\beta(\{\alpha_*\})=0\).

\begin{lemma}[Dyadic Laplace profile]
\label{lem:dyadic-profile}
For every \(\beta>0\),
\begin{equation}\label{eq:dyadic-profile-asymp}
\Lambda_{\mu_\beta,w}(s)\asympC s^{-\beta}
\qquad(s\ge2).
\end{equation}
\end{lemma}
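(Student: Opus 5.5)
The plan is to compute $\Lambda_{\mu_\beta,w}$ explicitly from its definition and then compare the resulting dyadic series with $s^{-\beta}$ by grouping terms around the index where $2^{-n}s\approx 1$. We first identify the support function. Since $\langle w,\alpha_n\rangle=\langle w,\alpha_*\rangle-2^{-n}<\langle w,\alpha_*\rangle$ and $E$ is given by \eqref{eq:common-support}, we have $\cH_{\mu_\beta}(w)=\langle w,\alpha_*\rangle$. Hence $\Delta_w(\alpha_n)=2^{-n}$ by \eqref{eq:dyadic-depth-points}, and $\Delta_w(\alpha_*)=0$; the point $\alpha_*$ carries no mass and so does not contribute. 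Definition~\ref{def:laplace-profile} then gives the exact formula
\[
\Lambda_{\mu_\beta,w}(s)=\frac1{Z_\beta}\sum_{n=1}^{\infty}2^{-\beta n}\exp\bigl(-s2^{-n}\bigr).
\]
It remains to show that this series is comparable to $s^{-\beta}$ for $s\ge2$, with constants depending only on $\beta$.

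For the lower bound, given $s\ge2$ we choose $N\ge1$ with $2^{N-1}\le s<2^N$. We keep only the term $n=N$. There $s2^{-N}<1$, so the exponential factor is at least $e^{-1}$, while $2^{-\beta N}>(2s)^{-\beta}$. This yields $\Lambda\ge Z_\beta^{-1}e^{-1}2^{-\beta}s^{-\beta}$.

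For the upper bound, we split the sum at $N$.

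\emph{Terms with $n\ge N$.} We bound the exponential by $1$. The remaining geometric tail is $\sum_{n\ge N}2^{-\beta n}=2^{-\beta N}/(1-2^{-\beta})\le s^{-\beta}/(1-2^{-\beta})$.

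\emph{Terms with $n<N$.} Write $n=N-k$ with $k\ge1$. Then $s2^{-n}\ge 2^{N-1}2^{-N+k}=2^{k-1}$, and $2^{-\beta n}=2^{-\beta N}2^{\beta k}\le s^{-\beta}2^{\beta k}$. So this part is at most
\[
s^{-\beta}\sum_{k\ge1}2^{\beta k}e^{-2^{k-1}}.
\]
This series converges, because the double-exponential decay of $e^{-2^{k-1}}$ beats the geometric growth of $2^{\beta k}$. Its sum is a constant $C_\beta$.

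Adding the two parts gives $\Lambda\le Z_\beta^{-1}\bigl((1-2^{-\beta})^{-1}+C_\beta\bigr)s^{-\beta}$, which proves \eqref{eq:dyadic-profile-asymp}. The only mildly delicate step is this last upper bound over the indices $n<N$, where the weights $2^{-\beta n}$ are large. What saves it is the double-exponential suppression coming from the exponential factor. I would check convergence of $\sum_k 2^{\beta k}e^{-2^{k-1}}$ by the ratio test. The restriction $s\ge2$ is used only to guarantee $N\ge1$, so that the chosen index lies in the summation range.
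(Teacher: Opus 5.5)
Your proof is correct and follows essentially the same route as the paper: you write out the explicit dyadic series, pick a crossover index $N$ with $s2^{-N}\approx1$, get the lower bound from terms with $s2^{-n}<1$, and get the upper bound by splitting into a geometric tail plus a head controlled by the convergent double-exponential series $\sum_k 2^{\beta k}e^{-2^{k-1}}$. Your index is shifted by one relative to the paper's and you keep a single term rather than the whole tail for the lower bound, which is immaterial; the only nit is that $2^{-\beta N}\ge(2s)^{-\beta}$ should be non-strict, with equality when $s=2^{N-1}$.
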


\begin{proof}
The normalization constant is irrelevant to two-sided comparability, so
write
\[
S_\beta(s)=\sum_{n=1}^{\infty}2^{-\beta n}e^{-s2^{-n}}.
\]
Choose the integer \(N\) such that \(2^N\le s<2^{N+1}\).  For \(n\ge N+1\),
we have \(s2^{-n}<1\), and therefore
\[
S_\beta(s)
\ge e^{-1}\sum_{n=N+1}^{\infty}2^{-\beta n}
\asympC 2^{-\beta N}
\asympC s^{-\beta}.
\]
For the upper bound, split the sum at \(N\).  The tail satisfies
\[
\sum_{n=N+1}^{\infty}2^{-\beta n}e^{-s2^{-n}}
\le
\sum_{n=N+1}^{\infty}2^{-\beta n}
\asympC s^{-\beta}.
\]
For the head, put \(k=N-n\).  Since \(s\ge2^N\),
\[
\sum_{n=1}^{N}2^{-\beta n}e^{-s2^{-n}}
\le
2^{-\beta N}
\sum_{k=0}^{N-1}2^{\beta k}e^{-2^k}.
\]
The last series is bounded independently of \(N\), because exponential
decay in \(2^k\) dominates geometric growth.  Thus the head is also
\(O(s^{-\beta})\), proving \eqref{eq:dyadic-profile-asymp}.
\end{proof}

\begin{theorem}[Support-function insufficiency at criticality]\label{thm:support-insufficiency}
\label{thm:same-support-opposite-length}
Let \(0<\beta_-\le2<\beta_+\).  The measures
\(\mu_{\beta_-}\) and \(\mu_{\beta_+}\) defined by
\eqref{eq:dyadic-measure} have:
\begin{enumerate}[label=(\roman*)]
\item the same compact exponent support \(E\);
\item the same convex Newton body and support function;
\item zero mass on the common exposed point \(\alpha_*\);
\item opposite critical radial behavior: the curve \(\gamma_w\) has
infinite length for \(\mu_{\beta_-}\) and finite length for
\(\mu_{\beta_+}\).
\end{enumerate}
Consequently, neither the closed exponent support, the convex Newton body,
nor the support function classifies critical radial accessibility.
\end{theorem}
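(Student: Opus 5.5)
Items (i)--(iii) are structural and I would dispose of them first. Both measures are countable sums of positive multiples of $\delta_{\alpha_n}$ with the same atoms. The support of such a measure is the closure of the atom set, and since $\alpha_n\to\alpha_*$ by \eqref{eq:dyadic-depth-points}, that closure is $E=\{\alpha_*\}\cup\{\alpha_n\}$ for every $\beta>0$. This gives (i), and $E$ is compact because it is a convergent sequence together with its limit. The convex Newton body and the support function $\cH_\mu$ in \eqref{eq:support-function} depend only on $E$, so (ii) follows immediately. For (iii), $\alpha_*$ is not among the atoms, since every $\alpha_n$ lies strictly below it in the $w$-direction. Hence $\mu_\beta(\{\alpha_*\})=0$.

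Next I would identify the exposed face and the depth function. Because $\langle w,\alpha_n\rangle=\langle w,\alpha_*\rangle-2^{-n}<\langle w,\alpha_*\rangle$, the maximum in \eqref{eq:support-function} is attained only at $\alpha_*$. Therefore $E_w=\{\alpha_*\}$, and by \eqref{eq:alpha-star-critical}, $\cH_{\mu_\beta}(w)=2m(w)$, so the critical equality \eqref{eq:critical-equality} holds for both measures. The depth map \eqref{eq:depth-map} takes the value $\Delta_w(\alpha_n)=2^{-n}$. Consequently $\Lambda_{\mu_\beta,w}(s)=Z_\beta^{-1}\sum_n 2^{-\beta n}e^{-s2^{-n}}$, which is exactly the sum treated in Lemma~\ref{lem:dyadic-profile}.

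For (iv), I would combine Lemma~\ref{lem:dyadic-profile} with Theorem~\ref{thm:critical-tail}. One could equally invoke Corollary~\ref{cor:power-profile} with a constant slowly varying function. The lemma gives $\Lambda_{\mu_\beta,w}(s)^{1/2}\asympC s^{-\beta/2}$ for $s\ge2$. The integral $\int^\infty s^{-\beta/2}\,\dd s$ converges exactly when $\beta>2$. It therefore diverges for $\beta_-\le2$, giving infinite length, and converges for $\beta_+>2$, giving finite length. The lower limit $s_0=\log(1/r_0)$ can be taken to be at least $2$ by shrinking $r_0$, which does not affect finiteness.

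The only point needing care is the rescaling of $w$ used to keep the $\alpha_n$ in the nonnegative quadrant. I would check that it changes neither the curve's image nor the criticality relation \eqref{eq:alpha-star-critical}: both sides of that relation are homogeneous of degree one in $w$ once $\alpha_*$ is chosen for the rescaled $w$. The theorem is then stated for the rescaled direction. Once this is settled, the final "consequently" clause is immediate, since the two measures share all support data yet yield opposite lengths.
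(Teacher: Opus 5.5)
Your proposal is correct and follows the paper's proof: (i)--(iii) come from the atomic structure, with $\alpha_*$ a non-atomic accumulation point of the atoms, and (iv) comes from Lemma~\ref{lem:dyadic-profile} combined with the critical-tail criterion (Theorem~\ref{thm:critical-tail}, equivalently Corollary~\ref{cor:power-profile}), which gives finite length exactly when $\beta>2$. You also make explicit some details the paper leaves implicit: $E_w=\{\alpha_*\}$, the depth values $\Delta_w(\alpha_n)=2^{-n}$, and the check that rescaling $w$ preserves both the image of $\gamma_w$ and the critical relation \eqref{eq:alpha-star-critical}.
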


\begin{proof}
Items (i)--(iii) follow directly from
\eqref{eq:dyadic-measure}--\eqref{eq:common-support}.  In particular,
\(\alpha_*\) belongs to the support because every neighborhood of it
contains infinitely many atoms of positive mass, although the point itself
has zero mass.  Equation \eqref{eq:alpha-star-critical} gives the same
critical support value for all \(\beta\).  Lemma~\ref{lem:dyadic-profile}
and Corollary~\ref{cor:power-profile} show that the critical length is
finite exactly when \(\beta>2\).
\end{proof}

\begin{remark}[What has been disproved]
Theorem~\ref{thm:same-support-opposite-length} rules out any critical
classification depending only on the support set or its convex hull.  A
coefficient-sensitive invariant is essential.  The directional Laplace profile is sufficient for the weighted radial
problem by Theorem~\ref{thm:critical-tail}.  For unrestricted local
accessibility and completion, Section~\ref{sec:metric-completion} shows that
the corresponding total-degree profile is decisive.
\end{remark}

\section{Global corner accessibility and finite obstructions}
\label{sec:global}

\subsection{Relation to accessibility theory}

Global accessibility is an infimum-over-curves question rather than a
one-dimensional asymptotic calculation.  The quasihyperbolic literature
provides a classical neighboring theory in which a prescribed conformal
density is studied through lengths of arbitrary curves and the resulting
intrinsic distance \cite{GehringOsgood1979}.  Grushin geometry similarly
shows that anisotropic singular densities can produce non-Euclidean
boundary distance laws \cite{Wu2015}.  These theories supply useful
pathwise comparison principles, but the reviewed works do not formulate a
Newton support function or an exposed-face coefficient tail for an
infinite positive monomial interaction.

Curve-selection methods from semianalytic and subanalytic geometry are
another adjacent language: they often reduce local topological questions
to analytic arcs \cite{BierstoneMilman1988}.  They do not automatically
resolve the present problem.  A general compactly supported exponent
measure need not define a subanalytic conformal factor, and finite metric
length is not merely a topological incidence condition.  We therefore do
not assume that an arbitrary finite-length path can be replaced by a fixed
weighted monomial arc.

The implication from a finite radial curve to global accessibility is
immediate, whereas the converse requires pathwise lower bounds.  We first
prove the finite positive obstruction using the boundary defining function
\(\rho=x+y\).  Section~\ref{sec:metric-completion} then treats the critical
case and shows that the total-degree Laplace profile controls all approaches,
including paths whose projective direction drifts.

\begin{definition}[Global corner accessibility]
\label{def:global-accessibility}
The corner stratum \(C=\{x=y=0\}\) is locally accessible for \(g_\mu\) if
there is a locally absolutely continuous curve
\[
\gamma:[0,1)\longrightarrow U^\circ
\]
with finite \(g_\mu\)-length such that
\[
x(\gamma(t))\longrightarrow0,
\qquad
y(\gamma(t))\longrightarrow0
\]
as \(t\uparrow1\).  Otherwise the corner is locally inaccessible.
\end{definition}

\begin{proposition}[Radial criteria as global sufficient conditions]
\label{prop:radial-global-sufficient}
The corner is locally accessible if at least one of the following holds for
some \(w=(p,q)\in\R_{>0}^2\):
\begin{enumerate}[label=(\roman*)]
\item \(\cH_\mu(w)<2m(w)\);
\item \(\cH_\mu(w)=2m(w)\) and
\[
\int^\infty \Lambda_{\mu,w}(s)^{1/2}\,\dd s<\infty.
\]
\end{enumerate}
\end{proposition}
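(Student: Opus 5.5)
The plan is to exhibit the weighted radial curve \(\gamma_w(r)=(r^p,r^q,z_0)\), for any fixed \(z_0\in V\), as the finite-length curve required by Definition~\ref{def:global-accessibility}. Its length finiteness is already supplied by the radial theorems, so what remains is to check that it fits the definition of local accessibility.

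First, I will fix \(w=(p,q)\) satisfying (i) or (ii) and choose \(r_0\in(0,1)\) small enough that \(r_0^p<\varepsilon\) and \(r_0^q<\varepsilon\). Then \(\gamma_w(r)\in U^\circ\) for \(0<r<r_0\). Under hypothesis (i), Theorem~\ref{thm:strict-gap-radial}(i) gives finite \(g_\mu\)-length of \(\gamma_w\) on \((0,r_0)\). Under hypothesis (ii), Theorem~\ref{thm:critical-tail} gives finite length, because the tail integral \(\int_{s_0}^\infty\Lambda_{\mu,w}(s)^{1/2}\,\dd s\) is finite. Here finiteness of the integral over \([s_0,\infty)\) follows from the assumed finiteness near infinity, since \(\Lambda_{\mu,w}\le\mu(E)\) is bounded on compact intervals.

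Next, I will reparametrize to match the definition. Set \(\tilde\gamma(t)=\gamma_w\bigl(r_0(1-t)/2\bigr)\) for \(t\in[0,1)\), where the factor \(1/2\) keeps the starting point inside the open range \((0,r_0)\). This map is smooth, hence locally absolutely continuous, and takes values in \(U^\circ\). Its length equals the length of \(\gamma_w\) on \((0,r_0/2]\), which is finite by the previous step. As \(t\uparrow1\) we have \(r\downarrow0\), so \(x=r^p\to0\) and \(y=r^q\to0\) because \(p,q>0\).

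There is no substantive obstacle here. The only points needing care are that length is invariant under monotone reparametrization and that the curve stays inside the coordinate patch. Both are routine, so the proposition follows directly from Theorem~\ref{thm:strict-gap-radial} and Theorem~\ref{thm:critical-tail}.
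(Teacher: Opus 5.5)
Your proposal is correct and follows the paper's proof: the weighted radial curve \(\gamma_w\) is the witness, with finite length supplied by Theorem~\ref{thm:strict-gap-radial} in case (i) and by Theorem~\ref{thm:critical-tail} in case (ii). Your extra bookkeeping makes explicit what the paper leaves implicit. This consists of choosing \(r_0\) so that the curve stays in the chart, reparametrizing onto \([0,1)\), and noting that boundedness of \(\Lambda_{\mu,w}\) turns finiteness near infinity into finiteness on \([s_0,\infty)\).
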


\begin{proof}
In case (i), Theorem~\ref{thm:strict-gap-radial} gives a finite-length
weighted radial curve.  In case (ii), the same conclusion follows from
Theorem~\ref{thm:critical-tail}.  Either curve satisfies
Definition~\ref{def:global-accessibility}.
\end{proof}

The converse is more delicate.  We first prove the finite positive model
needed for comparison.

\begin{theorem}[Self-contained finite Newton accessibility criterion]
\label{thm:finite-newton-self-contained}
Let
\[
 g_B=\left(\sum_{i=1}^N d_i
 x^{-\beta_{i1}}y^{-\beta_{i2}}\right)g_0,
 \qquad d_i>0,\quad \beta_i\in\R_{\ge0}^2,
\]
on a punctured codimension-two corner neighborhood, and assume
\(g_0\) is uniformly elliptic.  Put
\[
 \cH_B(w)=\max_{1\le i\le N}\langle w,\beta_i\rangle.
\]
The following are equivalent:
\begin{enumerate}[label=(\roman*)]
\item the corner is locally accessible for \(g_B\);
\item \(\max_i(\beta_{i1}+\beta_{i2})<2\);
\item there is \(w=(p,q)\in\R_{>0}^2\) such that
\[
 \cH_B(w)<2\min\{p,q\}.
\]
\end{enumerate}
Consequently, the corner is locally inaccessible exactly when
\[
 \cH_B(w)\ge2\min\{p,q\}
 \qquad\text{for every }w=(p,q)\in\R_{>0}^2.
\]
\end{theorem}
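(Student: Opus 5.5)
We prove the cycle (iii)\(\Rightarrow\)(i)\(\Rightarrow\)(ii)\(\Rightarrow\)(iii). Write \(A_B=\max_i(\beta_{i1}+\beta_{i2})\) and \(d_{\min}=\min_i d_i\). The finite interaction \(\sum_i d_i\delta_{\beta_i}\) is a finite atomic measure, so the earlier results apply to it.

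\emph{(iii)\(\Rightarrow\)(i).} Suppose some \(w\) satisfies \(\cH_B(w)<2\min\{p,q\}\). Then Theorem~\ref{thm:strict-gap-radial}(i) applies with \(\mu=\sum_i d_i\delta_{\beta_i}\), and Proposition~\ref{prop:radial-global-sufficient} converts the resulting finite-length weighted radial curve into local accessibility.

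\emph{(ii)\(\Rightarrow\)(iii).} Take the diagonal direction \(w=(1,1)\). Then \(\cH_B(1,1)=A_B\) and \(\min\{p,q\}=1\), so \(A_B<2\) is exactly condition (iii) at this \(w\).

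\emph{(i)\(\Rightarrow\)(ii).} This is the main step. We argue by contraposition: assume \(A_B\ge2\) and show that every curve \(\gamma\) tending to the corner has infinite \(g_B\)-length. Choose an index \(i\) with \(\beta_{i1}+\beta_{i2}=A_B\). Use the boundary defining function \(\rho=x+y\). Since \(x,y\le\rho\) on the punctured neighbourhood, and \(\rho<1\) near the corner, we get
\[
x^{-\beta_{i1}}y^{-\beta_{i2}}\ge\rho^{-\beta_{i1}}\rho^{-\beta_{i2}}=\rho^{-A_B}\ge\rho^{-2}.
\]
Here nonnegativity of the exponents is what makes the first inequality go in the right direction. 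Hence the conformal factor satisfies \(F_B\ge d_{\min}\rho^{-2}\).

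Uniform ellipticity of \(g_0\) gives
\[
|\dot\gamma|_{g_0}\ge c_0^{1/2}|\dot\gamma|_{\mathrm E}\ge c\,|\tfrac{\mathrm d}{\mathrm dt}\rho(\gamma(t))|,
\]
because \(\rho\) is Euclidean-Lipschitz. Combining the two bounds,
\[
L_{g_B}(\gamma)\ge c\,d_{\min}^{1/2}\int\frac{|\dot\rho|}{\rho}\,\mathrm dt\ge c'\bigl|\log\rho(\gamma(t_0))-\log\rho(\gamma(t))\bigr|.
\]
The right-hand side tends to \(\infty\) as \(\rho(\gamma(t))\to0\). So every curve reaching the corner has infinite length, and the corner is inaccessible.

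The only point needing care is that a single monomial controls \(F_B\) from below uniformly along an arbitrary path, so no direction analysis is required. In this finite setting the exposed total-degree face always carries an atom, so the borderline case \(A_B=2\) is already covered by the \(\rho^{-2}\) bound. This is exactly the step that fails for measures with zero mass on the critical face.

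\emph{The stated consequence.} This is the negation of (iii). By the equivalence just proved, it is equivalent to the negation of (i).
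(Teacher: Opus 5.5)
Your proof is correct. Its main step, the obstruction (i)$\Rightarrow$(ii) via $\rho=x+y$, is the paper's argument with one small simplification: you restrict to the tail where $\rho<1$ and bound $\rho^{-A_B}\ge\rho^{-2}$, so only the logarithmic primitive is needed. The paper instead treats $a+b=2$ and $a+b>2$ with separate primitives.

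The genuine difference is in how you close the equivalences. The paper proves (ii)$\Rightarrow$(i) directly with the diagonal curve. It proves (iii)$\Rightarrow$(ii) by the elementary inequality $\langle w,\beta_j\rangle\ge\min\{p,q\}(\beta_{j1}+\beta_{j2})$, so it uses no earlier result. You run the cycle (iii)$\Rightarrow$(i)$\Rightarrow$(ii)$\Rightarrow$(iii). You obtain (iii)$\Rightarrow$(i) from Theorem~\ref{thm:strict-gap-radial}(i) and Proposition~\ref{prop:radial-global-sufficient}, applied to the finite atomic measure $\sum_i d_i\delta_{\beta_i}$ with $u\equiv1$. This is legitimate: those results do not depend on the present theorem, and $\cH_\mu=\cH_B$ for this measure. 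It is also shorter given the earlier sections.

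What your route gives up is that it establishes the purely algebraic fact indirectly. That fact is that no weighted direction can satisfy (iii) once $A_B\ge2$. You get it only by building a finite-length curve and then contradicting the metric obstruction. The paper's inequality shows directly that $\cH_B(w)-2\min\{p,q\}\ge\min\{p,q\}(A_B-2)$, with equality at $w=(1,1)$. So the diagonal is the optimal test direction among all weighted rays, and the finite criterion stands independently of the measure-theoretic framework.
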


\begin{proof}
Assume first that
\(A:=\max_i(\beta_{i1}+\beta_{i2})<2\).  Along the diagonal curve
\(x=y=r\), uniform ellipticity and positivity give
\[
 |\dot\gamma(r)|_{g_B}
 \lesssim \left(\sum_{i=1}^N d_i r^{-(\beta_{i1}+\beta_{i2})}\right)^{1/2}
 \lesssim r^{-A/2}.
\]
Since \(A/2<1\), the diagonal has finite length.  Thus (ii) implies (i).

Conversely, suppose that some exponent \(\beta_j=(a,b)\) satisfies
\(a+b\ge2\).  Let \(\gamma(t)=(x(t),y(t),z(t))\) be any locally
absolutely continuous curve approaching the corner, and set
\(\rho(t)=x(t)+y(t)\).  After shrinking the neighborhood, assume
\(0<x,y<1\).  Since \(x,y\le\rho\) and \(a,b\ge0\),
\[
 x^{-a}y^{-b}\ge \rho^{-(a+b)}.
\]
Uniform ellipticity and
\(\sqrt{|\dot x|^2+|\dot y|^2}\ge |\dot\rho|/\sqrt2\) yield
\[
 L_{g_B}(\gamma)
 \ge C\int \rho(t)^{-(a+b)/2}|\dot\rho(t)|\,\dd t
\]
for a constant \(C>0\).  If \(a+b=2\), apply the chain rule to
\(-\log\rho\); if \(a+b>2\), apply it to
\(\rho^{1-(a+b)/2}/(1-(a+b)/2)\).  In either case the resulting
primitive tends to \(+\infty\) as \(\rho\to0\).  Hence every
corner-approaching curve has infinite length, proving (i) implies (ii).

It remains to compare (ii) and (iii).  If (ii) holds, choose \(w=(1,1)\);
then
\[
 \cH_B(1,1)=\max_i(\beta_{i1}+\beta_{i2})<2.
\]
Conversely, if some \(\beta_j\) has coordinate sum at least \(2\), then
for every \(w=(p,q)\in\R_{>0}^2\),
\[
 \langle w,\beta_j\rangle
 \ge\min\{p,q\}(\beta_{j1}+\beta_{j2})
 \ge2\min\{p,q\},
\]
so (iii) is impossible.  This proves all equivalences.
\end{proof}

\begin{proposition}[Finite Newton obstruction certificate]
\label{prop:finite-obstruction}
Suppose there are exponent vectors
\(\beta_1,\ldots,\beta_N\in\R_{\ge0}^2\) and constants \(d_i>0\) such that
on a punctured corner neighborhood
\begin{equation}\label{eq:finite-obstruction-lower-bound}
F_\mu(x,y,z)\ge
\sum_{i=1}^N d_i x^{-\beta_{i1}}y^{-\beta_{i2}}.
\end{equation}
Let
\[
\cH_B(w)=\max_{1\le i\le N}\langle w,\beta_i\rangle.
\]
If
\begin{equation}\label{eq:finite-obstruction-condition}
\cH_B(w)\ge2\min\{p,q\}
\qquad\text{for every }w=(p,q)\in\R_{>0}^2,
\end{equation}
then the corner is locally inaccessible for \(g_\mu\).
\end{proposition}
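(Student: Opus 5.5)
The plan is to reduce the proposition to Theorem~\ref{thm:finite-newton-self-contained} by comparing lengths. First I would translate the hypothesis \eqref{eq:finite-obstruction-condition} into the combinatorial condition (ii) of that theorem. The equivalence of (ii) and (iii) there shows that \eqref{eq:finite-obstruction-condition} holding for every \(w\) forces some \(\beta_j\) to satisfy \(\beta_{j1}+\beta_{j2}\ge2\). To see this, suppose instead that every \(\beta_i\) had coordinate sum \(<2\). Taking \(w=(1,1)\) would then give \(\cH_B(1,1)<2=2\min\{1,1\}\), which contradicts \eqref{eq:finite-obstruction-condition}. So the finite comparison metric
\[
g_B=\Bigl(\sum_{i=1}^N d_i x^{-\beta_{i1}}y^{-\beta_{i2}}\Bigr)g_0
\]
fails condition (ii), and by Theorem~\ref{thm:finite-newton-self-contained} its corner is locally inaccessible.

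The second step is monotonicity of length under pointwise domination of conformal factors. By \eqref{eq:finite-obstruction-lower-bound}, we have \(g_\mu=F_\mu g_0\ge g_B\) as quadratic forms on the punctured neighbourhood. Hence for every locally absolutely continuous curve \(\gamma\) in \(U^\circ\),
\[
L_{g_\mu}(\gamma)=\int \sqrt{F_\mu}\,|\dot\gamma|_{g_0}\,\dd t\ \ge\ \int\Bigl(\sum_i d_i x^{-\beta_{i1}}y^{-\beta_{i2}}\Bigr)^{1/2}|\dot\gamma|_{g_0}\,\dd t=L_{g_B}(\gamma).
\]
Now take any curve \(\gamma:[0,1)\to U^\circ\) with \(x,y\to0\). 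It has infinite \(g_B\)-length, so it also has infinite \(g_\mu\)-length. Therefore no curve as in Definition~\ref{def:global-accessibility} exists.

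The only point needing care is that the two neighbourhoods match. Theorem~\ref{thm:finite-newton-self-contained} shrinks the neighbourhood so that \(0<x,y<1\), while \eqref{eq:finite-obstruction-lower-bound} holds on some punctured corner neighbourhood. I would handle this by restricting to the intersection. Any curve approaching the corner eventually stays in that intersection, so its tail alone has infinite length.

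Alternatively, one could bypass the theorem and rerun its pathwise argument directly with the single monomial \(d_jx^{-a}y^{-b}\), \(a+b\ge2\). That argument bounds \(F_\mu\ge d_j\rho^{-(a+b)}\) with \(\rho=x+y\) and integrates \(\rho^{-(a+b)/2}|\dot\rho|\), which diverges as \(\rho\to0\). I expect no real obstacle here: the substantive content lies entirely in the finite theorem already proved.
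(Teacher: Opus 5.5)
Your proposal is correct and follows the paper's proof. The hypothesis $F_\mu\ge\sum_i d_ix^{-\beta_{i1}}y^{-\beta_{i2}}$ gives $g_\mu\ge g_B$, hence $L_{g_\mu}(\gamma)\ge L_{g_B}(\gamma)$ for every curve, and Theorem~\ref{thm:finite-newton-self-contained} then makes every corner-approaching curve infinitely long for $g_B$; you unpack that theorem correctly via $w=(1,1)$ to get some $\beta_j$ with $\beta_{j1}+\beta_{j2}\ge2$, and your restriction to the intersection of the two neighbourhoods (using only the tail of the curve) is a harmless detail the paper leaves implicit.
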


\begin{proof}
Let
\[
g_B=\left(\sum_{i=1}^N d_i
x^{-\beta_{i1}}y^{-\beta_{i2}}\right)g_0.
\]
Equation~\eqref{eq:finite-obstruction-lower-bound} gives
\(g_\mu\ge g_B\) as quadratic forms, hence
\(L_{g_\mu}(\gamma)\ge L_{g_B}(\gamma)\) for every curve.
Condition~\eqref{eq:finite-obstruction-condition} and
Theorem~\ref{thm:finite-newton-self-contained} imply that every
corner-approaching curve has infinite \(g_B\)-length.  The same is therefore
true for \(g_\mu\).
\end{proof}

A uniform gap in the infinite support function automatically supplies such
a finite certificate.

\begin{remark}[Finite obstruction versus the final classification]
The finite obstruction certificate above is retained because it isolates a
useful comparison mechanism.  A separate projective compactness argument can
also produce such certificates under a uniform supercritical gap.  That
intermediate result is omitted here, since the total-degree argument in
Section~\ref{sec:metric-completion} yields the stronger and exact local
completion classification directly.
\end{remark}

\section{Local metric completion}
\label{sec:metric-completion}

Accessibility only asserts the existence of one finite-length approach.  We now
address the stronger completion question: whether arbitrary interior sequences
approaching the same Euclidean corner point are Cauchy and mutually equivalent.
The first complete answer is available under a uniform total-degree gap.

Put
\begin{equation}\label{eq:max-total-degree}
 A_\mu:=\max_{\alpha\in E}(\alpha_1+\alpha_2).
\end{equation}
Because the exponent support is compact, this maximum is finite and attained.
Throughout this section, fix a relatively compact convex tangential coordinate
ball $B\Subset V$ and work in a smaller product neighborhood
\[
 U_B=(0,\varepsilon)^2\times B.
\]
The intrinsic distance induced by $g_\mu$ on $U_B$ is denoted by $d_\mu$.

\begin{definition}[Local completion fibre]
\label{def:local-completion-fibre}
Let \(\overline{U_B}^{\,d_\mu}\) be the metric completion.  By
Lemma~\ref{lem:euclidean-lower-control} below, every \(d_\mu\)-Cauchy
sequence has a unique Euclidean limit.  For
\(p\in\{x=y=0\}\times B\), the \emph{local completion fibre over \(p\)}
is the set of completion classes represented by \(d_\mu\)-Cauchy sequences
whose Euclidean limit is \(p\).
\end{definition}

\begin{lemma}[Euclidean lower control]
\label{lem:euclidean-lower-control}
After shrinking $\varepsilon$ below $1$, there is $c>0$ such that
\[
 d_\mu(P,Q)\ge c\,|P-Q|_{\mathrm E}
\]
for all $P,Q\in U_B$.
Consequently every $d_\mu$-Cauchy sequence has a unique Euclidean limit in
$[0,\varepsilon]^2\times\overline B$.
\end{lemma}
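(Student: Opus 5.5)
The plan is to bound the conformal factor from below by a positive constant on \(U_B\), so that \(g_\mu\) dominates a fixed multiple of the Euclidean metric; every path length, and hence \(d_\mu\), then dominates a fixed multiple of Euclidean length.

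\emph{Step 1 (lower bound on the density).} After shrinking \(\varepsilon\) below \(1\), we have \(0<x,y<1\) on \(U_B\). Since every exponent \(\alpha\in E\subset\R_{\ge0}^2\), this gives \(x^{-\alpha_1}y^{-\alpha_2}\ge1\) for all \(\alpha\in E\). Integrating over \(E\) yields \(F_\mu^0\ge\mu(E)>0\). By \eqref{eq:frozen-comparison},
\[
F_\mu\ge c_u\,\mu(E).
\]

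\emph{Step 2 (comparison of path lengths).} Combining Step~1 with \eqref{eq:background-ellipticity} gives
\[
g_\mu\ge c_u\,\mu(E)\,c_0\,g_{\mathrm E}
\]
as quadratic forms on \(U_B\). Hence every locally absolutely continuous path in \(U_B\) from \(P\) to \(Q\) has \(g_\mu\)-length at least \(c\) times its Euclidean length, where \(c=(c_u\mu(E)c_0)^{1/2}\). Its Euclidean length is in turn at least \(|P-Q|_{\mathrm E}\). Taking the infimum over such paths gives
\[
d_\mu(P,Q)\ge c\,|P-Q|_{\mathrm E}.
\]
Convexity of \(B\) plays no role in this inequality; it is used only elsewhere, for upper bounds.

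\emph{Step 3 (Euclidean limits of Cauchy sequences).} Let \((P_n)\) be a \(d_\mu\)-Cauchy sequence. By Step~2 it is Euclidean Cauchy, so it converges in \(\R^{2+k}\). The limit lies in the closure of \(U_B\), which is contained in \([0,\varepsilon]^2\times\overline B\). The limit is unique because Euclidean limits are unique. Equivalent Cauchy sequences, meaning those with \(d_\mu(P_n,Q_n)\to0\), have the same Euclidean limit by the same inequality, so the limit is well defined on completion classes.

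There is no real obstacle here; the only point needing care is the shrinking \(\varepsilon<1\), which makes every monomial at least \(1\).
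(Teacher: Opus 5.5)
Your proposal is correct and follows essentially the same argument as the paper. Both take $\varepsilon<1$ to get $x^{-\alpha_1}y^{-\alpha_2}\ge1$, deduce $F_\mu\ge c_u\mu(E)$, combine this with the lower ellipticity of $g_0$ to obtain $g_\mu\ge c^2g_{\mathrm E}$, and take infima of curve lengths. Cauchy sequences are then Euclidean Cauchy, which gives the Euclidean limit.
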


\begin{proof}
For $0<x,y<1$ and $\alpha\in E\subset\mathbb R_{\ge0}^2$ one has
$x^{-\alpha_1}y^{-\alpha_2}\ge1$.  Hence
\[
 F_\mu(x,y,z)\ge c_u\mu(E)>0.
\]
Together with the lower ellipticity bound for $g_0$, this gives
$g_\mu\ge c^2g_{\mathrm E}$ for a uniform $c>0$.  Taking infima of curve
lengths proves the distance inequality.  A $d_\mu$-Cauchy sequence is therefore
Euclidean Cauchy, and uniqueness of its Euclidean limit follows.
\end{proof}

\begin{proposition}[Marginal Lipschitz completion coordinates]
\label{prop:marginal-completion-coordinates}
For \(0<t<1\), define the marginal exponent profiles
\[
 M_x(t):=\int_E t^{-\alpha_1}\,\dd\mu(\alpha),
 \qquad
 M_y(t):=\int_E t^{-\alpha_2}\,\dd\mu(\alpha).
\]
Whenever
\[
 \int_0^\varepsilon \sqrt{M_x(t)}\,\dd t<\infty,
 \qquad
 \int_0^\varepsilon \sqrt{M_y(t)}\,\dd t<\infty,
\]
put
\[
 \Xi_x(x):=\int_0^x\sqrt{M_x(t)}\,\dd t,
 \qquad
 \Xi_y(y):=\int_0^y\sqrt{M_y(t)}\,\dd t.
\]
Then \(\Xi_x\) and \(\Xi_y\), viewed as functions on \(U_B\), are
uniformly Lipschitz with respect to \(d_\mu\).  More precisely,
\[
 |\dd\Xi_x(v)|\le \frac{1}{\sqrt{c_0c_u}}\,|v|_{g_\mu},
 \qquad
 |\dd\Xi_y(v)|\le \frac{1}{\sqrt{c_0c_u}}\,|v|_{g_\mu}.
\]
Hence both functions extend uniquely and continuously to
\(\overline{U_B}^{\,d_\mu}\).

If \(A_\mu\le2\) and
\[
 \int^\infty\sqrt{\mathcal T_\mu(S)}\,\dd S<\infty,
\]
then the two marginal integrability conditions above hold automatically.
In this regime the extensions of \(\Xi_x,\Xi_y\) vanish simultaneously
precisely over the Euclidean corner stratum.
\end{proposition}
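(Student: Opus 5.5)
The plan is to prove a pointwise bound on the differential of \(\Xi_x\) (and symmetrically \(\Xi_y\)) by comparing the marginal profile with the frozen factor, integrate it along curves, and then read off the remaining assertions from the diagonal computation and Lemma~\ref{lem:euclidean-lower-control}.

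\emph{Step 1: differential bound.} On \(U_B\) we have \(0<x,y<\varepsilon<1\), so for every \(\alpha\in E\) one has \(y^{-\alpha_2}\ge1\), hence \(x^{-\alpha_1}\le x^{-\alpha_1}y^{-\alpha_2}\). Integrating against \(\mu\) gives
\[
M_x(x)\le F_\mu^0(x,y)\le c_u^{-1}F_\mu(x,y,z),
\]
where the last inequality is \eqref{eq:frozen-comparison}. Since \(\dd\Xi_x(v)=\sqrt{M_x(x)}\,v_x\), and \(|v_x|\le|v|_{\mathrm E}\le c_0^{-1/2}|v|_{g_0}\) by \eqref{eq:background-ellipticity}, we obtain
\[
|\dd\Xi_x(v)|\le \sqrt{F_\mu/c_u}\;c_0^{-1/2}|v|_{g_0}=\frac{1}{\sqrt{c_0c_u}}\,|v|_{g_\mu}.
\]
The same argument with the roles of \(x\) and \(y\) exchanged handles \(\Xi_y\). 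The hypothesis \(\int_0^\varepsilon\sqrt{M_x}<\infty\) makes \(\Xi_x\) a well-defined \(C^1\) function on \((0,\varepsilon)\), and indeed continuous on \([0,\varepsilon)\).

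\emph{Step 2: Lipschitz and extension.} Integrate the differential bound along any locally absolutely continuous curve in \(U_B\) joining \(P\) to \(Q\), then take the infimum over such curves. This gives
\[
|\Xi_x(P)-\Xi_x(Q)|\le (c_0c_u)^{-1/2}d_\mu(P,Q).
\]
Uniformly Lipschitz functions map Cauchy sequences to Cauchy sequences, and they take equal limits on equivalent sequences. Hence \(\Xi_x\) and \(\Xi_y\) extend uniquely and continuously (indeed Lipschitz) to \(\overline{U_B}^{\,d_\mu}\).

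\emph{Step 3: automatic integrability.} For \(0<t<1\) we have \(t^{-\alpha_1}\le t^{-\alpha_1-\alpha_2}\), so \(M_x(t)\le F_\mu^0(t,t)\). With \(t=e^{-S}\),
\[
F_\mu^0(e^{-S},e^{-S})=\int_E e^{S(\alpha_1+\alpha_2)}\,\dd\mu=e^{2S}\mathcal T_\mu(S),
\]
and therefore
\[
\sqrt{M_x(t)}\,\dd t\le e^{S}\sqrt{\mathcal T_\mu(S)}\,e^{-S}\,\dd S=\sqrt{\mathcal T_\mu(S)}\,\dd S.
\]
Consequently \(\int_0^\varepsilon\sqrt{M_x}\le\int_{\log(1/\varepsilon)}^\infty\sqrt{\mathcal T_\mu}<\infty\), and the same holds for \(M_y\). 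This is just the statement that the diagonal has finite length. The hypothesis \(A_\mu\le2\) is not used in the estimate itself; it only places us in the regime where the tail condition can hold.

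\emph{Step 4: vanishing locus.} This is the only point requiring care. Let a completion class be represented by a \(d_\mu\)-Cauchy sequence \(P_n\). By Lemma~\ref{lem:euclidean-lower-control}, \(P_n\) has a Euclidean limit \((x_0,y_0,z_0)\). Since \(\Xi_x\) is continuous on \([0,\varepsilon]\) as a function of \(x\) alone, the extended value is \(\lim\Xi_x(x(P_n))=\Xi_x(x_0)\), and likewise for \(\Xi_y\). Moreover \(M_x(t)\ge\mu(E)>0\), so \(\Xi_x\) is strictly increasing with \(\Xi_x(0)=0\); thus \(\Xi_x(x_0)=0\) if and only if \(x_0=0\). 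It follows that both extensions vanish exactly when \(x_0=y_0=0\), i.e.\ exactly on the classes lying over the corner stratum. In particular, classes over a single face \(\{x=0\}\) or \(\{y=0\}\) away from the corner are distinguished by the nonvanishing of the other coordinate.
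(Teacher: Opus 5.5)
Your proposal is correct and follows essentially the paper's argument. The ingredients match: the lower bound \(F_\mu\ge c_uM_x(x)\) from \(y^{-\alpha_2}\ge1\) combined with \(g_0\ge c_0g_{\mathrm E}\); the domination \(M_x(t),M_y(t)\le t^{-2}\mathcal T_\mu(\log(1/t))\); and strict positivity of the marginals together with Lemma~\ref{lem:euclidean-lower-control} for the vanishing locus. Your Step 4 also makes explicit a point the paper leaves implicit: the extended value of \(\Xi_x\) on a completion class equals \(\Xi_x(x_0)\), by continuity of \(\Xi_x\) on \([0,\varepsilon]\).
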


\begin{proof}
Since \(0<y<1\) and \(\alpha_2\ge0\),
\[
 F_\mu(x,y,z)
 \ge
 c_u\int_E x^{-\alpha_1}y^{-\alpha_2}\,\dd\mu(\alpha)
 \ge
 c_u M_x(x).
\]
Together with \(g_0\ge c_0g_{\mathrm E}\), this gives
\[
 |v|_{g_\mu}
 \ge
 \sqrt{c_0c_uM_x(x)}\,|v|_{\mathrm E}
 \ge
 \sqrt{c_0c_uM_x(x)}\,|v_x|.
\]
Therefore
\[
 |\dd\Xi_x(v)|
 =
 \sqrt{M_x(x)}\,|v_x|
 \le
 \frac{1}{\sqrt{c_0c_u}}\,|v|_{g_\mu}.
\]
The proof for \(\Xi_y\) is identical.  The Lipschitz extensions to the
metric completion follow from completeness of \(\mathbb R\).

Now assume \(A_\mu\le2\).  Since
\(\alpha_1,\alpha_2\le\alpha_1+\alpha_2\) and \(0<t<1\),
\[
 M_x(t),M_y(t)
 \le
 \int_E t^{-(\alpha_1+\alpha_2)}\,\dd\mu(\alpha)
 =
 t^{-2}\mathcal T_\mu(\log(1/t)).
\]
Consequently,
\[
 \int_0^\varepsilon\sqrt{M_x(t)}\,\dd t,
 \quad
 \int_0^\varepsilon\sqrt{M_y(t)}\,\dd t
 \le
 \int_{\log(1/\varepsilon)}^\infty
 \sqrt{\mathcal T_\mu(S)}\,\dd S<\infty.
\]
Because \(\mu\neq0\), both marginal profiles are strictly positive, so
\(\Xi_x(x)=0\) exactly when \(x=0\), and similarly for \(y\).
Lemma~\ref{lem:euclidean-lower-control} gives a unique Euclidean limit to
each completion-Cauchy sequence.  Hence the two extended coordinates vanish
simultaneously exactly on completion classes whose Euclidean limit has
\(x=y=0\).
\end{proof}

\begin{remark}[Relation with explicit power coordinates]
\label{rem:power-coordinate-prototype}
For a finite two-term Liouville model
\[
 F(x,y)=x^{-a}+y^{-b},
 \qquad 0<a,b<2,
\]
the marginal coordinates above satisfy
\[
 \Xi_x(x)\asymp x^{1-a/2},
 \qquad
 \Xi_y(y)\asymp y^{1-b/2}
\]
as \(x,y\downarrow0\).  In the two-term singular Liouville model,
these explicit power coordinates are used to describe the finite metric
completion and its boundary in the publicly available companion work
\cite{ZanalAbidinLiouville2026}.  Thus they provide the atomic prototype of
the marginal exponent-measure construction above.  The present proposition
abstracts only this completion-coordinate mechanism; it does not use the
Liouville first integral, separated geodesic quadrature, global foliation, or
Hamilton--Jacobi calibration developed in that work.  The proposition is
used only as a geometric description of the completion and is not needed for
the support-and-tail classification theorem itself.
\end{remark}

\begin{lemma}[Cauchy sequence--finite curve principle]
\label{lem:cauchy-finite-curve}
Let \(p\in\{x=y=0\}\times B\).  The local completion fibre over \(p\)
is nonempty if and only if there exists an absolutely continuous curve in
\(U_B\) of finite \(g_\mu\)-length whose Euclidean limit at one endpoint is
\(p\).
\end{lemma}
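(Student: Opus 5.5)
The plan is to prove the two directions separately; both are short once the completion is described by Cauchy sequences and Lemma~\ref{lem:euclidean-lower-control} is used to locate Euclidean limits.

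\emph{Curve $\Rightarrow$ nonempty fibre.}
Let $\gamma:[0,1)\to U_B$ be absolutely continuous with finite $g_\mu$-length $L$ and Euclidean limit $p$ as $t\uparrow1$. Choose $t_n\uparrow1$ and set $P_n=\gamma(t_n)$. Then
\[
 d_\mu(P_n,P_m)\le L_{g_\mu}\bigl(\gamma|_{[t_n,t_m]}\bigr)=\int_{t_n}^{t_m}|\dot\gamma|_{g_\mu}\,\dd t\longrightarrow0,
\]
because the tail of a convergent integral tends to zero. So $(P_n)$ is $d_\mu$-Cauchy, and its Euclidean limit is $p$. It therefore defines a completion class in the fibre over $p$.

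\emph{Nonempty fibre $\Rightarrow$ curve.}
Let $(P_n)\subset U_B$ be $d_\mu$-Cauchy with Euclidean limit $p$. Pass to a subsequence with $d_\mu(P_n,P_{n+1})<2^{-n}$. Since $d_\mu$ is the infimum of $g_\mu$-lengths of curves in $U_B$, choose for each $n$ a piecewise $C^1$ curve $\sigma_n\subset U_B$ from $P_n$ to $P_{n+1}$ with $L_{g_\mu}(\sigma_n)<2^{-n}$. Parametrize $\sigma_n$ on $[1-2^{-n+1},1-2^{-n}]$ and concatenate to get $\gamma:[0,1)\to U_B$. This curve is locally absolutely continuous, and its total length is at most $\sum_n 2^{-n}<\infty$.

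It remains to show that $\gamma(t)\to p$ in the Euclidean sense. This is the one step needing care, since finite length controls only $d_\mu$, not Euclidean position, a priori. Lemma~\ref{lem:euclidean-lower-control} resolves it: $g_\mu\ge c^2g_{\mathrm E}$ implies $L_{\mathrm E}(\sigma_n)\le c^{-1}2^{-n}$. Hence every point of $\sigma_n$ lies within Euclidean distance $c^{-1}2^{-n}$ of $P_n$. Because $P_n\to p$, it follows that $\gamma(t)\to p$ as $t\uparrow1$. If the intended notion of ``curve in $U_B$'' requires the parameter domain $[0,1)$ with a single open endpoint, this construction already has that form.

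A minor point: the definition of $d_\mu$ on $U_B$ uses curves inside $U_B$, and the near-optimal $\sigma_n$ are chosen inside $U_B$, so no leakage across $\partial B$ occurs. Convexity of $B$ is not even needed here.
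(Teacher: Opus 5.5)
Your proposal is correct and follows essentially the same route as the paper. Sampling a finite-length curve gives a $d_\mu$-Cauchy sequence; conversely, you join a fast Cauchy subsequence by near-optimal curves, and the concatenation has finite length and, by Lemma~\ref{lem:euclidean-lower-control}, converges Euclideanly to $p$. The only cosmetic difference is that you take the connecting curves of length below $2^{-n}$, using the strict inequality, whereas the paper allows length below $2^{-n}+2^{-2n}$.
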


\begin{proof}
A finite-length curve is Cauchy near its endpoint, so sampling it along a
sequence of endpoint parameters gives a \(d_\mu\)-Cauchy sequence converging
Euclideanly to \(p\).

Conversely, let \((P_n)\) be a \(d_\mu\)-Cauchy sequence with Euclidean
limit \(p\).  Pass to a subsequence, still denoted \((P_n)\), such that
\(d_\mu(P_n,P_{n+1})<2^{-n}\).  By the definition of intrinsic distance,
choose an absolutely continuous curve \(\gamma_n\) from \(P_n\) to
\(P_{n+1}\) with length less than \(2^{-n}+2^{-2n}\).  Concatenating the
\(\gamma_n\) gives a curve of finite total length.  Lemma~\ref{lem:euclidean-lower-control}
shows that every point on the tail of this concatenation lies Euclideanly
close to \(p\): its Euclidean distance from \(P_n\) is bounded by a
constant times the remaining metric length.  Hence the concatenated curve
converges Euclideanly to \(p\).
\end{proof}

\begin{lemma}[Subcritical corner-diameter estimate]
\label{lem:subcritical-diameter}
Assume $A_\mu<2$ and put
\[
 \theta=1-\frac{A_\mu}{2}>0.
\]
For every compact ball $B'\Subset B$ there are constants $C,r_0>0$ with the
following property.  If $z_0\in B'$ and
\[
 P=(x,y,z)\in U_B,\qquad
 R:=\max\{x,y,|z-z_0|\}<r_0,
\]
then $P$ can be joined to $(R,R,z_0)$ by a curve in $U_B$ of
$g_\mu$-length at most
\[
 C R^\theta.
\]
The constants are uniform for $z_0\in B'$.
\end{lemma}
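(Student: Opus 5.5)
The plan is to build an explicit three-piece curve from $P$ to $(R,R,z_0)$ and bound the $g_\mu$-length of each piece using a single pointwise density bound. The key estimate is that on $U_B$, with $\varepsilon<1$, every point with $x,y\ge t$ satisfies
\[
 F_\mu(x,y,z)\le C_u\int_E x^{-\alpha_1}y^{-\alpha_2}\,\dd\mu(\alpha)\le C_u\mu(E)\,t^{-A_\mu}.
\]
This holds because $x^{-\alpha_1}y^{-\alpha_2}\le t^{-(\alpha_1+\alpha_2)}\le t^{-A_\mu}$ when $t<1$. Combined with $g_0\le C_0g_{\mathrm E}$, it gives
\[
 |v|_{g_\mu}\le K\,t^{-A_\mu/2}|v|_{\mathrm E},
 \qquad K=\sqrt{C_0C_u\mu(E)},
\]
at any such point. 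We choose $r_0<\varepsilon/2$ small enough that the closed Euclidean $r_0$-neighbourhood of $B'$ lies in $B$. By convexity of $B$, all tangential segments used below then stay in $B$, which makes the constants uniform in $z_0\in B'$.

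The curve is built in three steps. Write $P=(x,y,z)$ with $R=\max\{x,y,|z-z_0|\}$.

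\textbf{Step 1.} Move $x$ monotonically from $x$ up to $R$, keeping $y$ and $z$ fixed. Along this piece the coordinate $x$ takes values $\tau\in[x,R]$, so the density bound with $t$ replaced by the smaller coordinate does not directly help: $y$ may be tiny and is frozen. This is the main obstacle. The fix is to use the cruder bound $x^{-\alpha_1}y^{-\alpha_2}\le \min\{x,y\}^{-A_\mu}$ and to order the moves so that the variable being moved is always the smaller one.

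\textbf{Step 2.} Concretely, suppose $x\le y$. First raise $x$ from $x$ to $y$ at fixed $y$. At parameter $\tau\in[x,y]$ the minimum coordinate is $\tau$, so this piece has length at most
\[
 K\int_x^y\tau^{-A_\mu/2}\,\dd\tau\le \frac{K}{\theta}\,y^\theta.
\]
Next move along the diagonal $x=y=\tau$ from $\tau=y$ to $\tau=R$. This piece has length at most $K\int_y^R\tau^{-A_\mu/2}\,\dd\tau\le (K/\theta)R^\theta$. The case $y<x$ is symmetric.

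\textbf{Step 3.} Finally, at $x=y=R$, move $z$ along the straight segment to $z_0$. This segment has Euclidean length $|z-z_0|\le R$ and stays in $B$ by the choice of $r_0$. On it the density is at most $K R^{-A_\mu/2}$, so its length is at most $K R^{1-A_\mu/2}=KR^\theta$.

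Summing the three pieces gives length at most $(2K/\theta+K)R^\theta=:CR^\theta$. The first two pieces depend only on $A_\mu$ and the global constants, so uniformity in $z_0\in B'$ comes entirely from the containment of the tangential segment in $B$. Every point of the curve has $x,y\in(0,R]\subset(0,\varepsilon)$, so the whole curve stays in $U_B$.
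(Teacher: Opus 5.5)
Your proof is correct and follows the paper's own argument: raise the smaller normal coordinate to the larger one at fixed $z$, go up the diagonal to height $R$, then cross tangentially at $(R,R)$, bounding each piece by $F_\mu\lesssim\min\{x,y\}^{-A_\mu}$. Your ``Step 1'' is only a discarded false start and could be removed from a final write-up. Convexity of $B$, with $z,z_0\in B$, already keeps the tangential segment inside $B$, so the role of $r_0$ is just to keep $R<\varepsilon<1$.
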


\begin{proof}
The coefficient comparison and finiteness of $\mu$ give a constant $C_1$ such
that, whenever $0<s\le t<1$,
\begin{equation}\label{eq:ordered-coordinate-bound}
 F_\mu^0(s,t)
 =\int_Es^{-\alpha_1}t^{-\alpha_2}\,\dd\mu(\alpha)
 \le \mu(E)s^{-A_\mu},
\end{equation}
and the same estimate holds with the two coordinates interchanged.  Indeed,
$t^{-\alpha_2}\le s^{-\alpha_2}$ and
$\alpha_1+\alpha_2\le A_\mu$.  Thus
$F_\mu\le C_1s^{-A_\mu}$ on such an ordered coordinate segment.

Let $m=\min\{x,y\}$ and $M=\max\{x,y\}$.  First increase the smaller normal
coordinate from $m$ to $M$, keeping the larger coordinate and $z$ fixed.
By \eqref{eq:ordered-coordinate-bound}, uniform ellipticity, and
$A_\mu<2$, the length is bounded by
\[
 C_2\int_m^M s^{-A_\mu/2}\,\dd s
 \le C_3M^\theta\le C_3R^\theta.
\]
Next move diagonally from $(M,M,z)$ to $(R,R,z)$.  Along the diagonal,
$F_\mu(t,t)\le C_1t^{-A_\mu}$, so this segment has length at most
$C_4R^\theta$.  Finally move tangentially from $z$ to $z_0$ at the fixed
normal point $(R,R)$.  Its length is at most
\[
 C_5R^{-A_\mu/2}|z-z_0|
 \le C_5R^\theta.
\]
For $r_0$ sufficiently small the constructed path remains in $U_B$ uniformly
for $z_0\in B'$.  Summing the three estimates proves the claim.
\end{proof}

\begin{theorem}[Subcritical local metric-completion theorem]
\label{thm:subcritical-completion}
Assume
\begin{equation}\label{eq:uniform-total-subcriticality}
 A_\mu=\max_{\alpha\in E}(\alpha_1+\alpha_2)<2.
\end{equation}
Then the following hold locally along the corner stratum.
\begin{enumerate}[label=(\roman*)]
\item For every $z_0\in B$, every sequence
$P_n=(x_n,y_n,z_n)\in U_B$ converging Euclideanly to $(0,0,z_0)$ is
$d_\mu$-Cauchy.
\item Any two such sequences are equivalent in the metric completion.
Thus there is exactly one completion point, denoted $\widehat z_0$, lying over
each Euclidean corner point $(0,0,z_0)$.
\item Distinct tangential points give distinct completion points.  More
precisely, on every $B'\Subset B$ there are $c,C,r_0>0$ such that, whenever
$z,z'\in B'$ and $|z-z'|<r_0$,
\begin{equation}\label{eq:boundary-holder-comparison}
 c|z-z'|
 \le
 \widehat d_\mu(\widehat z,\widehat z')
 \le
 C|z-z'|^{1-A_\mu/2},
\end{equation}
where $\widehat d_\mu$ is the completed distance.
\item The map
\[
 z\longmapsto\widehat z
\]
is a homeomorphic embedding of $B$ onto the portion of the metric boundary
lying over the corner stratum.  In particular, no projective-direction
splitting occurs in the uniformly subcritical regime.
\end{enumerate}
\end{theorem}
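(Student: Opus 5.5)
The plan is to build everything on Lemma~\ref{lem:subcritical-diameter}, using Lemma~\ref{lem:euclidean-lower-control} for the lower bounds. Fix $B'\Subset B$ containing $z_0$ in its interior, and let $C,r_0,\theta=1-A_\mu/2$ be the constants of that lemma.

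For (i), let $P_n\to(0,0,z_0)$ Euclideanly and set $R_n=\max\{x_n,y_n,|z_n-z_0|\}\to0$. The lemma joins $P_n$ to the diagonal point $D(R_n)=(R_n,R_n,z_0)$ with length at most $CR_n^\theta$. Two diagonal points $D(R)$ and $D(R')$, with $R'<R$, are joined along the diagonal. There $F_\mu\le C_1t^{-A_\mu}$, so this segment has length at most $C'(R^\theta-R'^\theta)\le C'R^\theta$. Hence
\[
d_\mu(P_n,P_m)\le C''\bigl(R_n^\theta+R_m^\theta\bigr)\to0 .
\]
This gives the Cauchy property. The same bound applied to two different sequences with the same Euclidean limit shows that their mixed distances tend to zero. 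So the sequences are equivalent, which is (ii); by Definition~\ref{def:local-completion-fibre} the fibre is exactly one point $\widehat z_0$.

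For (iii), the lower bound $c|z-z'|$ comes from passing to the limit in Lemma~\ref{lem:euclidean-lower-control}, since $\widehat d_\mu$ is the limit of $d_\mu$ along representing sequences. For the upper bound, put $R=|z-z'|<r_0$ and take representatives $P_n\to(0,0,z)$ and $P'_n\to(0,0,z')$. By Lemma~\ref{lem:subcritical-diameter} applied with base point $z$, the point $P'_n$ has parameter
\[
R'_n=\max\{x'_n,y'_n,|z'_n-z|\}\to R ,
\]
so it joins $(R'_n,R'_n,z)$ with length at most $C(R'_n)^\theta$. The point $P_n$ joins $D(R'_n)$ through the diagonal estimate above. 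Letting $n\to\infty$ gives $\widehat d_\mu(\widehat z,\widehat z')\le C|z-z'|^\theta$. We need $r_0$ small enough that $R'_n<r_0$ and the paths stay in $U_B$, which holds after shrinking $r_0$ uniformly on $B'$.

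For (iv), the map $z\mapsto\widehat z$ is injective by the lower bound in (iii), and continuous by the upper Hölder bound. Its inverse is continuous because $\widehat z\mapsto z$ is the restriction of the Euclidean-limit map, which is Lipschitz by Lemma~\ref{lem:euclidean-lower-control}. Surjectivity onto the part of the boundary over the corner follows from (ii): every completion class with Euclidean limit $(0,0,z_0)$ equals $\widehat z_0$. The main technical point is the uniformity of constants for $z$ near $\partial B'$ and the need for $r_0$ to be uniform. I would handle this by working on a slightly larger ball $B''$ with $B'\Subset B''\Subset B$ and applying the diameter lemma on $B''$.
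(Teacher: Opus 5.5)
Your proposal is correct and follows the paper's proof essentially step for step: the diameter lemma plus diagonal segments for (i)--(ii), Euclidean lower control for the lower bound in (iii), and a diagonal--tangential--diagonal path at height $|z-z'|$ for the upper bound. The only cosmetic difference is that you obtain that path by applying the diameter lemma with base point $z$ to representatives of $\widehat z'$, whereas the paper builds it explicitly from $(r,r,z)$ and $(r,r,z')$. Your argument for (iv) matches the paper's.
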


\begin{proof}
Fix $z_0\in B'$ with $B'\Subset B$.  Let $P_n\to(0,0,z_0)$ Euclideanly and
set
\[
 R_n=\max\{x_n,y_n,|z_n-z_0|\}.
\]
Lemma~\ref{lem:subcritical-diameter} joins $P_n$ to
$Q_n=(R_n,R_n,z_0)$ with length at most $CR_n^\theta$.  The diagonal tail
between $Q_n$ and $Q_m$ has length bounded by
\[
 C\left|R_n^\theta-R_m^\theta\right|.
\]
Since $R_n\to0$, it follows that $(P_n)$ is $d_\mu$-Cauchy.  The same
construction applied to two sequences approaching $(0,0,z_0)$ shows that
their mutual distance tends to zero.  This proves (i) and (ii).

For the lower bound in \eqref{eq:boundary-holder-comparison}, take interior
sequences converging to $(0,0,z)$ and $(0,0,z')$ and pass the Euclidean lower
control of Lemma~\ref{lem:euclidean-lower-control} to the completion.
For the upper bound, put $R=|z-z'|$.  Approximate the two completion points by
$(r,r,z)$ and $(r,r,z')$, let $r\downarrow0$, and join each approximating
point to height $(R,R)$ using the diagonal.  Move tangentially at height
$(R,R)$, then descend diagonally.  The same estimates as in
Lemma~\ref{lem:subcritical-diameter} give total length at most $CR^\theta$.
Passing to the completion proves the upper bound.

The two-sided estimate implies injectivity and continuity of the boundary
map, as well as continuity of its inverse onto its image.  Lemma
\ref{lem:euclidean-lower-control} shows that every completion point represented
by a sequence with Euclidean limit on the corner has a uniquely determined
$z_0$, while (ii) shows that the fiber over that $z_0$ is a singleton.  This
proves (iv).
\end{proof}

\subsection{Critical total-degree tails and completion collapse}
\label{subsec:critical-completion-collapse}

The preceding theorem uses a power gap below total degree two.  At the
critical threshold the power modulus must be replaced by a tail modulus.
Assume throughout this subsection that
\begin{equation}\label{eq:non-supercritical-total-degree}
 A_\mu\le 2.
\end{equation}
Define the total-degree depth and its Laplace profile by
\begin{equation}\label{eq:total-degree-profile}
 \delta(\alpha)=2-\alpha_1-\alpha_2,
 \qquad
 \mathcal T_\mu(S)=\int_E e^{-S\delta(\alpha)}\,\dd\mu(\alpha),
 \qquad S\ge0,
\end{equation}
and put
\begin{equation}\label{eq:critical-completion-modulus}
 \Omega_\mu(r)
 =\int_{\log(1/r)}^\infty\sqrt{\mathcal T_\mu(S)}\,\dd S
  +\sqrt{\mathcal T_\mu(\log(1/r))}.
\end{equation}

\begin{lemma}[Uniform critical corner-diameter estimate]
\label{lem:critical-tail-diameter}
Suppose \eqref{eq:non-supercritical-total-degree} holds and
\begin{equation}\label{eq:critical-total-tail-integrability}
 \int^\infty\sqrt{\mathcal T_\mu(S)}\,\dd S<\infty.
\end{equation}
For every compact ball $B'\Subset B$ there are constants $C,r_0>0$ such
that, whenever $z_0\in B'$ and
\[
 P=(x,y,z)\in U_B,
 \qquad R=\max\{x,y,|z-z_0|\}<r_0,
\]
there is a curve from $P$ to $(R,R,z_0)$ of length at most
\[
 C\Omega_\mu(R).
\]
Moreover, $\Omega_\mu(R)\to0$ as $R\downarrow0$.
\end{lemma}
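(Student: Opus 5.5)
The plan is to repeat the three-leg path construction of Lemma~\ref{lem:subcritical-diameter}. The power bound \(s^{-A_\mu}\) is replaced by the exact total-degree factorization, so every leg length becomes a piece of the tail integral of \(\sqrt{\mathcal T_\mu}\).

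\emph{Pointwise bound.} First, for \(0<s\le t<1\), I would use \(t^{-\alpha_2}\le s^{-\alpha_2}\) to get
\[
 F_\mu^0(s,t)\le\int_E s^{-(\alpha_1+\alpha_2)}\,\dd\mu(\alpha)
 = s^{-2}\int_E e^{-\log(1/s)\,\delta(\alpha)}\,\dd\mu(\alpha)
 = s^{-2}\,\mathcal T_\mu(\log(1/s)).
\]
The same bound holds with the coordinates interchanged. Combined with \eqref{eq:frozen-comparison} and \eqref{eq:background-ellipticity}, this gives \(|v|_{g_\mu}\le C_1 s^{-1}\sqrt{\mathcal T_\mu(\log(1/s))}\,|v|_{\mathrm E}\) at any point whose smaller normal coordinate is \(s\).

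\emph{The path.} As before, write \(m=\min\{x,y\}\) and \(M=\max\{x,y\}\); the path has three legs.

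\begin{enumerate}[label=(\alph*)]
\item Raise the smaller normal coordinate from \(m\) to \(M\), keeping the other coordinates fixed. With the substitution \(S=\log(1/s)\), the length is at most
\[
 C_1\int_m^M\sqrt{\mathcal T_\mu(\log(1/s))}\,\frac{\dd s}{s}
 =C_1\int_{\log(1/M)}^{\log(1/m)}\sqrt{\mathcal T_\mu(S)}\,\dd S
 \le C_1\int_{\log(1/R)}^\infty\sqrt{\mathcal T_\mu(S)}\,\dd S .
\]
\item Move diagonally from \((M,M,z)\) to \((R,R,z)\). The identical computation gives the same bound.
\item Move tangentially from \(z\) to \(z_0\) at the fixed height \((R,R)\). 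Here \(F_\mu\le C R^{-2}\mathcal T_\mu(\log(1/R))\) and \(|z-z_0|\le R\), so the length is at most \(C\sqrt{\mathcal T_\mu(\log(1/R))}\).
\end{enumerate}

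Summing the three legs yields the bound \(C\Omega_\mu(R)\). As in the subcritical lemma, I would choose \(r_0\) smaller than \(\varepsilon\) and than \(\operatorname{dist}(B',\partial B)\), so the path stays in \(U_B\) uniformly for \(z_0\in B'\).

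\emph{Decay of \(\Omega_\mu\).} Here the hypothesis \(A_\mu\le2\) enters: it gives \(\delta\ge0\) on \(E\). Hence \(\mathcal T_\mu(S)\le\mu(E)\) is finite and \(S\mapsto\mathcal T_\mu(S)\) is nonincreasing. The first term of \(\Omega_\mu(R)\) is the tail of the convergent integral \eqref{eq:critical-total-tail-integrability}, so it tends to \(0\) as \(R\downarrow0\). For the second term, a nonnegative nonincreasing function with finite integral on \([S_0,\infty)\) must tend to \(0\), because \(\int_{S/2}^{S}\sqrt{\mathcal T_\mu}\ge\frac S2\sqrt{\mathcal T_\mu(S)}\). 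Thus \(\sqrt{\mathcal T_\mu(\log(1/R))}\to0\) as well.

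The only delicate step is the pointwise bound: one must make sure that the ordered-coordinate estimate converts exactly into \(s^{-2}\mathcal T_\mu\) without losing a power. That exactness is why the factor \(\dd s/s\) turns into \(\dd S\) in leg (a). I expect no real obstacle beyond bookkeeping of constants uniform in \(z_0\).
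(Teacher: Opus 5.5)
Your proposal is correct and follows essentially the same route as the paper. Both arguments use the ordered-coordinate bound \(F_\mu\lesssim s^{-2}\mathcal T_\mu(\log(1/s))\), the raise--diagonal--tangential path whose normal legs turn into pieces of the tail integral under \(S=\log(1/s)\), and monotonicity of \(\mathcal T_\mu\) (from \(A_\mu\le2\)) to get \(\Omega_\mu(R)\to0\); the differences are cosmetic: you bound the two normal legs separately rather than as one integral over \([m,R]\), and you make explicit the \(\int_{S/2}^{S}\) argument that the paper leaves implicit.
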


\begin{proof}
Let $m=\min\{x,y\}$.  If $m\le t<1$, positivity of the exponents and
\eqref{eq:non-supercritical-total-degree} give
\[
 F_\mu(x,y,z)
 \le C\int_E m^{-\alpha_1-\alpha_2}\,\dd\mu(\alpha)
 =C m^{-2}\mathcal T_\mu(\log(1/m)).
\]
The same estimate applies at every point of an ordered-coordinate segment,
with the varying smaller coordinate in place of $m$.  Raise the smaller
normal coordinate to the larger one and then move diagonally to height $R$.
The sum of the two normal costs is bounded by
\[
 C\int_0^R \frac{1}{t}
 \sqrt{\mathcal T_\mu(\log(1/t))}\,\dd t
 =C\int_{\log(1/R)}^\infty\sqrt{\mathcal T_\mu(S)}\,\dd S.
\]
At $(R,R)$, move tangentially from $z$ to $z_0$.  Since
$|z-z_0|\le R$, this costs at most
\[
 C R^{-1}\sqrt{\mathcal T_\mu(\log(1/R))}|z-z_0|
 \le C\sqrt{\mathcal T_\mu(\log(1/R))}.
\]
This proves the stated estimate.  The integral tail tends to zero by
\eqref{eq:critical-total-tail-integrability}.  The profile is nonincreasing;
therefore integrability of its square root also implies
$\mathcal T_\mu(S)\to0$, proving $\Omega_\mu(R)\to0$.
\end{proof}

\begin{theorem}[Critical-tail local metric-completion theorem]
\label{thm:critical-tail-completion}
Assume $A_\mu\le2$ and \eqref{eq:critical-total-tail-integrability}.  Then
all conclusions of Theorem~\ref{thm:subcritical-completion} remain valid,
except that the upper power bound in
\eqref{eq:boundary-holder-comparison} is replaced by
\begin{equation}\label{eq:critical-boundary-modulus}
 c|z-z'|
 \le \widehat d_\mu(\widehat z,\widehat z')
 \le C\Omega_\mu(|z-z'|)
\end{equation}
for sufficiently close $z,z'\in B'\Subset B$.  In particular, every
Euclidean corner point has exactly one completion point above it and no
projective-direction splitting occurs.
\end{theorem}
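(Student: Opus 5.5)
The plan is to repeat the proof of Theorem~\ref{thm:subcritical-completion} verbatim, with Lemma~\ref{lem:critical-tail-diameter} in place of Lemma~\ref{lem:subcritical-diameter} and the modulus $\Omega_\mu$ in place of $R^\theta$.

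First, fix $B'\Subset B$, $z_0\in B'$, and a sequence $P_n\to(0,0,z_0)$ Euclideanly. Set $R_n=\max\{x_n,y_n,|z_n-z_0|\}$. Lemma~\ref{lem:critical-tail-diameter} joins $P_n$ to $Q_n=(R_n,R_n,z_0)$ by a curve of length at most $C\Omega_\mu(R_n)$.

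Next, I bound the diagonal segment between $Q_n$ and $Q_m$ as in the proof of that lemma. Along the diagonal, $F_\mu(t,t,z_0)\le Ct^{-2}\mathcal T_\mu(\log(1/t))$, so this segment has length at most
\[
C\int_{\log(1/\max\{R_n,R_m\})}^{\log(1/\min\{R_n,R_m\})}\sqrt{\mathcal T_\mu(S)}\,\dd S,
\]
which is bounded by $C\Omega_\mu(\max\{R_n,R_m\})$. Since $\Omega_\mu(R)\to0$ as $R\downarrow0$, the sequence $(P_n)$ is $d_\mu$-Cauchy. Any two such sequences become asymptotically close through their diagonal points. This gives (i) and (ii): the fibre over each corner point is nonempty and a singleton.

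For \eqref{eq:critical-boundary-modulus}, the lower bound passes to the completion from Lemma~\ref{lem:euclidean-lower-control}, exactly as before. For the upper bound, set $R=|z-z'|$ and approximate $\widehat z,\widehat z'$ by $(r,r,z)$ and $(r,r,z')$. The path has three pieces.
\begin{itemize}
\item Ascend the diagonal to height $(R,R)$ over $z$. The cost is at most $\int_{\log(1/R)}^{\log(1/r)}\sqrt{\mathcal T_\mu}$.
\item Move tangentially at $(R,R)$. The cost is at most $CR^{-1}\sqrt{\mathcal T_\mu(\log(1/R))}\,R$.
\item Descend the diagonal over $z'$, with the same cost as the ascent.
\end{itemize}
Letting $r\downarrow0$ gives $\widehat d_\mu(\widehat z,\widehat z')\le C\Omega_\mu(R)$. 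One point needs care: the tangential step must stay in $U_B$ and use the uniform upper bounds for $g_0$ and $u$ on $B'$, so I restrict to $R<r_0$ and rely on convexity of $B$.

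Finally, (iv) follows as before. The two-sided estimate gives injectivity and continuity of $z\mapsto\widehat z$, since $\Omega_\mu(R)\to0$. The lower Euclidean bound gives continuity of the inverse. Lemma~\ref{lem:euclidean-lower-control} together with (ii) shows surjectivity onto the part of the boundary lying over the corner.

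The only genuinely new point is that $\Omega_\mu$ need not be a power. Monotonicity of $\mathcal T_\mu$ (nonincreasing in $S$) is what lets the diagonal tail estimates, and the tangential term, be controlled by $\Omega_\mu$ of the larger height. I expect this bookkeeping to be the main, but mild, obstacle.
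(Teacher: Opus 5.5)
Your proposal is correct and is essentially the paper's own proof. You substitute Lemma~\ref{lem:critical-tail-diameter} for Lemma~\ref{lem:subcritical-diameter}, take the lower bound from Lemma~\ref{lem:euclidean-lower-control}, and for the upper bound lift both approximants to $(R,R)$ with $R=|z-z'|$, cross tangentially, and descend, with the normal pieces controlled by the integral term of $\Omega_\mu$ and the tangential piece by $\sqrt{\mathcal T_\mu(\log(1/R))}$; the only quibble is that monotonicity of $\mathcal T_\mu$ is not actually needed for your diagonal or tangential bounds (positivity and evaluation at the fixed height $R$ suffice) and enters only through $\Omega_\mu(R)\to0$, which Lemma~\ref{lem:critical-tail-diameter} already supplies.
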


\begin{proof}
Repeat the proof of Theorem~\ref{thm:subcritical-completion}, replacing
Lemma~\ref{lem:subcritical-diameter} by
Lemma~\ref{lem:critical-tail-diameter}.  The lower estimate again follows
from Lemma~\ref{lem:euclidean-lower-control}.  For the upper estimate between
$\widehat z$ and $\widehat z'$, use the scale
$R=|z-z'|$, raise both approximating points to $(R,R)$, cross tangentially,
and descend.  The normal portions are controlled by the integral-tail term
in \eqref{eq:critical-completion-modulus}, while the tangential portion is
controlled by its second term.
\end{proof}

\begin{proposition}[Diagonal criterion]
\label{prop:diagonal-critical-equivalence}
Under $A_\mu\le2$, the diagonal curve
$\gamma(r)=(r,r,z_0)$ has finite $g_\mu$-length near the corner if and only if
\eqref{eq:critical-total-tail-integrability} holds.  Consequently, in the
non-supercritical total-degree regime, finite diagonal length automatically
upgrades to singleton-fibre metric completion as in
Theorem~\ref{thm:critical-tail-completion}.
\end{proposition}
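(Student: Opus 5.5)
The diagonal criterion is a special case of the critical-tail criterion for weighted radial length, applied to the diagonal direction. I would then combine it with the critical completion theorem to get the upgrade.

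Take \(w=(1,1)\). Then \(m(w)=1\) and \(\cH_\mu(w)=\max_{\alpha\in E}(\alpha_1+\alpha_2)=A_\mu\). The depth map becomes \(\Delta_w(\alpha)=A_\mu-\alpha_1-\alpha_2\). Proposition~\ref{prop:exact-factorization} gives
\[
F_\mu^0(r,r)=\int_E r^{-(\alpha_1+\alpha_2)}\,\dd\mu(\alpha)=r^{-2}\mathcal T_\mu(\log(1/r)).
\]
This identity holds directly from the definition \eqref{eq:total-degree-profile}, whatever the value of \(A_\mu\). Since \(\gamma\) has \(g_0\)-speed comparable to \(1\), the bounds \eqref{eq:frozen-comparison} and \eqref{eq:background-ellipticity} give
\[
L_{g_\mu}(\gamma)\asympC\int_0^{r_0}r^{-1}\sqrt{\mathcal T_\mu(\log(1/r))}\,\dd r
=\int_{\log(1/r_0)}^\infty\sqrt{\mathcal T_\mu(S)}\,\dd S
\]
after the substitution \(S=\log(1/r)\). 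This is exactly the computation in the proof of Theorem~\ref{thm:critical-tail}, but it uses \(\mathcal T_\mu\) instead of \(\Lambda_{\mu,w}\). That avoids splitting into the cases \(A_\mu=2\) and \(A_\mu<2\).

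For completeness I would note how the cases look. When \(A_\mu=2\), \(\mathcal T_\mu=\Lambda_{\mu,(1,1)}\), and Theorem~\ref{thm:critical-tail} applies verbatim. When \(A_\mu<2\), \(\mathcal T_\mu(S)\le\mu(E)e^{-S(2-A_\mu)}\). Both sides of the equivalence are then automatically true: the integral converges, and the diagonal has finite length by Theorem~\ref{thm:strict-gap-radial}(i). In either case the two-sided comparability gives the equivalence. The constants depend only on \(c_0,C_0,c_u,C_u\), so the statement is uniform in \(z_0\in B\).

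For the "consequently" clause, suppose the diagonal has finite length. Then \eqref{eq:critical-total-tail-integrability} holds by the first part, and together with \(A_\mu\le2\) this is exactly the hypothesis of Theorem~\ref{thm:critical-tail-completion}. That theorem yields singleton fibres, the modulus \eqref{eq:critical-boundary-modulus}, and the absence of projective splitting.

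There is no real obstacle here. The only care point is that the equivalence must be phrased through \(\mathcal T_\mu\) rather than \(\Lambda_{\mu,w}\): the two differ by the factor \(e^{S(2-A_\mu)}\) when \(A_\mu<2\), and the direct factorization identity above sidesteps this.
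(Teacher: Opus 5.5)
Your proposal is correct and follows the paper's proof. The paper likewise derives $|\dot\gamma(r)|_{g_\mu}\asymp r^{-1}\sqrt{\mathcal T_\mu(\log(1/r))}$ from the coefficient and ellipticity bounds, substitutes $S=\log(1/r)$, and obtains the upgrade from Theorem~\ref{thm:critical-tail-completion}; your case split $A_\mu=2$ versus $A_\mu<2$ is a harmless consistency check that the direct identity $F_\mu^0(r,r)=r^{-2}\mathcal T_\mu(\log(1/r))$ already makes unnecessary.
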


\begin{proof}
Uniform upper and lower coefficient bounds and ellipticity give
\[
 |\dot\gamma(r)|_{g_\mu}
 \asymp r^{-1}\sqrt{\mathcal T_\mu(\log(1/r))}.
\]
The substitution $S=\log(1/r)$ proves the equivalence.  The final assertion
is Theorem~\ref{thm:critical-tail-completion}.
\end{proof}

\begin{lemma}[Path-independent total-degree lower bound]
\label{lem:path-independent-total-degree-lower}
Assume $A_\mu\le2$ and let $\gamma(t)=(x(t),y(t),z(t))$ be an absolutely
continuous curve in $U_B$ such that
\[
 R(t):=\max\{x(t),y(t)\}\longrightarrow0
\]
along one end of its parameter interval.  Then, up to a constant depending
only on the standing coefficient and ellipticity bounds,
\begin{equation}\label{eq:path-independent-tail-lower}
 L_{g_\mu}(\gamma)
 \ge
 c\int^{\infty}\sqrt{\mathcal T_\mu(S)}\,\dd S.
\end{equation}
More precisely, the contribution of every portion on which $R$ crosses from
$r_1$ to $r_0<r_1$ is bounded below by
\[
 c\int_{\log(1/r_1)}^{\log(1/r_0)}
 \sqrt{\mathcal T_\mu(S)}\,\dd S.
\]
\end{lemma}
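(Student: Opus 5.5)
The plan is to mimic the boundary-defining-function argument of Theorem~\ref{thm:finite-newton-self-contained}, but with the function $R=\max\{x,y\}$ in place of $\rho=x+y$. The key steps, in order, are a pointwise lower bound on $F_\mu$, a one-dimensional chain-rule bound for $R$, and a change of variables.

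\emph{Step 1: pointwise lower bound.} Since $0<x,y\le R<1$ and $\alpha_1,\alpha_2\ge0$, we have $x^{-\alpha_1}y^{-\alpha_2}\ge R^{-(\alpha_1+\alpha_2)}$. The coefficient bounds \eqref{eq:coefficient-bounds} then give
\[
 F_\mu(x,y,z)\ge c_u\int_E R^{-\alpha_1-\alpha_2}\,\dd\mu(\alpha)
 =c_u R^{-2}\,\mathcal T_\mu(\log(1/R)).
\]
This is exactly the diagonal density evaluated at level $R$. Combined with $g_0\ge c_0g_{\mathrm E}$, it yields
\[
 |\dot\gamma|_{g_\mu}\ge \sqrt{c_0c_u}\,R^{-1}\sqrt{\mathcal T_\mu(\log(1/R))}\,|\dot\gamma|_{\mathrm E}.
\]

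\emph{Step 2: control of $R$ along the curve.} The function $R(t)=\max\{x(t),y(t)\}$ is a maximum of two absolutely continuous functions, hence absolutely continuous. Moreover $|\dot R|\le\max\{|\dot x|,|\dot y|\}\le|\dot\gamma|_{\mathrm E}$ almost everywhere. Put
\[
 \Phi(r)=\int_{\log(1/r)}^{\infty}\sqrt{\mathcal T_\mu(S)}\,\dd S ,
\]
allowed to be infinite, and work with the finite truncated version on a bounded range $[r_0,r_1]$. Then $\Phi'(r)=r^{-1}\sqrt{\mathcal T_\mu(\log(1/r))}$, which is continuous in $r$ because $\mathcal T_\mu$ is continuous (by dominated convergence) and monotone. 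The chain rule for the absolutely continuous composition $\Phi\circ R$ on $[r_0,r_1]$ gives
\[
 \Bigl|\tfrac{\dd}{\dd t}\Phi(R(t))\Bigr|
 \le R^{-1}\sqrt{\mathcal T_\mu(\log(1/R))}\,|\dot R|
 \le (c_0c_u)^{-1/2}|\dot\gamma|_{g_\mu}.
\]

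\emph{Step 3: integration and change of variables.} On any portion where $R$ passes from $r_1$ to $r_0$, integrating the previous inequality bounds the length of that portion below by $c\,(\Phi(r_0)-\Phi(r_1))$. The substitution $S=\log(1/r)$ shows that this difference equals $\int_{\log(1/r_1)}^{\log(1/r_0)}\sqrt{\mathcal T_\mu}\,\dd S$, which is the "more precisely" claim. Letting $r_0\downarrow0$ along the end where $R\to0$ (monotone convergence, possibly to $+\infty$) gives \eqref{eq:path-independent-tail-lower}.

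\emph{Main obstacle.} The delicate point is the rigorous handling of the non-smooth $\max$ together with possibly infinite $\Phi$. I would avoid working with $\Phi$ itself by restricting to a subinterval $[t_1,t_0]$ with $R(t_1)=r_1$, $R(t_0)=r_0$ and $R\in[r_0,r_1]$ in between (chosen as the last and first crossing times). On such an interval the truncated primitive is Lipschitz, so $\Phi\circ R$ is absolutely continuous and the fundamental theorem of calculus applies. Note also that non-monotone excursions of $R$ only increase the length, so the lower bound is unaffected.
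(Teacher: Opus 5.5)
Your proposal is correct and follows essentially the same route as the paper: the pointwise bound $F_\mu\ge c_uR^{-2}\mathcal T_\mu(\log(1/R))$, the a.e.\ estimate $|\dot R|\le|\dot\gamma|_{\mathrm E}$, and integration of a primitive $\Phi$ with $\Phi'(r)=r^{-1}\sqrt{\mathcal T_\mu(\log(1/r))}$ along a crossing of $R$ from $r_1$ to $r_0$; your restriction to a crossing interval on which $\Phi$ is $C^1$ simply makes explicit the step the paper handles by appeal to total variation. One cosmetic slip: with your orientation $\Phi$ is increasing in $r$, so the length bound should read $c\,(\Phi(r_1)-\Phi(r_0))$, which is the positive quantity you then identify with $\int_{\log(1/r_1)}^{\log(1/r_0)}\sqrt{\mathcal T_\mu(S)}\,\dd S$.
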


\begin{proof}
Since $x,y\le R<1$ and the exponents are nonnegative,
\[
 x^{-\alpha_1}y^{-\alpha_2}
 \ge R^{-(\alpha_1+\alpha_2)}
 =R^{-2}e^{-\log(1/R)\delta(\alpha)}.
\]
The lower coefficient bound therefore gives
\[
 F_\mu(x,y,z)
 \ge c_0 R^{-2}\mathcal T_\mu(\log(1/R)).
\]
The function $R=\max\{x,y\}$ is absolutely continuous and satisfies
$|R'|\le\sqrt{|x'|^2+|y'|^2}\le |\gamma'|_{\mathrm E}$ almost everywhere.
Uniform lower ellipticity now yields
\[
 |\gamma'|_{g_\mu}
 \ge c_1 R^{-1}\sqrt{\mathcal T_\mu(\log(1/R))}\,|R'|.
\]
Set
\[
 \Phi(r)=\int_r^{r_1}
 \frac1t\sqrt{\mathcal T_\mu(\log(1/t))}\,\dd t.
\]
Then the metric derivative of $\Phi\circ R$ is bounded above by a constant
multiple of $|\gamma'|_{g_\mu}$.  Total variation, or equivalently the
one-dimensional area formula applied to the scale crossings of $R$, gives
\[
 L_{g_\mu}(\gamma)
 \ge c_1\left|\Phi(r_0)-\Phi(r_1)\right|.
\]
The substitution $S=\log(1/t)$ proves the stated estimate.  Letting
$r_0\downarrow0$ gives \eqref{eq:path-independent-tail-lower}.
\end{proof}

\begin{lemma}[Supercritical support obstruction]
\label{lem:supercritical-support-obstruction}
If $A_\mu>2$, every curve approaching the corner has infinite
$g_\mu$-length.
\end{lemma}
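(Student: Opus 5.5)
The plan is to reduce to the finite obstruction of Theorem~\ref{thm:finite-newton-self-contained} via a one-atom lower bound, or equivalently to run the $\rho$-argument directly. Since $A_\mu>2$ is attained on the compact support $E$, pick $\alpha^*\in E$ with $\alpha^*_1+\alpha^*_2=A_\mu>2$. Fix $\eta>0$ with $A_\mu-2\eta>2$ and let $N_\eta=\{\alpha\in E:\alpha_1+\alpha_2>A_\mu-\eta\}\cap B(\alpha^*,\eta)$. Because $E=\supp\mu$, this relatively open neighbourhood of a support point has $\mu(N_\eta)>0$.

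Next, bound $F_\mu$ from below pointwise. Work in $0<x,y<1$ and set $R=\max\{x,y\}$. For $\alpha\in N_\eta$ the exponents are nonnegative, so $x^{-\alpha_1}y^{-\alpha_2}\ge R^{-(\alpha_1+\alpha_2)}\ge R^{-(A_\mu-\eta)}$. This is the same inequality used in Lemma~\ref{lem:path-independent-total-degree-lower}. With the lower coefficient bound \eqref{eq:coefficient-bounds} it gives
\[
F_\mu(x,y,z)\ge c_u\,\mu(N_\eta)\,R^{-(A_\mu-\eta)}.
\]
This avoids any need to pick a single atom carrying positive mass, which a diffuse $\mu$ might not have. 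An alternative is to note $F_\mu\ge c_u\mu(N_\eta)\,x^{-a}y^{-b}$ for a fixed exponent with $a+b\ge2$, and then apply Proposition~\ref{prop:finite-obstruction}. Doing so requires a uniform comparison $x^{-\alpha_1}y^{-\alpha_2}\ge x^{-a}y^{-b}$, which can fail in mixed directions. So I will use the $R$-bound instead.

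Then carry out the pathwise estimate exactly as in Lemma~\ref{lem:path-independent-total-degree-lower}. Let $\gamma$ approach the corner, so $R(t)\to0$. $R$ is absolutely continuous with $|R'|\le|\gamma'|_{\mathrm E}$ a.e. Uniform ellipticity then gives
\[
|\gamma'|_{g_\mu}\ge c\,R^{-\kappa}|R'|,\qquad \kappa=\tfrac{A_\mu-\eta}{2}>1.
\]
Take the primitive $\Psi(r)=r^{1-\kappa}/(\kappa-1)$. Then $L_{g_\mu}(\gamma|_{[t_0,t]})\ge c\,|\Psi(R(t))-\Psi(R(t_0))|$, and this tends to $\infty$ as $R(t)\to0$. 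The estimate is a total-variation bound, so oscillations of $R$ do not matter.

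The main (mild) obstacle is the positive-measure step. The naive choice of the exact maximizer $\alpha^*$ may carry zero mass. Using a neighbourhood and sacrificing $\eta$ of the exponent keeps the gap strictly above $2$. The resulting length bound is then uniform over all approaching curves, regardless of drifting projective direction.
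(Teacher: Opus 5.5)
Your proof is correct, but it takes a slightly different route from the paper's. The paper takes a small ball around a supercritical support point and sets $a=\inf\alpha_1$ and $b=\inf\alpha_2$ over that ball. This gives $F_\mu\ge c\,x^{-a}y^{-b}$ with $a+b>2$. The paper then cites the one-monomial case of Theorem~\ref{thm:finite-newton-self-contained}, whose proof uses $\rho=x+y$ and $x^{-a}y^{-b}\ge\rho^{-(a+b)}$.

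You skip the intermediate monomial. You bound every integrand on the neighbourhood below by $R^{-(\alpha_1+\alpha_2)}$ with $R=\max\{x,y\}$, then integrate the resulting power of $R$ along the path. In effect you run the mechanism of Lemma~\ref{lem:path-independent-total-degree-lower} with a strict power gap instead of a Laplace tail. The paper's version is shorter once the finite theorem is available. It also shows explicitly that a supercritical support point yields a finite obstruction certificate in the sense of Proposition~\ref{prop:finite-obstruction}. Your version is self-contained, and it shows that the single scale $R$ handles both empty-fibre alternatives of Theorem~\ref{thm:complete-local-completion-classification} by the same argument.

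One side remark in your proposal is wrong, although your proof does not depend on it. The one-monomial reduction does not fail in mixed directions. For $0<x<1$ the map $t\mapsto x^{-t}$ is increasing. So if $a,b$ are the coordinatewise infima of $\alpha_1,\alpha_2$ over $N_\eta$, then $x^{-\alpha_1}y^{-\alpha_2}\ge x^{-a}y^{-b}$ holds uniformly on $(0,1)^2$ for every $\alpha\in N_\eta$. With your own choice $A_\mu-2\eta>2$ you also get $a+b\ge\alpha^*_1+\alpha^*_2-2\eta>2$. The comparison breaks only if $(a,b)$ fails to lie coordinatewise below every point of the neighbourhood, for instance $(a,b)=\alpha^*$. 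The coordinatewise-infimum choice is exactly the paper's construction.
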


\begin{proof}
Choose $\alpha^*\in\supp\mu$ with
$\alpha^*_1+\alpha^*_2>2$.  For sufficiently small $\eta>0$, the set
\[
 E_\eta=\{\alpha\in E:|\alpha-\alpha^*|<\eta\}
\]
has positive $\mu$-measure and every $\alpha\in E_\eta$ satisfies
$\alpha_1+\alpha_2>2+\varepsilon$ for some $\varepsilon>0$.
Taking coordinatewise lower bounds on this neighborhood gives numbers
$a,b\ge0$ with $a+b>2$ and
\[
 F_\mu(x,y,z)\ge c x^{-a}y^{-b}.
\]
The one-monomial finite positive obstruction theorem
(Theorem~\ref{thm:finite-newton-self-contained}) then implies that every
corner-approaching curve has infinite length.
\end{proof}

\begin{theorem}[Complete local metric-completion classification]
\label{thm:complete-local-completion-classification}
Let $p=(0,0,z_0)$ be a point of the codimension-two corner stratum and retain
the standing positivity, compact-support, coefficient, and ellipticity
assumptions.  Exactly one of the following alternatives occurs.
\begin{enumerate}[label=(\roman*)]
\item If $A_\mu>2$, then $p$ is at infinite intrinsic distance and no point of
the metric completion lies over $p$.
\item If $A_\mu\le2$ and
\[
 \int^\infty\sqrt{\mathcal T_\mu(S)}\,\dd S=\infty,
\]
then again $p$ is at infinite intrinsic distance and the completion fibre over
$p$ is empty.
\item If $A_\mu\le2$ and
\[
 \int^\infty\sqrt{\mathcal T_\mu(S)}\,\dd S<\infty,
\]
then the completion fibre over $p$ consists of exactly one point.  Every
interior sequence converging Euclideanly to $p$ is $d_\mu$-Cauchy, and any two
such sequences are equivalent.
\end{enumerate}
Consequently, under the standing assumptions, nonempty split completion
fibres do not occur.  Equivalently, local corner completion is completely
decided by the supercritical-support obstruction and, in the
non-supercritical case, by the total-degree Laplace-tail integral.
\end{theorem}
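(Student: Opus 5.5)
The proof assembles the preceding lemmas, with Lemma~\ref{lem:cauchy-finite-curve} translating between completion fibres and finite-length curves. The three alternatives are mutually exclusive and exhaustive by construction: either \(A_\mu>2\), or \(A_\mu\le2\) and the tail integral \(\int^\infty\sqrt{\mathcal T_\mu(S)}\,\dd S\) is infinite, or it is finite. So it suffices to verify each conclusion separately and then read off the final statement.

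For (i), Lemma~\ref{lem:supercritical-support-obstruction} shows that every curve approaching the corner has infinite \(g_\mu\)-length. We need to observe that a curve in \(U_B\) whose Euclidean limit is \(p=(0,0,z_0)\) is in particular corner-approaching. By Lemma~\ref{lem:cauchy-finite-curve}, a nonempty fibre over \(p\) would produce a finite-length curve with Euclidean endpoint \(p\), which is a contradiction. Hence the fibre is empty. The statement that \(p\) lies at infinite intrinsic distance is interpreted in the same sense: no finite-length approach exists.

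For (ii), we apply Lemma~\ref{lem:path-independent-total-degree-lower}. Any absolutely continuous curve with Euclidean limit \(p\) has \(R=\max\{x,y\}\to0\). The lemma then bounds its length below by \(c\int^\infty\sqrt{\mathcal T_\mu(S)}\,\dd S=\infty\). The only point needing care is that the lemma's crossing estimate, with \(r_1\) fixed and \(r_0\downarrow0\), gives divergence for every such curve however \(R\) oscillates. This holds because the bound comes from the total variation of \(\Phi\circ R\), and \(R\) does reach arbitrarily small values. Lemma~\ref{lem:cauchy-finite-curve} then gives an empty fibre, exactly as in (i).

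For (iii), we invoke Theorem~\ref{thm:critical-tail-completion}. We choose a compact ball \(B'\Subset B\) containing \(z_0\) in its interior, which is possible since \(B\) is open. The argument is then Lemma~\ref{lem:critical-tail-diameter} applied to a sequence \(P_n\to p\) with \(R_n=\max\{x_n,y_n,|z_n-z_0|\}\). This joins \(P_n\) to \(Q_n=(R_n,R_n,z_0)\) at cost \(C\Omega_\mu(R_n)\to0\). The diagonal segment between \(Q_n\) and \(Q_m\) costs at most \(C\int_{\log(1/\max\{R_n,R_m\})}^{\infty}\sqrt{\mathcal T_\mu}\), which also tends to zero by Proposition~\ref{prop:diagonal-critical-equivalence}. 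Therefore every such sequence is Cauchy, any two are equivalent, and the fibre is exactly one point. It is nonempty because the diagonal itself supplies a sequence.

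The final assertion follows since every nonempty fibre falls under (iii). I expect the main obstacle to be the uniformity issue in (iii) near \(\partial B\). The diameter lemma requires \(z_0\in B'\Subset B\) and \(R<r_0\) so that the constructed paths stay inside \(U_B\). Since \(z_0\) is fixed, choosing \(B'\) to be a small closed ball around \(z_0\) resolves this, because only tails of sequences matter.
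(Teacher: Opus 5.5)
Your proposal is correct and follows the paper's proof: (i) combines the supercritical support obstruction with the Cauchy sequence--finite curve principle (Lemma~\ref{lem:cauchy-finite-curve}), (ii) combines the path-independent total-degree lower bound with the same principle, and (iii) is Theorem~\ref{thm:critical-tail-completion}, which you unpack through the critical corner-diameter lemma and the diagonal estimate. Your added remarks, on oscillating $R$ in (ii) and on choosing a small compact ball $B'$ around $z_0$ in (iii), only make explicit steps that the paper's proof leaves implicit.
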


\begin{proof}
Lemma~\ref{lem:supercritical-support-obstruction} shows that every curve
approaching \(p\) has infinite length; Lemma~\ref{lem:cauchy-finite-curve}
then shows that the completion fibre is empty, proving (i).
Under $A_\mu\le2$, Lemma~\ref{lem:path-independent-total-degree-lower} shows
that divergence of the tail integral forces every corner-approaching curve to
have infinite length.  A second application of
Lemma~\ref{lem:cauchy-finite-curve} proves that the fibre is empty, proving
(ii).  If the integral converges,
Theorem~\ref{thm:critical-tail-completion} proves that all Euclidean approaches
are Cauchy and mutually equivalent, giving a singleton fibre and proving
(iii).  The three alternatives are mutually exclusive and exhaustive.
\end{proof}

\begin{corollary}[Accessibility, Cauchy approach, and completion collapse]
\label{cor:accessibility-completion-equivalence}
When $A_\mu\le2$, the following are equivalent:
\begin{enumerate}[label=(\alph*)]
\item some curve reaches the corner with finite length;
\item the diagonal curve has finite length;
\item $\int^\infty\sqrt{\mathcal T_\mu(S)}\,\dd S<\infty$;
\item every Euclidean approach sequence is Cauchy and all such sequences define
one completion point.
\end{enumerate}
\end{corollary}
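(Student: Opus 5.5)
The plan is to prove the four equivalences as a cycle, relying only on results already established in this section. Throughout we assume $A_\mu\le2$, so Lemma~\ref{lem:path-independent-total-degree-lower}, Proposition~\ref{prop:diagonal-critical-equivalence} and Theorem~\ref{thm:critical-tail-completion} are all available.

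The implications go in the order (b)$\Rightarrow$(a)$\Rightarrow$(c)$\Rightarrow$(d)$\Rightarrow$(b).

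\textbf{(b)$\Rightarrow$(a).} This is immediate, because the diagonal curve $(r,r,z_0)$ is itself a corner-approaching curve.

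\textbf{(a)$\Rightarrow$(c).} Take any curve $\gamma$ of finite length reaching the corner. Along it, $R=\max\{x,y\}\to0$. Lemma~\ref{lem:path-independent-total-degree-lower} then gives
\[
\infty>L_{g_\mu}(\gamma)\ge c\int^\infty\sqrt{\mathcal T_\mu(S)}\,\dd S,
\]
which is (c). The tangential drift of $z$ along $\gamma$ causes no difficulty here, since the lemma uses only the normal scale $R$.

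\textbf{(c)$\Rightarrow$(d).} This is the content of Theorem~\ref{thm:critical-tail-completion}, equivalently part (iii) of Theorem~\ref{thm:complete-local-completion-classification}. The only thing to check is that Lemma~\ref{lem:critical-tail-diameter} gives
\[
d_\mu(P_n,P_m)\le C\bigl(\Omega_\mu(R_n)+\Omega_\mu(R_m)\bigr)+(\text{diagonal tail between heights }R_n,R_m),
\]
and that this bound tends to zero.

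\textbf{(d)$\Rightarrow$(b).} This is the step that needs care, since (d) is a statement about sequences while (b) is about one specific curve. Sample the diagonal at $r_n\downarrow0$; by (d) the sequence $(r_n,r_n,z_0)$ is $d_\mu$-Cauchy, so Lemma~\ref{lem:cauchy-finite-curve} produces some finite-length curve ending at $p$. That is (a), and (a)$\Rightarrow$(c) is already proved. Proposition~\ref{prop:diagonal-critical-equivalence} then converts (c) into finite diagonal length.

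So (d) returns to (b) through (a) and (c). This detour is the only subtle point: a Cauchy diagonal sequence does not by itself make the diagonal curve have finite length, and the path-independent lower bound is what bridges the gap.
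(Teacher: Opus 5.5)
Your proposal is correct and is essentially the paper's proof. The paper uses the same four ingredients:
\begin{itemize}
\item Proposition~\ref{prop:diagonal-critical-equivalence} for (b)$\Leftrightarrow$(c);
\item Theorem~\ref{thm:critical-tail-completion} for (c)$\Rightarrow$(d);
\item sampling the diagonal together with Lemma~\ref{lem:cauchy-finite-curve} for (d)$\Rightarrow$(a);
\item Lemma~\ref{lem:path-independent-total-degree-lower} for (a)$\Rightarrow$(c).
\end{itemize}
The only difference is bookkeeping: you take (b)$\Rightarrow$(a) as trivial rather than routing it through the diagonal criterion, and you reach (b) from (d) by composing the same implications (d)$\Rightarrow$(a)$\Rightarrow$(c)$\Rightarrow$(b).
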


\begin{proof}
The implication (b)$\Leftrightarrow$(c) is
Proposition~\ref{prop:diagonal-critical-equivalence}, and
(c)$\Rightarrow$(d) is
Theorem~\ref{thm:critical-tail-completion}.  The implication (d)$\Rightarrow$(a) follows by sampling the diagonal and applying Lemma~\ref{lem:cauchy-finite-curve}.
Finally, (a)$\Rightarrow$(c) follows from
Lemma~\ref{lem:path-independent-total-degree-lower}.
\end{proof}

\begin{proposition}[Anisotropic stress test]
\label{prop:anisotropic-stress-test}
Assume \(A_\mu\le2\) and
\[
\int^\infty\sqrt{\mathcal T_\mu(S)}\,\dd S<\infty.
\]
Let \(P_n=(x_n,y_n,z_n)\) and
\(Q_n=(\tilde x_n,\tilde y_n,\tilde z_n)\) be arbitrary sequences
converging Euclideanly to the same corner point.  No bound is imposed on
\(x_n/y_n\) or \(\tilde x_n/\tilde y_n\).  Then
\[
 d_\mu(P_n,Q_n)\longrightarrow0.
\]
Thus even exponentially or super-exponentially drifting projective directions
do not generate distinct completion classes.
\end{proposition}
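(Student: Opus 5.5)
The plan is to reduce the statement to Lemma~\ref{lem:critical-tail-diameter} together with the triangle inequality, routing both sequences through a common diagonal point over the limiting tangential coordinate. Let \(p=(0,0,z_0)\) be the common Euclidean limit. Fix a compact ball \(B'\Subset B\) containing \(z_0\) in its interior. Set
\[
R_n=\max\{x_n,y_n,|z_n-z_0|\},\qquad \tilde R_n=\max\{\tilde x_n,\tilde y_n,|\tilde z_n-z_0|\}.
\]
Both tend to \(0\), so for large \(n\) they lie below the constant \(r_0\) of Lemma~\ref{lem:critical-tail-diameter}. That lemma then joins \(P_n\) to \(D_n=(R_n,R_n,z_0)\) by a curve of length at most \(C\Omega_\mu(R_n)\). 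It likewise joins \(Q_n\) to \(\tilde D_n=(\tilde R_n,\tilde R_n,z_0)\) with length at most \(C\Omega_\mu(\tilde R_n)\).

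The key point is that the lemma's construction never uses the ratio \(x/y\). The ordered-coordinate bound \(F_\mu\le C m^{-2}\mathcal T_\mu(\log(1/m))\) is applied pointwise along the segment on which the smaller coordinate rises. The resulting cost is an integral in \(t\) of \(t^{-1}\sqrt{\mathcal T_\mu(\log(1/t))}\) from \(m\) up to \(R\). This is bounded by the full tail \(\int_{\log(1/R)}^\infty\sqrt{\mathcal T_\mu}\,\dd S\), however small \(m\) is relative to \(\max\{x,y\}\). So exponentially or super-exponentially drifting directions produce no extra cost. I will note this explicitly, since it is the content of the ``stress test''.

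It remains to join \(D_n\) and \(\tilde D_n\), which both lie on the diagonal over \(z_0\). I take the diagonal segment between them. By Proposition~\ref{prop:diagonal-critical-equivalence}, its length is comparable to
\[
\Bigl|\int_{\log(1/\tilde R_n)}^{\log(1/R_n)}\sqrt{\mathcal T_\mu(S)}\,\dd S\Bigr|\le \int_{\log(1/\max\{R_n,\tilde R_n\})}^{\infty}\sqrt{\mathcal T_\mu(S)}\,\dd S.
\]
The triangle inequality then gives
\[
d_\mu(P_n,Q_n)\le C\bigl(\Omega_\mu(R_n)+\Omega_\mu(\tilde R_n)+\Omega_\mu(\max\{R_n,\tilde R_n\})\bigr),
\]
and this tends to zero because \(\Omega_\mu(R)\to0\) as \(R\downarrow0\) by the same lemma.

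I expect the main obstacle to be verifying that the curves stay inside \(U_B\) and that the constants are uniform. The tangential move at height \(R_n\) needs \(z_0\in B'\) and \(R_n<r_0\), which holds for large \(n\). The monotonicity used when bounding the partial tail by \(\Omega_\mu\) also needs checking: \(\Omega_\mu\) is nondecreasing in \(R\), because \(\mathcal T_\mu\) is nonincreasing. Both points are routine once stated. Finally, I will remark that the conclusion is consistent with the singleton fibre of Theorem~\ref{thm:critical-tail-completion}, but it is proved directly without passing to the completion.
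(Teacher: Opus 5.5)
Your proposal is correct and follows essentially the paper's route: both sequences are joined to a diagonal point over $z_0$ by Lemma~\ref{lem:critical-tail-diameter}, at cost $C\Omega_\mu\to0$, and the construction is insensitive to the ratio $x/y$. The only difference is that the paper uses a single common scale $R_n$ (the maximum over both points' coordinates), so both curves end at the same point $(R_n,R_n,z_0)$. You instead use individual scales and bridge $D_n,\tilde D_n$ with a diagonal segment, which matches the lemma's literal hypothesis $R=\max\{x,y,|z-z_0|\}$ slightly more faithfully.
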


\begin{proof}
Put
\[
 R_n=\max\{x_n,y_n,\tilde x_n,\tilde y_n,
 |z_n-z_0|,|\tilde z_n-z_0|\}.
\]
Apply Lemma~\ref{lem:critical-tail-diameter} to join both points to
\((R_n,R_n,z_0)\).  The two resulting lengths are bounded by
\(C\Omega_\mu(R_n)\), which tends to zero.
\end{proof}

\begin{example}[A genuinely critical completion point]
\label{ex:critical-diffuse-completion}
Let $\beta>0$ and let $\mu_\beta$ be the pushforward of
$\beta t^{\beta-1}\,\dd t$ on $(0,1]$ under
\[
 t\longmapsto (2-t,0).
\]
Its compact support contains $(2,0)$, so $A_{\mu_\beta}=2$, but it has no atom
on the critical face.  Its total-degree profile is
\[
 \mathcal T_{\mu_\beta}(S)
 =\beta\int_0^1e^{-St}t^{\beta-1}\,\dd t
 \sim \beta\Gamma(\beta)S^{-\beta}.
\]
Thus \eqref{eq:critical-total-tail-integrability} holds exactly when
$\beta>2$.  For $\beta>2$, Theorem~\ref{thm:critical-tail-completion} gives
one completion point over every point of the corner stratum, despite the
absence of a strict total-degree gap.
\end{example}

\begin{example}[A critical atom remains infinitely far away]
\label{ex:critical-atom-no-completion}
For $\mu=\delta_{(1,1)}$ one has $A_\mu=2$ and
$\mathcal T_\mu(S)\equiv1$.  The diagonal integral diverges.  In fact the
finite positive obstruction theorem applies to the monomial metric
$(xy)^{-1}g_0$ and shows that the corner is at infinite distance.  Hence the
condition $A_\mu=2$ alone permits both completion collapse
(Example~\ref{ex:critical-diffuse-completion} with $\beta>2$) and complete
inaccessibility.
\end{example}

\begin{remark}[Why drifting directions do not create an escape]
\label{rem:critical-completion-open}
The apparent mixed or drifting-direction problem is removed by positivity.
If $A_\mu>2$, a positive-mass neighborhood of one supercritical support point
already supplies a finite monomial lower obstruction.  If $A_\mu\le2$, the
scale $R=\max\{x,y\}$ gives a path-independent lower bound involving the same
total-degree tail that controls the diagonal.  Thus changing projective
direction cannot evade a divergent tail, and no additional projective family
of completion invariants is needed for the local fibre classification.
\end{remark}

\section{Finite truncations and monotone stability}
\label{sec:truncations}

This section studies countably atomic interactions
\begin{equation}\label{eq:atomic-full-measure}
\mu=\sum_{j=1}^{\infty}c_j\delta_{\alpha_j},
\qquad c_j>0,
\qquad \sum_{j=1}^{\infty}c_j<\infty,
\end{equation}
with compact exponent closure, together with their finite truncations
\begin{equation}\label{eq:truncated-measure}
\mu_N=\sum_{j=1}^{N}c_j\delta_{\alpha_j}.
\end{equation}
Write \(F_N=F_{\mu_N}^0\), \(F=F_\mu^0\), and
\(g_N=F_Ng_0\), \(g=Fg_0\).

\subsection{Relation to monotone metric convergence}

Monotone sequences of metric tensors and their induced distance functions
form an established topic in metric convergence.  In particular, increasing
Riemannian metrics induce increasing intrinsic distances, and under global
compactness and diameter hypotheses stronger uniform and
Gromov--Hausdorff conclusions are available.  The present local singular
problem is narrower in one direction and more delicate in another: the
basic fixed-curve convergence follows directly from the monotone
convergence theorem, while convergence of the infimum over all curves need
not follow without additional compactness or localization assumptions.

The results below are therefore not presented as a new abstract convergence
theory for Riemannian distances.  Their paper-specific content is the
interaction between finite Newton truncation, projective support functions,
and critical accessibility.  In particular, the section identifies the
one-sided instability permitted by positivity: accessibility can be lost in
the infinite critical limit, but it cannot be gained by adding positive
terms.

\begin{proposition}[Uniform convergence away from the corner]
\label{prop:uniform-convergence-interior}
Let \(K\Subset U^\circ\).  Then \(F_N\to F\) uniformly on \(K\).  In
particular, \(g_N\to g\) uniformly as quadratic forms on \(K\).
\end{proposition}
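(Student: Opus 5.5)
The plan is to estimate the tail \(F-F_N\) uniformly on \(K\) by a dominated-tail argument. Since \(K\Subset U^\circ\) is compact, there is \(\kappa\in(0,1)\) with \(x,y\ge\kappa\) at every point of \(K\); shrinking \(\varepsilon\) below \(1\) if needed, we also have \(x,y<1\) on \(K\). Let \(A_0=\max_j a_j\) and \(B_0=\max_j b_j\). These are finite because the exponent closure is compact, exactly as in Proposition~\ref{prop:interior-finiteness}. For every \(j\) and every point of \(K\) we then get
\[
 x^{-a_j}y^{-b_j}\le \kappa^{-A_0-B_0}=:M_K .
\]

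Since all terms are positive, the tail satisfies
\[
 0\le F-F_N=\sum_{j>N}c_jx^{-a_j}y^{-b_j}\le M_K\sum_{j>N}c_j .
\]
The right-hand side does not depend on the point of \(K\), and it tends to \(0\) because \(\sum c_j<\infty\). This is the Weierstrass \(M\)-test, and it gives \(\sup_K|F-F_N|\to0\). The convergence is also monotone: \(F_N\uparrow F\), which will matter later in the section.

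For the statement about quadratic forms, write \(g-g_N=(F-F_N)g_0\). Because \(g_0\) is continuous on a neighbourhood of the compact set \(K\) (or simply by the upper bound \(g_0\le C_0g_{\mathrm E}\)), the norm \(\sup_K\|g_0\|\) is finite. Hence
\[
 \sup_K\|g-g_N\|\le \sup_K\|g_0\|\,\sup_K|F-F_N|\to0 .
\]
The only real step is the uniform lower bound on the coordinates over \(K\). It follows from compactness of \(K\) inside the open quadrant, and after that the estimate is routine. If \(U^\circ\) is meant to include tangential directions, the same bound holds, since the frozen model does not depend on \(z\).
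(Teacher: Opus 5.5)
Your proof is correct and follows essentially the same route as the paper: a uniform positive lower bound on \(x,y\) over \(K\), together with the compact exponent bounds \(A_0,B_0\), gives the point-independent tail estimate \(0\le F-F_N\le\kappa^{-A_0-B_0}\sum_{j>N}c_j\to0\). Your explicit handling of the quadratic-form claim via boundedness of \(g_0\) on \(K\) fills in a step the paper leaves implicit, and shrinking \(\varepsilon\) below \(1\) is unnecessary, since only \(\kappa\le1\) is used.
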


\begin{proof}
Choose \(\eta>0\) such that \(x,y\ge\eta\) on \(K\).  Compactness of the
exponent closure gives finite numbers \(A_0,B_0\) with
\(0\le a_j\le A_0\) and \(0\le b_j\le B_0\).  Hence
\[
0\le F-F_N
=\sum_{j>N}c_jx^{-a_j}y^{-b_j}
\le \eta^{-A_0-B_0}\sum_{j>N}c_j
\]
on \(K\), and the right-hand side tends to zero.
\end{proof}

\begin{proposition}[Monotone convergence of fixed-curve lengths]
\label{prop:fixed-curve-monotone}
Let \(\gamma:(a,b)\to U^\circ\) be an absolutely continuous curve.  Then
\begin{equation}\label{eq:length-monotone-limit}
L_{g_N}(\gamma)\uparrow L_g(\gamma)
\end{equation}
in \([0,\infty]\).
\end{proposition}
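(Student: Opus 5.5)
The plan is to write the length as an integral against Lebesgue measure in the curve parameter and apply the monotone convergence theorem to the integrands. Fix an absolutely continuous \(\gamma:(a,b)\to U^\circ\). Then
\[
L_{g_N}(\gamma)=\int_a^b F_N(\gamma(t))^{1/2}\,|\dot\gamma(t)|_{g_0}\,\dd t,
\qquad
L_{g}(\gamma)=\int_a^b F(\gamma(t))^{1/2}\,|\dot\gamma(t)|_{g_0}\,\dd t.
\]
Both integrands are defined for almost every \(t\) and take values in \([0,\infty)\). They are measurable because \(F_N\) and \(F\) are continuous on \(U^\circ\) (each is a positive series converging locally uniformly by Proposition~\ref{prop:uniform-convergence-interior}), and \(t\mapsto|\dot\gamma(t)|_{g_0}\) is measurable.

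The key step is pointwise monotonicity. Since \(c_j>0\) and every monomial \(x^{-a_j}y^{-b_j}\) is positive on \(U^\circ\), we have \(F_N\le F_{N+1}\) there. Moreover \(F_N(P)\uparrow F(P)\) at every \(P\in U^\circ\), because the partial sums of a convergent positive series increase to its sum; the sum is finite by Proposition~\ref{prop:interior-finiteness}. The square root is increasing and continuous on \([0,\infty)\), so \(F_N(\gamma(t))^{1/2}|\dot\gamma(t)|_{g_0}\) increases to \(F(\gamma(t))^{1/2}|\dot\gamma(t)|_{g_0}\) for almost every \(t\).

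The monotone convergence theorem on \((a,b)\) then gives \(L_{g_N}(\gamma)\uparrow L_g(\gamma)\) in \([0,\infty]\). This covers the case where the limit is infinite, for instance a curve running into the corner with divergent tail integral. It also shows that \(N\mapsto L_{g_N}(\gamma)\) is nondecreasing.

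There is no real obstacle. The only point needing care is that no uniform domination is available near the ends of \((a,b)\), since the curve may approach \(\partial U\). This is exactly why monotone convergence is used rather than dominated convergence, and why Proposition~\ref{prop:uniform-convergence-interior} is invoked only for continuity and measurability, not for interchanging the limit with the integral.
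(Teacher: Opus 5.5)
Your proposal is correct and matches the paper's proof. Both write the lengths as integrals of the densities \(\sqrt{F_N(\gamma(t))}\,|\dot\gamma(t)|_{g_0}\), use that the partial sums \(F_N\) increase pointwise to \(F\), and conclude by the monotone convergence theorem; your remarks on measurability and on avoiding dominated convergence are accurate details the paper leaves implicit.
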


\begin{proof}
The conformal factors satisfy \(F_N\uparrow F\) pointwise.  Therefore the
nonnegative length densities
\[
\sqrt{F_N(\gamma(t))}\,|\dot\gamma(t)|_{g_0}
\]
increase pointwise to the corresponding \(g\)-length density.  The monotone
convergence theorem gives \eqref{eq:length-monotone-limit}.
\end{proof}

\begin{corollary}[One-sided stability of accessibility]
\label{cor:one-sided-accessibility}
For every fixed curve \(\gamma\):
\begin{enumerate}[label=(\roman*)]
\item if \(L_g(\gamma)<\infty\), then
      \(L_{g_N}(\gamma)<\infty\) for every \(N\) and
      \(L_{g_N}(\gamma)\to L_g(\gamma)\);
\item if \(L_{g_N}(\gamma)=\infty\) for some \(N\), then
      \(L_g(\gamma)=\infty\);
\item it is possible that \(L_{g_N}(\gamma)<\infty\) for every \(N\) but
      \(L_g(\gamma)=\infty\).
\end{enumerate}
Consequently, adding positive Newton terms can destroy accessibility of a
prescribed curve, but cannot create it.
\end{corollary}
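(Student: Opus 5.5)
Parts (i) and (ii) follow at once from Proposition~\ref{prop:fixed-curve-monotone}. For (iii) I will exhibit an explicit example, reusing the critical dyadic construction of Section~\ref{sec:critical}.

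For (i), the monotonicity \(F_N\le F\) gives \(L_{g_N}(\gamma)\le L_g(\gamma)<\infty\) for every \(N\). Convergence \(L_{g_N}(\gamma)\to L_g(\gamma)\) is then exactly \eqref{eq:length-monotone-limit}. For (ii), \(F_N\le F\) pointwise gives \(L_g(\gamma)\ge L_{g_N}(\gamma)=\infty\). Both are one-line deductions once the pointwise inequality \(F_N\le F\) is recorded, and it holds because all \(c_j>0\).

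For (iii), take \(w=(1,1)\), so \(m(w)=1\) and \(\gamma=\gamma_w\) is the diagonal \((r,r,z_0)\). Use the dyadic measure \(\mu_\beta\) of \eqref{eq:dyadic-measure} with \(\beta\le2\), say \(\beta=1\), and \(\alpha_*=(1,1)\). Then \(\alpha_n=(1-2^{-n-1},1-2^{-n-1})\) lies in \(\R_{\ge0}^2\) without rescaling, and \(\langle w,\alpha_*-\alpha_n\rangle=2^{-n}\).

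\emph{Full measure.} The criticality condition \(\cH_\mu(w)=2=2m(w)\) holds. By Lemma~\ref{lem:dyadic-profile}, \(\Lambda_{\mu,w}(s)\asymp s^{-1}\), so \(\int^\infty\Lambda^{1/2}\,\dd s=\infty\). Theorem~\ref{thm:critical-tail} then gives \(L_g(\gamma)=\infty\).

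\emph{Truncations.} Each \(\mu_N\) is finite with maximal total degree \(2-2^{-N}<2\). Hence \(\cH_{\mu_N}(w)<2m(w)\), and Theorem~\ref{thm:strict-gap-radial}(i), equivalently the direct bound \(F_N(r,r)\lesssim r^{-(2-2^{-N})}\), gives \(L_{g_N}(\gamma)<\infty\) for every \(N\).

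The only step needing care is (iii). There I must check that the chosen atoms are nonnegative and that the lengths are computed on a fixed curve in \(U^\circ\) near \(r=0\), as the statement requires. The final "consequently" sentence is then a restatement of (ii) and (iii): positive additions can only increase \(F\), so they can take a finite length to infinity but never the reverse.
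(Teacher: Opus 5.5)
Your proposal is correct and follows the paper's own argument. Parts (i) and (ii) are read off from Proposition~\ref{prop:fixed-curve-monotone} via \(F_N\le F\). Part (iii) is the dyadic critical construction of Example~\ref{ex:critical-loss-under-truncation}, specialized to \(w=(1,1)\), \(\alpha_*=(1,1)\), \(\beta=1\); this choice keeps the atoms nonnegative without the rescaling used in Section~\ref{sec:critical}.
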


\begin{proof}
Parts (i) and (ii) follow immediately from
Proposition~\ref{prop:fixed-curve-monotone}.  Part (iii) is realized by
Example~\ref{ex:critical-loss-under-truncation} below.
\end{proof}

\subsection{Convergence of finite support functions}

Define
\begin{equation}\label{eq:truncated-support-functions}
\cH_N(w)=\max_{1\le j\le N}\langle w,\alpha_j\rangle.
\end{equation}

\begin{proposition}[Uniform projective convergence]
\label{prop:uniform-support-convergence}
Let
\[
\Sigma=\{(p,q)\in\R_{\ge0}^2:p+q=1\}.
\]
Then
\[
\cH_N\uparrow\cH_\mu
\]
pointwise and uniformly on \(\Sigma\).
\end{proposition}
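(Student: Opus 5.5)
The plan is to obtain monotonicity and the pointwise limit first, and then upgrade to uniform convergence on \(\Sigma\) by a Dini-type argument.

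\emph{Step 1: monotonicity and the bound \(\cH_N\le\cH_\mu\).} Monotonicity is immediate, since \(\cH_N\) is a maximum over an increasing family of exponents, so \(\cH_N\le\cH_{N+1}\). Each \(\alpha_j\) lies in \(E=\supp\mu\), because atoms with \(c_j>0\) belong to the support. Hence \(\cH_N\le\cH_\mu\) everywhere. Both functions extend continuously to the closed simplex \(\Sigma\), including its endpoints where \(p=0\) or \(q=0\), by the same formula \(\max\langle w,\alpha\rangle\).

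\emph{Step 2: pointwise convergence.} Fix \(w\in\Sigma\). Since \(E\) is compact, the maximum defining \(\cH_\mu(w)\) is attained at some \(\alpha_*\in E\). Here \(E\) is the closure of \(\{\alpha_j\}\): the support of a countably atomic measure with positive weights is exactly the closure of its atom set. Pick \(\alpha_{j_k}\to\alpha_*\). Then \(\langle w,\alpha_{j_k}\rangle\to\cH_\mu(w)\), so for every \(\eta>0\) some index \(j\) satisfies \(\langle w,\alpha_j\rangle>\cH_\mu(w)-\eta\). For \(N\ge j\) this gives \(\cH_N(w)>\cH_\mu(w)-\eta\), which proves the pointwise limit.

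\emph{Step 3: uniformity.} Each \(\cH_N\) is continuous on the compact set \(\Sigma\), being a finite maximum of linear functions. The limit \(\cH_\mu\) is continuous, and the sequence is monotone increasing. Dini's theorem therefore yields uniform convergence on \(\Sigma\).

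As an alternative avoiding Dini, I would use an equi-Lipschitz estimate. If \(|\alpha|\le K\) on \(E\), then every \(\cH_N\) and \(\cH_\mu\) is \(K\)-Lipschitz on \(\Sigma\). Given \(\eta>0\), cover \(\Sigma\) by finitely many points that are \(\eta/K\)-dense, and choose \(N\) large enough to work at all of them. The Lipschitz bound then transfers the \(\eta\)-closeness to all of \(\Sigma\), with error at most \(3\eta\).

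The only point requiring care is the identification \(E=\overline{\{\alpha_j\}}\), which is needed to match the ``compact exponent closure'' convention. The positive weights \(c_j\) guarantee that every atom lies in the support. Conversely, the complement of the closure of the atom set is an open set of zero measure, so the support is contained in that closure.
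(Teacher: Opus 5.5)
Your proof is correct and follows the paper's argument. Both pass from monotone pointwise convergence to \(\sup_j\langle w,\alpha_j\rangle\), identify this supremum with \(\cH_\mu(w)\) via \(E=\overline{\{\alpha_j\}}\), and conclude with Dini's theorem on the compact set \(\Sigma\); your justification of the support identification, your extension to the endpoints of \(\Sigma\), and the equi-Lipschitz alternative are all sound additions that the paper leaves implicit.
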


\begin{proof}
The functions \(\cH_N\) are continuous and increase pointwise to
\[
\sup_{j\ge1}\langle w,\alpha_j\rangle.
\]
Since the support of the atomic measure is the closure of
\(\{\alpha_j:j\ge1\}\), this supremum equals \(\cH_\mu(w)\).  The limit is
continuous on the compact set \(\Sigma\).  Dini's theorem therefore gives
uniform convergence.
\end{proof}

\begin{corollary}[Stability under a strict projective gap]
\label{cor:strict-gap-truncation}
Let \(K\subset\Sigma\) be compact.
\begin{enumerate}[label=(\roman*)]
\item If
\[
\cH_\mu(w)\le 2\min\{p,q\}-\delta
\qquad(w=(p,q)\in K)
\]
for some \(\delta>0\), then the same inequality with \(\delta/2\) holds
for all sufficiently large \(N\).
\item If
\[
\cH_\mu(w)\ge 2\min\{p,q\}+\delta
\qquad(w=(p,q)\in K),
\]
then the same inequality with \(\delta/2\) holds for all sufficiently large
\(N\).
\end{enumerate}
\end{corollary}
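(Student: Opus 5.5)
The plan is to deduce both parts from the uniform convergence $\cH_N\to\cH_\mu$ on $\Sigma$ established in Proposition~\ref{prop:uniform-support-convergence}, together with the monotonicity $\cH_N\le\cH_\mu$. The key quantity is
\[
\varepsilon_N=\sup_{w\in\Sigma}\bigl(\cH_\mu(w)-\cH_N(w)\bigr)\ge0.
\]
By that proposition, $\varepsilon_N\to0$. Since $K\subset\Sigma$, the same bound holds uniformly on $K$. We then fix $N_0$ such that $\varepsilon_N\le\delta/2$ for all $N\ge N_0$.

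\emph{Part (i).} This direction is immediate from monotonicity and does not even need large $N$. For every $N$ and every $w\in K$,
\[
\cH_N(w)\le\cH_\mu(w)\le2\min\{p,q\}-\delta\le2\min\{p,q\}-\delta/2 .
\]
We only need to note that $\cH_N\le\cH_\mu$ holds pointwise. This follows because each $\alpha_j$ lies in $\supp\mu=E$, so the maximum over finitely many atoms is at most the maximum over $E$.

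\emph{Part (ii).} This is where uniformity is genuinely used. For $N\ge N_0$ and $w\in K$,
\[
\cH_N(w)\ge\cH_\mu(w)-\varepsilon_N\ge2\min\{p,q\}+\delta-\delta/2 ,
\]
which gives the claimed inequality with $\delta/2$.

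The only point requiring care, and hence the main obstacle, is that the threshold $N_0$ must be independent of $w\in K$. Pointwise convergence alone would give only an $N$ depending on $w$. This issue is settled by the Dini argument already carried out in Proposition~\ref{prop:uniform-support-convergence}, which is why $K$ is taken inside the compact set $\Sigma$.

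There is one small caveat. If $\Sigma$ is taken to include the endpoints where $p$ or $q$ is zero, the support functions remain well defined and continuous there, because $E$ is compact and the maximum is attained. So the uniform convergence applies on all of $\Sigma$ and the argument needs no modification.
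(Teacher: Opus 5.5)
Your proof is correct and follows the paper's own argument, which simply invokes the uniform convergence \(\cH_N\uparrow\cH_\mu\) on \(\Sigma\) from Proposition~\ref{prop:uniform-support-convergence}. Your extra observation is also valid: part (i) holds for every \(N\) by the monotonicity \(\cH_N\le\cH_\mu\) alone, so uniformity is needed only in part (ii).
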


\begin{proof}
Apply uniform convergence from
Proposition~\ref{prop:uniform-support-convergence}.
\end{proof}

The corollary shows that truncation is stable away from the critical set.
All genuinely new instability is therefore concentrated where the limiting
support function meets the metric threshold.

\subsection{Critical loss of accessibility}

\begin{example}[Every truncation accessible, critical limit inaccessible]
\label{ex:critical-loss-under-truncation}
Fix a weighted direction \(w\) and choose exponents \(\alpha_n\) satisfying
\begin{equation}\label{eq:critical-approach-exponents}
\langle w,\alpha_n\rangle
=2m(w)-2^{-n},
\qquad m(w)=\min\{p,q\}.
\end{equation}
Let
\begin{equation}\label{eq:critical-approach-weights}
c_n=Z_\beta^{-1}2^{-\beta n},
\qquad
Z_\beta=\sum_{n=1}^{\infty}2^{-\beta n},
\qquad \beta>0.
\end{equation}
For every finite truncation,
\[
\cH_N(w)=2m(w)-2^{-N}<2m(w),
\]
so the weighted radial curve \(\gamma_w\) has finite \(g_N\)-length.
For the infinite measure,
\[
\cH_\mu(w)=2m(w).
\]
The critical-tail calculation of
Section~\ref{sec:critical} gives
\[
\Lambda_{\mu,w}(s)\asymp s^{-\beta}.
\]
Consequently,
\[
L_g(\gamma_w)<\infty
\quad\Longleftrightarrow\quad
\beta>2.
\]
In particular, when \(0<\beta\le2\), every finite truncation is radially
accessible in direction \(w\), whereas the infinite limit is not.
\end{example}

\begin{remark}[Correction of the opposite scenario]
\label{rem:impossible-reverse-truncation}
The reverse transition cannot occur for positive truncations.  It is
impossible for some finite truncation to give infinite length to a fixed
curve while the full infinite interaction gives that curve finite length,
because \(g_N\le g\) and hence
\(L_{g_N}(\gamma)\le L_g(\gamma)\).  Thus the earlier heuristic possibility
that finite truncations might be inaccessible while the infinite positive
limit becomes accessible was incorrect.
\end{remark}

\subsection{Intrinsic distances: a necessary warning}

Let \(d_N\) and \(d\) denote the intrinsic distances induced by \(g_N\) and
\(g\) on \(U^\circ\).  Positivity gives
\begin{equation}\label{eq:distance-monotonicity}
d_N(x_1,x_2)\le d_{N+1}(x_1,x_2)\le d(x_1,x_2).
\end{equation}
Hence \(\bar d=\sup_Nd_N\) exists and satisfies \(\bar d\le d\).  Equality
is not asserted here.  Proposition~\ref{prop:fixed-curve-monotone} concerns
a fixed curve, whereas intrinsic distance takes an infimum over curves, and
in general one cannot interchange the monotone limit with that infimum
without an additional compactness or localization argument.  This is the
precise point at which abstract monotone metric-convergence theory becomes
adjacent but does not automatically settle the present singular local
problem.

\section{Directional classification data and limits of uniqueness}
\label{sec:classification}

\subsection{Relation to inverse-transform questions}

The pair consisting of a support function and a Laplace transform has close
relatives in several established theories.  The support-function limit is a
max-plus or tropical dequantization: it retains the largest weighted exponent
while discarding subleading coefficient information
\cite{Litvinov2013}.  Classical uniqueness theorems for Laplace--Stieltjes
transforms show that a complete one-dimensional Laplace profile determines its
depth pushforward measure.  In a different direction, Cram\'er--Wold type
results study when a multivariate measure is determined by its one-dimensional
projections \cite{BelisleMasseRansford1997}.  These theories concern transform
or measure reconstruction, not metric accessibility.

The comparison changes the appropriate strength of the main claim.  For a fixed
weighted ray, the data \((\cH_\mu(w),\Lambda_{\mu,w})\) determine the frozen
conformal factor exactly, hence they certainly determine its length.  However,
this pair is not a minimal accessibility invariant: away from criticality only
the sign of \(\cH_\mu(w)-2m(w)\) matters, and at criticality only convergence or
divergence of one scalar integral is needed.  Nor does one directional profile
identify the original exponent measure, because it records only the
pushforward under the depth map.  Accordingly, the theorem below is called a
complete \emph{weighted-radial classification}, not a unique classification of
measures or of global corner geometry.

\begin{theorem}[Complete weighted-radial classification]
\label{thm:complete-radial-classification}
Let \(w=(p,q)\in\R_{>0}^2\), put \(m(w)=\min\{p,q\}\), and let
\(\gamma_w(r)=(r^p,r^q,z_0)\).  Then exactly one of the following alternatives
holds:
\begin{enumerate}[label=(\roman*)]
\item If \(\cH_\mu(w)<2m(w)\), then \(L_{g_\mu}(\gamma_w)<\infty\).
\item If \(\cH_\mu(w)>2m(w)\), then \(L_{g_\mu}(\gamma_w)=\infty\).
\item If \(\cH_\mu(w)=2m(w)\), then
\[
L_{g_\mu}(\gamma_w)<\infty
\quad\Longleftrightarrow\quad
\int^{\infty}\Lambda_{\mu,w}(s)^{1/2}\,\dd s<\infty.
\]
\end{enumerate}
Thus weighted-radial accessibility is completely determined by the support
order away from equality and by the exposed-face Laplace tail at equality.
\end{theorem}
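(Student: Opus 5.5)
This theorem is essentially a repackaging of two results already proved, so the plan is to assemble them rather than redo any estimates. The trichotomy is exhaustive and mutually exclusive because, for fixed \(w\), exactly one of \(\cH_\mu(w)<2m(w)\), \(\cH_\mu(w)>2m(w)\), \(\cH_\mu(w)=2m(w)\) holds. It remains to verify each alternative separately for the fixed curve \(\gamma_w(r)=(r^p,r^q,z_0)\).

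For alternatives (i) and (ii) I would invoke Theorem~\ref{thm:strict-gap-radial} directly. Recall its mechanism. By \eqref{eq:frozen-comparison}, \eqref{eq:background-ellipticity} and \eqref{eq:curve-speed}, the length of \(\gamma_w\) is comparable to the model integral \eqref{eq:radial-length-model}. In case (i) the upper bound \(F_\mu^0(r^p,r^q)\le\mu(E)r^{-\cH_\mu(w)}\) yields an integrable power, so the length is finite. In case (ii) the positive-mass set \(A_\eta\) from Proposition~\ref{prop:log-order} yields a nonintegrable lower power, so the length is infinite. No new estimate is needed.

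For alternative (iii) I would invoke Theorem~\ref{thm:critical-tail}. Under \eqref{eq:critical-equality}, the exact factorization of Proposition~\ref{prop:exact-factorization} turns the model integral into
\[
\int_0^{r_0}\Lambda_{\mu,w}(\log(1/r))^{1/2}\,\frac{\dd r}{r},
\]
and the substitution \(s=\log(1/r)\) gives exactly \(\int_{s_0}^\infty\Lambda_{\mu,w}(s)^{1/2}\,\dd s\). Since \(\Lambda_{\mu,w}\) is bounded by \(\mu(E)\), changing the lower limit \(s_0\) alters this integral only by a finite amount. Hence the criterion may be written with an unspecified lower limit, as in the statement.

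I expect no real obstacle. The only point needing care is that the two-sided comparability constants, coming from \(c_0,C_0,c_u,C_u\) and \eqref{eq:curve-speed}, are uniform. This uniformity ensures that finiteness of the model integral is equivalent to finiteness of \(L_{g_\mu}(\gamma_w)\), rather than merely implied in one direction. It also ensures the conclusion is independent of \(z_0\) on compact tangential sets. The closing sentence of the theorem is then a restatement of (i)--(iii).
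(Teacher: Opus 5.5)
Your proposal is correct and follows the paper's proof: alternatives (i) and (ii) come from Theorem~\ref{thm:strict-gap-radial}, (iii) comes from Theorem~\ref{thm:critical-tail}, and exclusivity and exhaustiveness follow from the trichotomy for \(\cH_\mu(w)-2m(w)\). Your added remark is also valid: since \(\Lambda_{\mu,w}\le\mu(E)\), the lower limit of the tail integral is immaterial, which justifies writing the criterion with an unspecified lower limit.
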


\begin{proof}
The subcritical and supercritical alternatives are
Theorem~\ref{thm:strict-gap-radial}.  The equality alternative is
Theorem~\ref{thm:critical-tail}.  The three cases are mutually exclusive and
exhaust all possible values of \(\cH_\mu(w)-2m(w)\).
\end{proof}

\begin{corollary}[Four-valued radial decision datum]
\label{cor:decision-invariant}
For a fixed direction \(w\), define
\[
\mathfrak R_\mu(w)\in
\{\mathrm{subcritical},\mathrm{supercritical},
\mathrm{critical\mbox{-}finite},\mathrm{critical\mbox{-}infinite}\}
\]
by
\[
\mathfrak R_\mu(w)=
\begin{cases}
\mathrm{subcritical},
 &\cH_\mu(w)<2m(w),\\
\mathrm{supercritical},
 &\cH_\mu(w)>2m(w),\\
\mathrm{critical\mbox{-}finite},
 &\begin{aligned}
   &\cH_\mu(w)=2m(w),\\[-2pt]
   &\displaystyle\int^\infty\sqrt{\Lambda_{\mu,w}(s)}\,\dd s<\infty,
  \end{aligned}\\
\mathrm{critical\mbox{-}infinite},
 &\begin{aligned}
   &\cH_\mu(w)=2m(w),\\[-2pt]
   &\displaystyle\int^\infty\sqrt{\Lambda_{\mu,w}(s)}\,\dd s=\infty.
  \end{aligned}
\end{cases}
\]
Then \(\gamma_w\) has finite length exactly in the subcritical and
critical-finite regimes.  Thus the full function \(\Lambda_{\mu,w}\) is
sufficient but not minimal for the binary radial accessibility decision.
\end{corollary}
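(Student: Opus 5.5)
The plan is to read the corollary off Theorem~\ref{thm:complete-radial-classification}, since the four labels of \(\mathfrak R_\mu(w)\) simply relabel its three alternatives. I will first check that \(\mathfrak R_\mu(w)\) is well defined, meaning that exactly one of the four cases applies. The real number \(\cH_\mu(w)-2m(w)\) is negative, positive, or zero. In the zero case, the integral \(\int^\infty\sqrt{\Lambda_{\mu,w}(s)}\,\dd s\) of a nonnegative function is either finite or \(+\infty\). So the cases are mutually exclusive and exhaustive. Independence of the lower limit of integration follows because \(\Lambda_{\mu,w}\) is bounded by \(\mu(E)\), so any finite initial segment contributes a finite amount.

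Next I will match each label with the corresponding alternative of Theorem~\ref{thm:complete-radial-classification}. In the subcritical case, part (i) gives finite length. In the supercritical case, part (ii) gives infinite length. In the two critical cases, part (iii) says finite length is equivalent to convergence of the tail integral. So the length is finite exactly in the critical-finite case and infinite in the critical-infinite case. This proves the main assertion.

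For the closing sentence, sufficiency is immediate. Given \(\Lambda_{\mu,w}\), one recovers \(\cH_\mu(w)\) from the frozen factor through the factorization \eqref{eq:exact-factorization}, or one simply takes it as part of the data \((\cH_\mu(w),\Lambda_{\mu,w})\) as in the preceding discussion. One also recovers the integral's convergence, so the decision is determined.

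For non-minimality I will exhibit two directions, or two measures, with different profiles but the same binary answer. The dyadic measures \(\mu_{\beta}\) of Lemma~\ref{lem:dyadic-profile} work: for distinct \(\beta,\beta'>2\) the profiles satisfy \(\Lambda\asymp s^{-\beta}\) and \(\Lambda\asymp s^{-\beta'}\). These are genuinely different functions, yet both cases are critical-finite by Theorem~\ref{thm:same-support-opposite-length}. Similarly, in a strict-gap regime, any two measures with \(\cH_\mu(w)<2m(w)\) but different profiles give the same outcome.

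The only mildly delicate point is the precise meaning of ``not minimal''. I would interpret it as saying that the map from profiles to decisions is many-to-one, which the examples above certify. The rest is bookkeeping.
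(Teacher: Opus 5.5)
Your proposal is correct and matches the paper, whose proof is just that the corollary restates Theorem~\ref{thm:complete-radial-classification}; your non-minimality argument via dyadic measures with distinct \(\beta,\beta'>2\) is exactly Example~\ref{ex:profile-not-minimal}. One small correction: the profile \(\Lambda_{\mu,w}\) alone does not determine \(\cH_\mu(w)\). A single atom \(\delta_\alpha\) gives \(\Lambda\equiv1\) wherever \(\alpha\) sits, so with \(w=(1,1)\) the choices \(\alpha=(1,1)\) and \(\alpha=(1/2,1/2)\) give the same profile but opposite decisions. Hence \emph{sufficiency} must be read for the pair \((\cH_\mu(w),\Lambda_{\mu,w})\), as in your second option, and not by recovering \(\cH_\mu(w)\) from the profile through the frozen factor.
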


\begin{proof}
This is an immediate restatement of
Theorem~\ref{thm:complete-radial-classification}.
\end{proof}

\begin{proposition}[One directional profile does not identify the measure]
\label{prop:profile-nonuniqueness}
Fix \(w\in\R_{>0}^2\).  There exist distinct compactly supported finite
positive measures \(\mu_1\ne\mu_2\) on \(\R_{\ge0}^2\) such that
\[
\cH_{\mu_1}(w)=\cH_{\mu_2}(w)
\quad\text{and}\quad
\Lambda_{\mu_1,w}(s)=\Lambda_{\mu_2,w}(s)
\quad(s\ge0).
\]
Consequently, the directional pair classifies the pullback along
\(\gamma_w\), but not the full exponent measure.
\end{proposition}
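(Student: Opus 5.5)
The construction places two measures on one line orthogonal to $w$. Then the depth pushforward $\nu_w=(\Delta_w)_\#\mu$ cannot distinguish them, while the measures themselves differ. Write $w=(p,q)$ with $p,q>0$ and let $w^\perp=(q,-p)$. Choose a base point $\alpha_0\in\R_{>0}^2$ and a small $\tau>0$ such that the segment $\{\alpha_0+t w^\perp: |t|\le\tau\}$ stays inside $\R_{>0}^2$; this is possible because $\alpha_0$ lies in the open quadrant. Set
\[
\mu_1=\delta_{\alpha_0},\qquad
\mu_2=\tfrac12\bigl(\delta_{\alpha_0+\tau w^\perp}+\delta_{\alpha_0-\tau w^\perp}\bigr).
\]
Both are finite, positive and compactly supported, and both have total mass $1$. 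They are distinct because $\mu_1$ has an atom at $\alpha_0$ and $\mu_2$ gives $\alpha_0$ no mass. Note that the supports themselves differ, which is allowed, since the statement only asks for $\mu_1\ne\mu_2$.

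Next comes the verification. Every support point $\alpha$ of either measure satisfies $\langle w,\alpha\rangle=\langle w,\alpha_0\rangle$, because $\langle w,w^\perp\rangle=0$. By \eqref{eq:support-function} this gives $\cH_{\mu_1}(w)=\cH_{\mu_2}(w)=\langle w,\alpha_0\rangle$. The depth map $\Delta_w$ of \eqref{eq:depth-map} therefore vanishes identically on each support, so $\nu_w=\delta_0$ in both cases with the same unit mass. By Definition~\ref{def:laplace-profile}, $\Lambda_{\mu_1,w}(s)=\Lambda_{\mu_2,w}(s)=1$ for all $s\ge0$.

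For the consequence, Proposition~\ref{prop:exact-factorization} shows that $F^0_{\mu_i}(r^p,r^q)$ depends only on the pair $(\cH_{\mu_i}(w),\Lambda_{\mu_i,w})$. Hence the two frozen pullbacks along $\gamma_w$ agree. This holds even though $\mu_1\neq\mu_2$, and even though their support functions differ in other directions, for instance at $w'\ne w$ where $\langle w',w^\perp\rangle\ne0$.

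There is no real obstacle. The only point needing care is keeping the perturbed atoms inside $\R_{\ge0}^2$, which the choice of $\alpha_0$ in the open quadrant and small $\tau$ handles. If one prefers an example exhibiting the critical-tail phenomenon, any measure can be replaced by its sheared copy along $w^\perp$, and the same computation applies. One could also note that any two measures with equal depth pushforwards work, so the family of such examples is large.
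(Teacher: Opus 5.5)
Your construction is correct and uses the same mechanism as the paper: moving mass along a level line of $\langle w,\cdot\rangle$ changes the measure but leaves both $\cH_\mu(w)$ and the depth pushforward $\nu_w$ unchanged, and hence leaves $\Lambda_{\mu,w}$ unchanged. The paper keeps a common atom $a\delta_{\alpha_*}$ on the exposed face and moves a second atom along a strictly lower level line, so the exposed faces coincide and the profile $a+b\,e^{-s\langle w,\alpha_*-\alpha\rangle}$ is nonconstant, whereas you split a single atom along the top level line and get $\Lambda\equiv1$; one small caution is that your closing aside should require equal pushforwards under $\alpha\mapsto\langle w,\alpha\rangle$, not merely equal depth pushforwards, since $\nu_w$ alone does not determine $\cH_\mu(w)$.
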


\begin{proof}
Choose a point \(\alpha_*\in\R_{>0}^2\) and two distinct points
\(\alpha,\widetilde\alpha\in\R_{\ge0}^2\) lying on the same affine level set
of \(\langle w,\cdot\rangle\), strictly below \(\alpha_*\):
\[
\langle w,\alpha\rangle
=
\langle w,\widetilde\alpha\rangle
<
\langle w,\alpha_*\rangle.
\]
For positive constants \(a,b\), set
\[
\mu_1=a\delta_{\alpha_*}+b\delta_{\alpha},
\qquad
\mu_2=a\delta_{\alpha_*}+b\delta_{\widetilde\alpha}.
\]
The two measures are distinct.  Their support values in direction \(w\) are
both \(\langle w,\alpha_*\rangle\), and their depth pushforwards are both
\[
a\delta_0+b\delta_{\langle w,\alpha_*\rangle-
\langle w,\alpha\rangle}.
\]
Hence their directional Laplace profiles coincide for every \(s\ge0\).
\end{proof}

\begin{example}[Different critical profiles with the same decision]
\label{ex:profile-not-minimal}
In the dyadic family of Section~\ref{sec:critical}, any two parameters
\(\beta_1,\beta_2>2\) give different asymptotic profiles
\(\Lambda_{\mu_{\beta_i},w}(s)\asymp s^{-\beta_i}\), while both yield finite
critical length.  Likewise, any two parameters in \((0,2]\) yield different
profiles but the same divergent decision.  Thus accessibility does not recover
the complete profile even within a fixed-support atomic family.
\end{example}

\begin{remark}[Fixed-ray data versus the full corner fibre]
Theorem~\ref{thm:complete-radial-classification} classifies each prescribed
weighted ray by its directional profile.  The full local corner fibre is
instead governed by the total-degree profile through
Theorem~\ref{thm:complete-local-completion-classification}.  No claim is made
that either the binary accessibility decision or the full family of such
decisions uniquely reconstructs the exponent measure.
\end{remark}

\section{Sharpness and the role of the hypotheses}
\label{sec:sharpness}

The completion theorem is formulated for a deliberately uniform positive
class.  This section separates assumptions that are structurally indispensable
for a classification in terms of the exponent measure from assumptions that
mainly provide a convenient closed framework.  The counterexamples are local
and use the Euclidean background unless stated otherwise.

\subsection{Two-sided coefficient comparison is essential}

The measure-only invariant cannot classify the metric when the coefficient is
allowed either to vanish or to blow up at the corner.

\begin{proposition}[Failure without the uniform lower coefficient bound]
\label{prop:no-lower-bound}
Let $E=\{(2,0)\}$, $\mu=\delta_{(2,0)}$, and
\[
 u((2,0);x,y,z)=x^2.
\]
Then $A_\mu=2$ and $\mathcal T_\mu(S)\equiv1$, so the tail integral associated
with the exponent measure diverges.  Nevertheless
\[
 F_\mu(x,y,z)=x^2x^{-2}=1,
\]
and the local metric completion is the ordinary Euclidean closure.  In
particular, every corner point occurs at finite distance.
\end{proposition}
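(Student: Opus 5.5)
The plan is to verify each assertion of the proposition by direct computation, since the example is completely explicit; the only care needed concerns the admissibility framework, which this example deliberately violates.

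First, the exponent data. Since \(E=\{(2,0)\}\) and \(\mu=\delta_{(2,0)}\), the maximal total degree is \(A_\mu=2+0=2\). The total-degree depth \(\delta(2,0)=2-2-0=0\) then gives, via \eqref{eq:total-degree-profile},
\[
\mathcal T_\mu(S)=e^{-S\cdot0}\mu(E)=1
\]
for every \(S\ge0\). Hence \(\int^\infty\sqrt{\mathcal T_\mu(S)}\,\dd S=\infty\). Were the coefficient admissible, Theorem~\ref{thm:complete-local-completion-classification}(ii) would therefore predict an empty fibre.

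Second, the conformal factor. Substituting into \eqref{eq:measure-model} gives
\[
F_\mu(x,y,z)=u((2,0);x,y,z)\,x^{-2}=x^2x^{-2}=1
\]
on \(U^\circ\). With the Euclidean background \(g_0=g_{\mathrm E}\) used throughout the section, we get \(g_\mu=g_{\mathrm E}\). Consequently \(d_\mu\) on the convex set \(U_B\) is just the Euclidean distance: every segment lies in \(U_B\), and straight segments minimize. Convexity of \(U_B=(0,\varepsilon)^2\times B\) follows because \(B\) is a convex ball.

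Third, the completion. The completion of a convex open Euclidean set under its Euclidean metric is its closure \([0,\varepsilon]^2\times\overline B\), with the Euclidean distance extended. In particular each corner point \((0,0,z_0)\) is a single completion point. The segment \(r\mapsto(r,r,z_0)\) reaches it with finite length \(\sqrt2\,r_0\), so the point lies at finite distance.

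The only subtle step is the conceptual one: checking that the example violates exactly the lower bound in \eqref{eq:coefficient-bounds} and nothing else. The function \(u=x^2\) is positive, continuous and bounded above on \(U^\circ\), but \(\inf u=0\). So the discrepancy with Theorem~\ref{thm:complete-local-completion-classification} signals that this hypothesis is necessary rather than that the theorem is contradicted.
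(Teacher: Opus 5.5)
Your proposal is correct and follows essentially the same argument as the paper: once \(F_\mu\equiv1\), the metric \(g_\mu\) reduces to the background, and everything else is immediate. The only difference is minor. The paper handles a general \(g_0\) by appealing to uniform ellipticity, which identifies the completion bi-Lipschitzly with the Euclidean closure. You instead specialize to \(g_0=g_{\mathrm E}\), following the section's stated convention, and use convexity of \(U_B\) to identify \(d_\mu\) exactly with the Euclidean distance.
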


\begin{proof}
The displayed identity reduces $g_\mu$ to $g_0$.  The conclusion follows from
uniform ellipticity of $g_0$.
\end{proof}

Thus the lower bound in \eqref{eq:coefficient-bounds} is not merely used to
simplify estimates: it prevents a coefficient from erasing the singular order
recorded by the support.

\begin{proposition}[Failure without the uniform upper coefficient bound]
\label{prop:no-upper-bound}
Let $E=\{(0,0)\}$, $\mu=\delta_{(0,0)}$, and, on a sufficiently small corner
chart, set
\[
 u((0,0);x,y,z)=\frac{1}{x^2+y^2}.
\]
Then $A_\mu=0$ and
\[
 \int^\infty \sqrt{\mathcal T_\mu(S)}\,\dd S
 =\int^\infty e^{-S}\,\dd S<\infty,
\]
but every curve approaching the corner has infinite length.  Hence the
completion fibre is empty.
\end{proposition}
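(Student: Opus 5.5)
First I compute the Laplace-tail side, and then I show that every curve approaching the corner has infinite length. Since \(\mu=\delta_{(0,0)}\), we have \(A_\mu=0\) and \(\delta(\alpha)=2\) on the support, so \(\mathcal T_\mu(S)=e^{-2S}\) and \(\sqrt{\mathcal T_\mu(S)}=e^{-S}\). This is integrable at infinity, which is the displayed identity. The conformal factor is \(F_\mu(x,y,z)=u\cdot x^0y^0=(x^2+y^2)^{-1}\). I would check that this \(u\) satisfies every hypothesis of Definition~\ref{def:measure-interaction} except the upper bound \(C_u\) in \eqref{eq:coefficient-bounds}. It is positive, continuous, and bounded below by \(1/(2\varepsilon^2)\) on the chart. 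So exactly the upper coefficient bound fails.

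For the infinite-length claim, I use the scale \(\rho=\sqrt{x^2+y^2}\). Let \(\gamma(t)=(x(t),y(t),z(t))\) be a locally absolutely continuous curve with \(x,y\to0\). Then \(\rho\) is absolutely continuous, positive on \(U^\circ\), and tends to \(0\). Almost everywhere \(|\dot\rho|\le\sqrt{\dot x^2+\dot y^2}\le|\dot\gamma|_{\mathrm E}\). Lower ellipticity \eqref{eq:background-ellipticity} then gives
\[
|\dot\gamma|_{g_\mu}\ge \sqrt{c_0}\,\rho^{-1}|\dot\rho|=\sqrt{c_0}\,\Bigl|\tfrac{\dd}{\dd t}\log\rho\Bigr|.
\]
Integrate over a parameter interval on which \(\rho\) falls from \(\rho(t_1)\) to \(\rho(t)\). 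The length is at least \(\sqrt{c_0}\log(\rho(t_1)/\rho(t))\), and this tends to \(\infty\) as \(t\uparrow1\). This is the same total-variation argument as in Theorem~\ref{thm:finite-newton-self-contained} and Lemma~\ref{lem:path-independent-total-degree-lower}, with \(-\log\rho\) as the primitive.

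Since every corner-approaching curve has infinite length, Lemma~\ref{lem:cauchy-finite-curve} shows that the completion fibre over each corner point is empty. That lemma uses only Lemma~\ref{lem:euclidean-lower-control}, which still holds because \(F_\mu\) is bounded below.

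There is no real obstacle here. The only point needing care is confirming that the lemmas I invoke rely only on the lower bound of \(u\) and never on the missing upper bound. For Lemma~\ref{lem:cauchy-finite-curve}, this means checking that its argument goes only through Euclidean lower control. I would close by contrasting this with case (iii) of Theorem~\ref{thm:complete-local-completion-classification}: the measure-only data predict a singleton fibre, yet the fibre is empty. This shows the upper bound \(C_u\) cannot be dropped.
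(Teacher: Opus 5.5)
Your proof is correct and follows the paper's argument. You use the scale $r=(x^2+y^2)^{1/2}$, the bound $|r'|\le|\gamma'|_{\mathrm E}$ and lower ellipticity to get $|\gamma'|_{g_\mu}\ge c\,|r'|/r$, hence logarithmic divergence, and then Lemma~\ref{lem:cauchy-finite-curve} to empty the fibre. Your additional checks make explicit what the paper leaves implicit: that $\mathcal T_\mu(S)=e^{-2S}$, that only the upper coefficient bound fails, and that Euclidean lower control survives because $F_\mu$ is bounded below.
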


\begin{proof}
Here $F_\mu=(x^2+y^2)^{-1}$.  Put $r=(x^2+y^2)^{1/2}$.  Uniform lower
ellipticity and $|r'|\le |\gamma'|_{\mathrm E}$ give
\[
 |\gamma'|_{g_\mu}\ge c\frac{|r'|}{r}.
\]
Every curve with $r\to0$ therefore has length at least
$c\int_0^{r_0}\dd r/r=\infty$.  Lemma~\ref{lem:cauchy-finite-curve} then
implies that the completion fibre is empty.
\end{proof}

Together, Propositions~\ref{prop:no-lower-bound} and
\ref{prop:no-upper-bound} show that the exponent measure determines the
completion only up to a uniform two-sided multiplicative comparison.  A more
general theory would have to include the asymptotic order of $u$ as additional
data.

\subsection{The nonnegative exponent cone prevents boundary collapse}

The restriction $E\subset\mathbb R_{\ge0}^2$ is used both in the common-scale
upper construction and in Euclidean lower control.  Allowing negative
exponents can make the conformal factor vanish and can identify distinct
Euclidean boundary points.

\begin{proposition}[Collapse when negative exponents are admitted]
\label{prop:negative-exponent-collapse}
Let $E=\{(-2,0)\}$, $\mu=\delta_{(-2,0)}$, $u\equiv1$, and $g_0=g_{\mathrm E}$.
Then
\[
 g_\mu=x^2g_{\mathrm E}.
\]
For any two tangential points $z,z'$ in a fixed compact ball, the sequences
\[
 P_n=(n^{-1},n^{-1},z),\qquad
 Q_n=(n^{-1},n^{-1},z')
\]
are Cauchy and satisfy $d_\mu(P_n,Q_n)\to0$.  Hence distinct Euclidean corner
points represent the same completion point, and the unique-Euclidean-limit
property in Lemma~\ref{lem:euclidean-lower-control} fails.
\end{proposition}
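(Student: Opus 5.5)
The proof is a direct computation followed by an explicit curve construction. With \(\mu=\delta_{(-2,0)}\) and \(u\equiv1\), the frozen model gives \(F_\mu(x,y,z)=x^{2}\), so \(g_\mu=x^2g_{\mathrm E}\). I will show that \(d_\mu(P_n,Q_n)\to0\) and that each sequence is Cauchy by bounding \(d_\mu\) from above by lengths of explicit piecewise-linear curves.

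\emph{Step 1: crossing between the two sequences.} Join \(P_n\) to \(Q_n\) by the tangential segment \(t\mapsto(n^{-1},n^{-1},(1-t)z+tz')\), \(t\in[0,1]\). This stays in \(U_B\) by convexity of \(B\). Along it \(x=n^{-1}\) is constant, so its \(g_\mu\)-length is exactly \(n^{-1}|z-z'|\). Hence
\[
d_\mu(P_n,Q_n)\le n^{-1}|z-z'|\longrightarrow0 .
\]

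\emph{Step 2: the Cauchy property.} For \(P_n\) and \(P_m\), use the diagonal segment \(r\mapsto(r,r,z)\) with \(r\) between \(m^{-1}\) and \(n^{-1}\). Its speed is \(|\dot\gamma|_{g_\mu}=r\sqrt2\), so its length is
\[
\frac{\sqrt2}{2}\,\bigl|n^{-2}-m^{-2}\bigr| ,
\]
which tends to zero. Thus \((P_n)\) is Cauchy, and by the same argument so is \((Q_n)\).

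\emph{Step 3: conclusion.} Steps 1 and 2 show that both sequences define the same point of the metric completion. Their Euclidean limits are \((0,0,z)\ne(0,0,z')\) whenever \(z\ne z'\). So no map from completion classes to Euclidean limits can be well defined. This is exactly where Lemma~\ref{lem:euclidean-lower-control} breaks down: its lower bound \(F_\mu\ge c_u\mu(E)\) relied on \(x^{-\alpha_1}\ge1\), which fails for \(\alpha_1<0\) since here \(F_\mu=x^2\to0\). The conclusion holds even without a uniform lower bound on \(F_\mu\), and there is no real obstacle; the only care needed is that every curve stays inside \(U_B\), which the choice of \(n\) large and convexity of \(B\) guarantee.
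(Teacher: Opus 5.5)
Your proposal is correct and follows the paper's proof essentially verbatim. The tangential segment at height $x=y=n^{-1}$ gives $d_\mu(P_n,Q_n)\le n^{-1}|z-z'|$, and the diagonal segment gives the Cauchy bound $2^{-1/2}|n^{-2}-m^{-2}|$; your remarks on staying inside $U_B$ and on the failure of $x^{-\alpha_1}\ge1$ for $\alpha_1<0$ are accurate additions, not a different route.
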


\begin{proof}
At fixed $x=n^{-1}$ and $y=n^{-1}$, the straight tangential segment has length
$n^{-1}|z-z'|$.  This proves $d_\mu(P_n,Q_n)\to0$.  Each sequence is Cauchy because the diagonal segment
$t\mapsto(t,t,z)$ from $t=n^{-1}$ to $t=m^{-1}$ has length
$\sqrt2\int t\,\dd t=2^{-1/2}|n^{-2}-m^{-2}|$.  Thus the two sequences define the same completion class
although their Euclidean limits are different.
\end{proof}

This example does not say that metrics with vanishing conformal factors are
uninteresting.  It shows that they require a different boundary map: the
completion need no longer project injectively to the Euclidean corner stratum.

\subsection{Background ellipticity is structurally necessary}

The proofs use only bi-Lipschitz comparison with a smooth nondegenerate
background metric.  If the background degenerates, it can cancel the Newton
singularity completely.

\begin{proposition}[Failure for a degenerate background]
\label{prop:degenerate-background}
Let $E=\{(2,2)\}$, $\mu=\delta_{(2,2)}$, $u\equiv1$, and take on the interior
\[
 g_0=x^2y^2g_{\mathrm E}.
\]
Then $A_\mu=4>2$, whereas
\[
 g_\mu=x^{-2}y^{-2}g_0=g_{\mathrm E}.
\]
Thus the corner lies at finite distance despite the supercritical exponent.
\end{proposition}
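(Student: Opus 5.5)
The proof has three parts. First verify that the data satisfy the standing hypotheses so that \(g_\mu\) is what the statement claims. Then identify \(g_\mu\) with the Euclidean metric. Finally exhibit an explicit finite-length curve reaching the corner.

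For the first part, \(E=\{(2,2)\}\) is compact in \(\R_{\ge0}^2\) and \(\mu=\delta_{(2,2)}\) is a finite nonzero positive Radon measure with \(\supp\mu=E\). The coefficient \(u\equiv1\) satisfies \eqref{eq:coefficient-bounds} with \(c_u=C_u=1\). Definition~\eqref{eq:measure-model} then gives \(F_\mu=x^{-2}y^{-2}\), and from \eqref{eq:max-total-degree} we read off \(A_\mu=2+2=4\). Multiplying this factor by the prescribed background gives
\[
g_\mu=x^{-2}y^{-2}\cdot x^2y^2g_{\mathrm E}=g_{\mathrm E}
\]
on the interior \(U^\circ\). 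This is a pointwise identity of quadratic forms, so \(g_\mu\)-lengths of curves coincide with Euclidean lengths.

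For the finite-distance claim, take the diagonal segment \(\gamma(r)=(r,r,z_0)\) for \(0<r<r_0\). It lies in \(U^\circ\) and its \(g_\mu\)-length equals its Euclidean length \(\sqrt2\,r_0<\infty\). By Definition~\ref{def:global-accessibility} the corner is therefore locally accessible. Lemma~\ref{lem:cauchy-finite-curve} then shows that the completion fibre over \((0,0,z_0)\) is nonempty; if we work over \(U_B\), the fibre is exactly the Euclidean point. This contradicts what Lemma~\ref{lem:supercritical-support-obstruction} and Theorem~\ref{thm:complete-local-completion-classification}(i) would predict for \(A_\mu=4>2\).

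The only point needing comment is why this is not a contradiction with those results. The background \(g_0=x^2y^2g_{\mathrm E}\) violates \eqref{eq:background-ellipticity}: it has no uniform positive lower constant \(c_0\) as \(x\) or \(y\) tends to \(0\). The lower bound \(g_0\ge c_0g_{\mathrm E}\) is used in Lemma~\ref{lem:euclidean-lower-control} and in Theorem~\ref{thm:finite-newton-self-contained}, and it fails here. I would state this explicitly to conclude that ellipticity of \(g_0\) cannot be dropped. There is no real obstacle beyond this bookkeeping.
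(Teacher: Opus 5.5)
Your proposal is correct and matches the paper. The paper simply records the identity $g_\mu=x^{-2}y^{-2}\cdot x^2y^2g_{\mathrm E}=g_{\mathrm E}$ and remarks that the example escapes Theorem~\ref{thm:complete-local-completion-classification} only because $g_0$ fails the lower bound in \eqref{eq:background-ellipticity}; your diagonal curve of length $\sqrt2\,r_0$ just makes the finite-distance claim explicit. One small caveat: Lemma~\ref{lem:cauchy-finite-curve} is proved via Lemma~\ref{lem:euclidean-lower-control}, which uses exactly the ellipticity that fails here, so citing it is formally outside its hypotheses. This is harmless, because $g_\mu=g_{\mathrm E}$ and $U_B$ is convex, so $d_\mu$ is the Euclidean distance on $U_B$ and the singleton-fibre conclusion holds directly.
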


The example lies outside the standing class because $g_0$ is not uniformly
elliptic up to the corner.  It shows that the theorem is invariant under
uniform bi-Lipschitz changes of background, but not under singular or
degenerate changes.

\subsection{Positivity and canonical support}

Positivity has two distinct roles.  First, it guarantees that $F_\mu$ is a
Riemannian conformal factor.  Second, it prevents cancellation, which is what
turns support neighborhoods into lower metric bounds.  With signed or
complex coefficients, a raw list of exponents is not canonical: equal leading
orders may cancel before the metric is formed.  Consequently the support
function of an unreduced signed representation cannot be an intrinsic metric
invariant.  A signed theory would first need a canonical noncancelling leading
object and separate hypotheses guaranteeing $F>0$.  The present
empty-or-singleton theorem is therefore genuinely a positive-interaction
result.

\subsection{Compactness and regularity are convenient, not fully sharp}

Compact exponent support is sufficient for all of the following at once:
interior finiteness, attainment and continuity of support functions, uniform
ray estimates, and positive mass near a maximizing support point.  It is not
claimed to be necessary.  For example, an unbounded atomic support with
super-exponentially decaying masses can still define a finite interior
interaction.  The arguments suggest that compactness could be replaced by a
collection of exponential-moment and support-attainment conditions adapted to
the directions under study.  Such a reformulation would lengthen the
hypotheses without changing the geometric mechanism, so it is not pursued
here.

Likewise, continuity of $F_\mu$ is a convenient condition ensuring a standard
Riemannian length metric.  Most estimates only require a positive Borel density
with suitable local bounds and an intrinsic length structure.  No sharp
regularity threshold is asserted.

The finiteness and nontriviality of $\mu$ are normalization-level assumptions.
Nontriviality is necessary for $F_\mu>0$.  Finiteness can be weakened whenever
the exponent integral is locally finite and the comparison estimates remain
valid, but a finite measure makes the model and its Laplace profiles
unambiguous.

\subsection{Sharpness status}

The role of the standing assumptions can be summarized as follows.
\begin{center}
\begin{tabular}{p{0.29\textwidth}p{0.24\textwidth}p{0.37\textwidth}}
\toprule
Hypothesis & Status in this paper & Reason \\
\midrule
Positive measure and coefficients & Structural & Prevents cancellation and supplies all lower bounds. \\
Uniform lower bound on $u$ & Essential for a measure-only criterion & Proposition~\ref{prop:no-lower-bound}. \\
Uniform upper bound on $u$ & Essential for a measure-only criterion & Proposition~\ref{prop:no-upper-bound}. \\
Nonnegative exponents & Structural for the stated boundary projection & Proposition~\ref{prop:negative-exponent-collapse}. \\
Uniform ellipticity of $g_0$ & Essential up to bi-Lipschitz replacement & Proposition~\ref{prop:degenerate-background}. \\
Compact exponent support & Sufficient, not claimed sharp & May be replaced by directional moment and attainment hypotheses. \\
Continuity of $F_\mu$ & Technical regularity & Measurable length-density variants should be possible. \\
Finite nonzero $\mu$ & Convenient normalization & Local finiteness and positivity are the true requirements. \\
\bottomrule
\end{tabular}
\end{center}

Accordingly, the sharp claim of the paper is not that every assumption is
minimal.  It is that within the natural positive, uniformly comparable Newton
class, the support-plus-tail criterion is exact, and that the hypotheses which
make this criterion representation-independent cannot be discarded without
adding new asymptotic data.

\section{Scope of the completion theorem}

Theorem~\ref{thm:complete-local-completion-classification} is a local theorem
for one codimension-two corner chart.  It does not by itself identify the
global completion of an arbitrary manifold with corners: distinct charts or
strata may interact through shorter curves outside the chosen neighborhood.
The local conclusion is nevertheless intrinsic in the following precise
sense.  Within a sufficiently small product neighborhood, the fibre over each
corner point is empty or a singleton, and the alternative is decided by the
total-degree support and tail criteria stated above.

The theorem also depends essentially on positivity and uniform two-sided
coefficient comparison.  Signed cancellation, coefficients that vanish on
selected scales, or metrics without uniform background ellipticity fall
outside the classification.  These restrictions are part of the theorem,
not merely technical conveniences.

\section{Outlook: beyond exact exponent-measure models}
\label{sec:outlook-comparability}

The exact exponent-measure representation used throughout the paper is
structurally restrictive, but the completion classification is stable under
uniform two-sided comparison. This observation enlarges the geometric scope
of the theory without changing its principal invariant.

\begin{definition}[Newton-comparable density]
Let \(F_\mu\) be a positive Newton interaction as in
Definition~\ref{def:measure-interaction}. A positive density
\(\rho\) is \emph{locally Newton-comparable} to \(F_\mu\) near the corner if
there are constants \(0<c\le C<\infty\) such that
\begin{equation}\label{eq:newton-comparable}
cF_\mu(x,y,z)\le \rho(x,y,z)\le CF_\mu(x,y,z)
\end{equation}
throughout a sufficiently small punctured product neighborhood.
\end{definition}

\begin{proposition}[Transfer under Newton comparability]
\label{prop:newton-comparable-transfer}
Let
\[
g_\rho=\rho g_0,
\qquad
g_\mu=F_\mu g_0,
\]
and assume \eqref{eq:newton-comparable}. Then, for the local path metrics
defined using curves contained in the comparison neighborhood,
\[
\sqrt c\,d_\mu\le d_\rho\le \sqrt C\,d_\mu.
\]
Consequently, the identity on the punctured neighborhood extends to a
bi-Lipschitz bijection of the corresponding local metric completions.
In particular, the fibre over a prescribed Euclidean corner point is empty
for \(g_\rho\) if and only if it is empty for \(g_\mu\), and it is a singleton
for \(g_\rho\) if and only if it is a singleton for \(g_\mu\).
\end{proposition}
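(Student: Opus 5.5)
The plan is to compare lengths curve by curve, pass to infima, and then push the resulting distance inequality through the completion. First, on the comparison neighbourhood, the inequality \eqref{eq:newton-comparable} gives $c\,g_\mu\le g_\rho\le C\,g_\mu$ as quadratic forms. Hence for every locally absolutely continuous curve $\gamma$ contained in that neighbourhood,
\[
\sqrt c\,L_{g_\mu}(\gamma)\le L_{g_\rho}(\gamma)\le \sqrt C\,L_{g_\mu}(\gamma),
\]
by pointwise comparison of the length densities $\sqrt{\rho(\gamma)}\,|\dot\gamma|_{g_0}$ and $\sqrt{F_\mu(\gamma)}\,|\dot\gamma|_{g_0}$. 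Both local path metrics take their infimum over the same admissible family of curves, namely those inside the comparison neighbourhood. Taking infima therefore yields $\sqrt c\,d_\mu\le d_\rho\le\sqrt C\,d_\mu$ directly.

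Second, the identity map $(U_B,d_\mu)\to(U_B,d_\rho)$ is bi-Lipschitz, so it carries Cauchy sequences to Cauchy sequences in both directions. It also preserves equivalence of Cauchy sequences, since $d_\rho(P_n,Q_n)\to0$ if and only if $d_\mu(P_n,Q_n)\to0$. The map therefore induces a well-defined bijection of the metric completions. The same two-sided inequality passes to the completed distances by taking limits, so this bijection is bi-Lipschitz with constants $\sqrt c,\sqrt C$.

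Third, I check that this bijection preserves fibres over a given corner point $p$. Lemma~\ref{lem:euclidean-lower-control} uses only a positive lower bound on the conformal factor. For $\rho$ this lower bound holds because $\rho\ge cF_\mu\ge c\,c_u\mu(E)$. Hence both completions admit well-defined Euclidean limit maps. A sequence representing a completion class has the same Euclidean limit regardless of which metric is used, because the Euclidean limit is a property of the sequence itself. Thus the bijection maps the fibre over $p$ for $g_\mu$ onto the fibre over $p$ for $g_\rho$. The emptiness and singleton equivalences follow, and Theorem~\ref{thm:complete-local-completion-classification} then transfers verbatim to $g_\rho$.

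The only delicate point is the localization. The two intrinsic distances must be defined with curves restricted to the same neighbourhood on which \eqref{eq:newton-comparable} holds, since otherwise a shorter curve leaving that neighbourhood could break the comparison. I would state this explicitly by working in $U_B$ shrunk inside the comparison neighbourhood. The fibre notion is already local in this sense, consistent with Definition~\ref{def:local-completion-fibre}.
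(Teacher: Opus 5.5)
Your proposal is correct and follows the paper's argument: you compare the length densities pointwise, take infima over the common family of curves in the comparison neighbourhood, and extend the bi-Lipschitz identity to the completions through preservation of Cauchy sequences and of their equivalence. You also check that fibres are preserved, using the lower bound $\rho\ge c\,c_u\mu(E)$ so that Euclidean limits are well defined for $g_\rho$; this fills in a step the paper leaves implicit, and it could equally be read off from $d_\rho\ge\sqrt c\,d_\mu$ together with Lemma~\ref{lem:euclidean-lower-control}.
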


\begin{proof}
For every absolutely continuous curve \(\gamma\),
\[
\sqrt c\,L_{g_\mu}(\gamma)
\le
L_{g_\rho}(\gamma)
\le
\sqrt C\,L_{g_\mu}(\gamma).
\]
Taking infima over the same curve family gives the distance comparison.
The identity and its inverse therefore preserve Cauchy sequences and
equivalence of Cauchy sequences, so they extend to mutually inverse
bi-Lipschitz maps of the local completions.
\end{proof}

\begin{remark}
Proposition~\ref{prop:newton-comparable-transfer} is a standard
bi-Lipschitz transfer principle and is not an additional principal theorem
of the paper. Its importance is that it shifts the extension problem from
metric geometry to an analytic recognition problem.
\end{remark}

\subsection{Why exponent measures are useful models}

The exponent-measure formalism unifies finite, countably atomic, and diffuse
positive interactions. A finite Newton sum corresponds to an atomic
measure, while an infinite or diffuse interaction is represented in the same
exponent space. The support separates the leading convex geometry from the
distribution of mass: the support function controls noncritical power order,
whereas the mass near the total-degree critical face controls the Laplace
tail and hence the completion transition.

This separation is unavailable for a completely arbitrary conformal
density. It is the principal reason to regard exponent-measure models as
candidate asymptotic normal forms, while making no claim that every positive
density admits such a form.

\subsection{Analytic routes to recognition}

Several established theories approach the recognition problem from different
sides.  Generalized analytic and generalized power-series monomialization can
produce local monomial-type forms after suitable blow-ups
\cite{MartinRolinSanz2013,MolinaPalmaSanz2024,Palma2024}.  Such normal forms
are promising sources of one-atom or finite Newton models, but the induced
metric also contains the differential of the blow-down map, and chartwise
models must still be glued.  The existing resolution results therefore do not
by themselves imply the completion classification for the original metric.

A finite positive Newton principal part with a remainder already satisfying
\(|R|\le\theta P\), \(\theta<1\), is automatically comparable to its
principal part.  This observation is useful but essentially tautological: the
hypothesis has already imposed the two-sided estimate that a recognition
theorem is meant to derive.

Multivariable regular variation provides scaling limits for functions and
measures and separates radial growth from angular data
\cite{MeerschaertScheffler1999,HultLindskog2006}.  Directional asymptotics
alone, however, do not imply a single uniform comparison on a full punctured
corner neighborhood.  A Newton-recognition theorem based on regular variation
would require compatible angular limits, uniform projective control, and
bounds on the slowly varying factors.

Finally, Harnack and volume-growth conditions are deliberately broad.  They
support accessibility and metric-boundary results for general conformal
densities \cite{Nieminen2009,KlenSuomala2013}, but do not force monomial,
log-convex, or exponent-measure structure.  They are therefore insufficient
for Newton recognition without additional asymptotic hypotheses.

The closest plausible nontrivial route is consequently a resolution or
asymptotic theorem that produces uniform Newton comparison from independently
verifiable analytic structure.  No theorem of that generality is asserted
here.

\subsection{Recognition problem}

\begin{problem}[Newton-comparability recognition]
\label{prob:newton-recognition}
Characterize natural classes of positive singular conformal densities
\(\rho\) for which there exist a compact positive exponent measure \(\mu\)
and constants \(0<c\le C<\infty\) satisfying
\[
cF_\mu\le \rho\le CF_\mu
\]
near a prescribed corner.
\end{problem}

Potential sources include generalized analytic monomialization, generalized
power-series principal parts, and multivariable regularly varying densities
with uniform projective control. The results above suggest that exponent
measures may serve as asymptotic models for such classes, but no recognition
theorem of this generality is asserted here. Establishing one would extend
the present completion classification beyond exact exponent-measure
densities while preserving its intrinsic metric content.

\end{document}